\documentclass[11pt]{article}
\usepackage{graphicx,color,amsfonts,amsmath,amssymb,enumerate}
\usepackage{url,fancyhdr,indentfirst}
\usepackage{amsthm,natbib,comment,bm}
\usepackage{bbm}
\usepackage{booktabs}
\usepackage{multirow}
\usepackage{float}
\usepackage{caption}
\usepackage{subcaption}
\usepackage{threeparttable}
\usepackage{cite}
\usepackage{siunitx}
\usepackage{mathtools}
\usepackage{pgfplots}
\usepackage{nccmath}
\usepackage{wrapfig}
\usepackage{bbold}
\usepackage{mathbbol}
\usepackage{mathrsfs}
\pgfplotsset{compat=1.18}
\usepackage{dsfont}
\usepackage{tikz-qtree}
\usepackage{xr}
\usepackage[title]{appendix}

\usepackage[a4paper,margin=1in]{geometry}
\usepackage{tikz}
\usepackage[colorlinks=true,citecolor=blue]{hyperref}
\usetikzlibrary{arrows.meta,positioning,shadows.blur}

\tikzset{
  font=\small,
    box/.style={
    rectangle,draw=black,rounded corners,thick,
    minimum width=5.5cm,
    align=center,inner sep=4pt,  blur shadow={shadow blur steps=4},

      fill=gray!5 },
  arr/.style={-Latex,thick,shorten >=2pt,shorten <=2pt},
  darr/.style={arr,dashed},
  mybox/.style={
    rectangle,
    draw=black,
    line width=1.2pt,
    minimum width=2.3cm,
    minimum height=0.9cm,
    inner sep=4pt,
    align=center,
    blur shadow={shadow blur steps=4},

      fill=gray!2
  }
}
\makeatletter

\pgfarrowsdeclare{HalfBarbUp}{HalfBarbUp}
{
  \pgfarrowsleftextend{+-\pgflinewidth}
  \pgfarrowsrightextend{2.5\pgflinewidth}
}
{
  \pgfsetdash{}{0pt}
  \pgfpathmoveto{\pgfpoint{0pt}{0pt}}
  \pgfpathlineto{\pgfpoint{-4pt}{ 2.4pt}}
  \pgfpathlineto{\pgfpoint{-3pt}{ 0pt}}
  \pgfusepathqfill
}

\pgfarrowsdeclare{HalfBarbDown}{HalfBarbDown}
{
  \pgfarrowsleftextend{+-\pgflinewidth}
  \pgfarrowsrightextend{2.5\pgflinewidth}
}
{
  \pgfsetdash{}{0pt}
  \pgfpathmoveto{\pgfpoint{0pt}{0pt}}
  \pgfpathlineto{\pgfpoint{-4pt}{-2.4pt}}
  \pgfpathlineto{\pgfpoint{-3pt}{ 0pt}}
  \pgfusepathqfill
}
\makeatother

\newcommand{\E}{\mathbb{E}}
\newcommand{\R}{\mathbb{R}}

\newcommand{\M}{\mathcal{M}}

\newcommand{\N}{\mathbb{N}}
\newcommand{\p}{\mathbb{P}}

\newcommand{\esssup}{\mathrm{ess\mbox{-}sup}}
\newcommand{\essinf}{\mathrm{ess\mbox{-}inf}}
\renewcommand{\ge}{\geqslant}
\renewcommand{\le}{\leqslant}
\renewcommand{\geq}{\geqslant}
\renewcommand{\leq}{\leqslant}
\renewcommand{\epsilon}{\varepsilon}

\theoremstyle{plain}
\newtheorem{theorem}{Theorem}
\newtheorem{assumption}{Assumption}

\newtheorem{lemma}{Lemma}
\newtheorem{proposition}{Proposition}

\theoremstyle{definition}

\theoremstyle{remark}

\theoremstyle{definition}

\renewcommand{\cite}{\citet}

\newcommand{\Lip}{\mathrm{Lip}}

\usepackage[onehalfspacing]{setspace}

\renewcommand{\P}{\mathbb{P}}

\newcommand{\sgn}{\mathrm{sgn}}
\newcommand{\betaols}{\widehat{\boldsymbol{\beta}}_{{\rm ols}}}

\title{Wasserstein Distributionally Robust Quantile Regression}

\author{
  Chunxu Zhang$^{*}$ \quad Tiantian Mao$^{*}$ \quad Ruodu Wang$^{\dagger}$ \\
  $^{*}$ Department of Statistics and Finance, School of Management, \\
  University of Science and Technology of China\\
  Hefei, Anhui, China. \\
$^{\dagger}$ Department of Statistics and Actuarial Science,\\
  University of Waterloo \\
  Waterloo, Ontario, Canada.\\
 E-mails:  \href{mailto:zhangcx0627@mail.ustc.edu.cn}{zhangcx0627@mail.ustc.edu.cn}\\   \href{mailto:tmao@ustc.edu.cn}{tmao@ustc.edu.cn}, \href{mailto:wang@uwaterloo.ca}{wang@uwaterloo.ca}.
}

\date{\today}

\begin{document}

\maketitle
\begin{abstract}
We study distributionally robust quantile regression using type-$p$ Wasserstein ambiguity sets. We derive a closed-form expression for the worst-case quantile regression loss under general $p$-Wasserstein uncertainty. 
We further give a uniqueness result showing that
for $p>1$, the check loss yields the only class of convex loss functions for which such an additive Wasserstein regularization holds. 
Our analysis also uncovers  qualitative differences between the regimes $p=1$ and $p>1$. 
When $p>1$, the slope coefficients coincide with those of the regularized formulation, while the intercept undergoes a radius-dependent adjustment; 
the value $p$ affects only this intercept correction, whereas the choice of transport norm influences both. 
Finally, we establish finite-sample out-of-sample risk guarantees of order $O(N^{-1/2})$ under mild moment conditions. Numerical experiments illustrate the theoretical findings and the practical implications of the proposed formulation.
\end{abstract}

\bigskip
\noindent\textbf{Keywords:} Distributionally robust optimization, Quantile regression, Wasserstein distance, Regularization, Out-of-sample guarantees

\section{Introduction}

Quantile regression (QR), introduced by \citet{koenker1978quantile}, generalizes classical linear regression by modeling conditional quantiles of the response variable. It has become a standard tool in econometrics, finance, operations research, and machine learning due to its ability to capture distributional heterogeneity beyond the mean. QR is widely used in applications where tail behavior is of primary interest \citep{adrian2019vulnerable, chernozhukov2005iv, engle2004caviar, machado2019quantiles}. For example, in inventory management, the classical newsvendor problem reduces to estimating a demand quantile; in financial regulation, the Value-at-Risk (VaR),  a risk metric adopted in the Basel Accords,  is defined as a quantile of portfolio losses; and in macroeconomic analysis, systemic real and financial risks are quantified using tail quantiles of GDP growth and financial indicators.

Quantile regression is typically formulated in a statistical learning framework,
where the covariate–response pair \((\mathbf X, Y)\) is assumed to follow a joint
distribution \(F\) on \(\mathbb R^d \times \mathbb R\).
For $\alpha\in (0,1)$, the population objective is defined as the minimization of the expected quantile
loss under the distribution \(F\), that is,
$$
\inf_{\boldsymbol{\beta},s}\mathbb E^F[\ell_\alpha(Y - \boldsymbol{\beta}^\top\mathbf X-s)],
$$
where  \(\ell_\alpha(u) =u (\alpha - \mathbbm{1}_{\{u<0\}} )\) is known as the \emph{check loss function}. Here $s$ denotes an intercept term; for fixed $\boldsymbol{\beta}$,
the optimal $s$ corresponds to the $\alpha$-quantile
of $Y - \boldsymbol{\beta}^\top \mathbf{X}$.
In data-driven settings, the underlying distribution $F$ is typically unknown and must be inferred    from a finite dataset $\{(\mathbf X_i, Y_i)\}_{i=1}^N$ of independent and identically distributed ($i.i.d.$) samples drawn from $F$.
Classical quantile regression minimizes the empirical risk $\E^{\widehat{F}_N}[ \ell_\alpha(Y - \boldsymbol{\beta}^\top\mathbf X-s)]$, where $\widehat{F}_N = \frac1N\sum_{i=1}^N\delta_{({\bf X}_i,Y_i)}$ denotes the empirical distribution.
Decisions based solely on the empirical distribution may suffer from poor out-of-sample performance.
Regularization techniques are commonly employed to mitigate this issue by controlling model complexity and reducing overfitting \citep{koenker2004quantile,belloni2011l1,wang2012quantile}.
Distributionally robust optimization (DRO) offers an alternative and principled approach to addressing this challenge. Rather than optimizing performance under a single empirical law, DRO seeks decisions that perform well against a family of probability distributions that are statistically close to the empirical distribution, leading   to a minimax formulation

\begin{align}
\label{eq:main0}
\inf_{\boldsymbol\beta,s}\sup_{F\in\mathbb{B}(\widehat{F}_N,\epsilon)} \mathbb{E}^F[\ell_\alpha\big(Y - \boldsymbol{\beta}^\top\mathbf X-s\big)],
\end{align}
where
$\mathbb{B}(\widehat{F}_N,\epsilon)$ is an ambiguity set consisting of all distributions within an $\epsilon$-ball around $\widehat{F}_N$, with respect to a metric to be specified.
The inner maximization problem corresponds to a worst-case expectation, and any
distribution attaining the supremum is referred to as a worst-case distribution.  The choice of the underlying probability metric plays a critical role in shaping the ambiguity set. Early DRO formulations often employ moment-based ambiguity sets that characterize uncertainty through constraints on distributional moments \citep{bertsimas2010models,delage2010distributionally,xu2018distributionally}. Divergence-based ambiguity sets, such as those induced by the Kullback–Leibler divergence \citep{KL51}, impose absolute continuity with respect to the empirical distribution, thereby restricting candidate distributions to share the same support and potentially limiting out-of-sample generalization \citep{BDDM13,HH13,S17}. In contrast, Wasserstein-based ambiguity sets allow for distributions with different supports—discrete or continuous—and have been shown to exhibit favorable measure concentration and generalization properties in data-driven optimization. Motivated by these advantages, we adopt a Wasserstein DRO framework \citep{KR58,KENS19} in this work.

For two distributions $F$ and $G$ on $\R^{d+1}$ and $p\in [1,\infty]$, the type-$p$ Wasserstein distance between $F$ and $G$ is defined as
    \begin{equation*}
{\mathrm{W}_p}(F, G) :=
\begin{cases}
\inf_{\Pi} \left\{ \big(\E^{\Pi}\left[\left\|({\bf X}_1,Y_1) - ({\bf X}_2,Y_2)\right\|^p \right] \big)^{1/p} \right\}, & \text{for } p \in [1, \infty) \\
\inf_{\Pi} \left\{\esssup^\Pi \left\|({\bf X}_1,Y_1)- ({\bf X}_2,Y_2)\right\| \right\}, & \text{for } p = \infty,
\end{cases}
\end{equation*}
where $\Pi$ is a joint distribution of $({\bf X}_1,Y_1)$ and $({\bf X}_2,Y_2)$ with marginals $F$ and $G$, respectively, and $\|\cdot\|$ is a norm on $\R^{d+1}$. The type-$p $ Wasserstein ball centered at $\widehat{F}_N$  with  radius $\varepsilon$ is then defined as
   \begin{align} \label{eq:W-ball}
       \mathbb{B}_p(\widehat{F}_N,\varepsilon) =  \{F\in \mathcal M(\R^{d+1}):{\mathrm{W}_p}(F,\widehat{F}_N)\le \varepsilon\},
    \end{align}
    where $\mathcal M(\R^{d+1}) $ is the set of all distributions supported on $\R^{d+1}.$

A growing body of literature has shown that, in certain settings,    Wasserstein DRO  admits the same reformulation as the regularized regression and yields equivalent solutions \citep{SMK15,shafieezadeh2019regularization,S98,K07,CSS11}. Notable examples include logistic regression \citep{SMK15}, least squares regression \citep{blanchet2019robust,wu2022generalization}, and HO ($c$-insensitive) regression \citep{DBKSV97,LM01,LHH05},
which provide a distributionally robust interpretation and perspective of classical regularized regression methods. However, these equivalence results are almost exclusively derived for objectives of the form $\sup_{F} \mathbb{E}^F[\ell(Z)]$, and crucially rely on the loss function $\ell$ being a Lipschitz continuous function to the power $p$ when the ambiguity set is defined by a type-$p$ Wasserstein ball.\footnote{The only exception is $\ell({\bf x}) = |\boldsymbol{\beta}^\top{\bf x}| $ or $\ell({\bf x}) =  \boldsymbol{\beta}^\top{\bf x} $ for some ${\boldsymbol{\beta}}\in\R^d$, see \cite{CP18}, \cite{wu2022generalization}. }
Quantile regression falls  outside this framework. For higher-order Wasserstein ambiguity sets ($p>1$), the existing equivalence results break down due to the mismatch between the regularity of the loss and the geometry of the ambiguity set \citep{wu2022generalization}.
As a result, it remains unclear whether distributional robustness can  yield any nontrivial or interpretable effect in quantile regression.

In this paper, we show that, perhaps surprisingly, distributionally robust quantile regression (DR-QR) does admit a regularized reformulation under general type-$p$ Wasserstein ambiguity. Beyond establishing this equivalence, we further clarify its fundamental scope.
In particular, we show that such a regularization phenomenon is specialized to QR: for $p>1$, the check loss is the only convex loss function for which the distributionally robust objective admits an equivalent regularized formulation.
This impossibility result provides a sharp characterization and highlights the exceptional role of quantile regression within the Wasserstein DRO framework.

Although the resulting regularized formulation and the
original distributionally robust problem share the same in-sample loss,
they generally admit different optimal solutions when $p>1$, which in turn
leads to distinct out-of-sample performance. We derive closed-form expressions
that characterize the exact relationship between the two optimal solutions.
Exploiting this relation, we obtain an explicit expression for the difference
between the Wasserstein out-of-sample loss and the regularized out-of-sample
loss. Our analysis shows that when the quantile estimator underestimates
(resp.\ overestimates) the target for $\alpha>1/2$ (resp.\ $\alpha<1/2$), the
Wasserstein out-of-sample loss is smaller than its regularized counterpart.
Since empirical quantile estimation is known to systematically underestimate the true
value for levels close to $1$ (see, e.g., \cite{MFE15}), Wasserstein distributionally robust
quantile regression outperforms regularized quantile regression (R-QR) for small
Wasserstein radii, a phenomenon that is further corroborated by our numerical
experiments.

To understand the source of this discrepancy, it is instructive to examine the mechanism underlying the regularized reformulation.  Although the resulting formulation resembles a regularized estimator, its origin is fundamentally different.
In fact, while the two formulations yield identical slope coefficients, they generally differ in their intercept terms.
Classical regularization controls overfitting by constraining the slope coefficients, implicitly assuming that the location of the data is reliable.
In contrast, DR-QR explicitly acknowledges uncertainty in the location of the residual distribution.
Allowing the intercept to adjust under Wasserstein perturbations separates benign location shifts from harmful structural deviations.
As a result, robustness penalizes only the slope coefficients, while location uncertainty is absorbed by the intercept.
This highlights that the regularization effect in our formulation is not imposed exogenously, but arises endogenously from distributional ambiguity.

Additionally, for Wasserstein DR-QR, we
establish finite-sample generalization guarantees that are free from the
curse of dimensionality and do not depend on the Wasserstein order $p$. Moreover,
by leveraging the explicit discrepancy between the two optimal solutions, we
derive a new generalization bound for regularized quantile regression, which,
to the best of our knowledge, has not appeared in the literature.

\paragraph{Relation with other work.} A growing literature studies DRO formulations of QR. Among them, \citet{qi2021robust} investigate a Wasserstein-DRO model under \emph{fixed design} and impose the unconventional restriction that all quantile levels share a single coefficient vector~$\boldsymbol{\beta}$, i.e.,
\[
Q_\alpha(Y\mid X)=s_\alpha+X^\top\boldsymbol{\beta}\qquad\text{for all } \alpha.
\]
They obtain closed-form solutions under a type-1 Wasserstein ball.
While elegant, their formulation fundamentally differs from the standard quantile regression model, where \emph{each} quantile level $\alpha$ has its own coefficient vector $\boldsymbol{\beta}_\alpha$, and from the widely used \emph{random design} framework. Consequently, their results do not extend to general linear QR with quantile-specific coefficients under random design—arguably the most common setting in practice.
In this paper, we consider the standard linear quantile regression model
\[
Q_\alpha(Y\mid X)=s_\alpha+X^\top \boldsymbol{\beta}_\alpha,
\]
where the covariates follow a random design and each quantile level has its own coefficient vector.
The Wasserstein-DRO version of this problem—especially under general $p$-Wasserstein balls, $p\ge1$—has not been explicitly characterized. In particular, no closed-form expressions were previously available for the worst-case quantile regression loss under random design and general~$p$. This paper fills this gap.

\paragraph{Our Contributions.} This paper makes the following advances:

\begin{itemize}
\item[1.] We show that distributionally robust quantile
regression admits an exact regularized reformulation under general type-$p$
Wasserstein ambiguity.
\item [2.] For $p>1$, we show that DR-QR and its regularized reformulation share the same slope coefficients, yet differ systematically in the intercept through an explicit radius-dependent correction term (cf.\ Eq.~(\ref{eq:relationbeta0})).
This ``location adjustment'' mechanism explains why the two formulations can exhibit different out-of-sample behavior despite having identical in-sample objectives, and yields a closed-form expression for the out-of-sample risk gap (cf.\ Eq.~(\ref{eq:diffout})).

\item[3.] We establish a sharp impossibility result: for $p>1$, the
check loss is the \emph{only} convex loss function for which such an equivalence
can hold.
This result precisely delineates the scope of regularization phenomena in
Wasserstein DRO and highlights the unique role of quantile regression.

\item[4.] We establish finite-sample bounds of order $O(1/\sqrt{N})$ for both random-design and fixed-design DR-QR under only finite-moment  assumptions, thereby improving upon standard exponential-tail conditions.

\end{itemize}
Together, these results provide a unified theoretical framework and practical tools for Wasserstein DR-QR.
\paragraph{Notation.}
 Let  $(\Omega, \mathcal A,\p)$ be an atomless probability space.  A random vector ${\bf X}$ is a measurable mapping from $\Omega$ to $\R^{d}$, $d\in\N$. Denote by $F_{\bf X}$ the distribution of  $\boldsymbol X$ under $\p$. Denote by $  \mathcal{M}(\mathbb{R}^d)$ the set of all distributions on $\R^{d}$.
For $p\ge 1$, let $L^p:=L^p(\Omega, \mathcal A,\p)$ be the set of all random variables with finite $p$-th moment and
$\M_p(\R^{d})$ be the set of all distributions on $\R^{d}$ with finite $p$-th moment in each component. For any norm $\|\cdot\|$  on $\mathbb{R}^d$, its dual norm $\|\cdot\|_*$ is defined as $\|\mathbf{y}\|_*=\sup _{\|\mathbf{x}\| \leqslant 1} \mathbf{x}^{\top} \mathbf{y}$.
Let $q$ denote the H\"{o}lder conjugate of $p$, i.e., $1/p+1/q=1$. We use $\esssup Z$ and $\essinf Z$ to represent the essential supremum and the essential infimum of the random variable $Z$, respectively, i.e., $\esssup Z=\inf\{t: F_Z(t)= 1\}$ and $\essinf Z=\sup\{t:F_Z(t)= 0\}$.
For a real number $x\in\R$, we use $x_+=\max\{x,0\}$ and $x_-=\max\{-x,0\}$; and for $m\in \N$, denote by $[m]=\{1,\ldots,m\}.$ We define $\sgn(x)=\mathbbm 1_{\{x>0\}}-\mathbbm 1_{\{x<0\}}$.  For $\mathbf x\in\R^{d+1}$, let  $\delta_{\mathbf x}$ denote the degenerate distribution at $\mathbf x$.

\section{Tractable reformulation}

\label{sec:2}

We begin by developing the analytical foundation of DR-QR under type-$p$ Wasserstein ambiguity.
Specifically, we derive a  tractable reformulation of the distributionally
robust quantile regression problem \eqref{eq:main0} with  ambiguity set being the type-$p$ Wasserstein ball defined by \eqref{eq:W-ball}.

Although the problem \eqref{eq:main0} is motivated by data-driven purposes where  the ambiguity set is centered at the empirical distribution, our main result in this section holds for any centered distribution. Let $F_0$ be a distribution on $\R^{d+1}$ an$\mathbb{B}_p(F_0,\varepsilon)$ be a $p$-Wasserstein ball centered at $F_0$ defined by \eqref{eq:W-ball}. The corresponding
\emph{Wasserstein DR-QR} problem \eqref{eq:main0} is
formulated as
\begin{align}\label{OP}
    \inf_{\boldsymbol{\beta}\in\R^d,\; s\in\R}~
    \sup_{F\in \mathbb{B}_p(F_0,\varepsilon)}
        \E^F \Big[ \ell_{\alpha}\big(Y - \boldsymbol{\beta}^\top \mathbf{X}-s\big)\Big].
\end{align}
Here and throughout the paper, $\E^F$ denotes expectation under the distribution $F$ of the random vector  $(Y,{\bf X})$.
Sometimes we have $F_0\in \mathcal M(\R^k)$ for a different integer $k\ne d+1$, and  in that case $B_p(F_0,\varepsilon)$
is a subset of $\mathcal M(\R^k)$.

 \subsection{Convex reformulation}
 \label{sec:2-1}
In this section, we analyze the DR-QR problem \eqref{OP}
and derive an exact convex reformulation.   To state the result, for  $p\geq 1$ and $\alpha\in (0,1)$,
define \begin{align}\label{eq:C_alphap}
        c_{\alpha,p} = \begin{cases}
            \max\{\alpha,1-\alpha\}, & p=1,\\
              \left( \alpha^{q}(1-\alpha) + \alpha(1-\alpha)^{q} \right)^{{1}/{q}}, & p\in(1,\infty),\\
              2\alpha(1-\alpha), &p=\infty,
        \end{cases}
    \end{align}
and let $\overline{\boldsymbol{\beta}}=(\boldsymbol{\beta},-1)\in\R^{d+1}$ for $\boldsymbol{\beta}\in\R^d$ for brevity.
\begin{theorem}
    \label{thm:quantile}
    For $p\geq 1$, $\varepsilon\ge 0$, $F_0\in \mathcal M(\R^{d+1})$, the  problem  \eqref{OP} is equivalent to the following convex program
    \begin{align}\label{prob-1}
        \min_{(\boldsymbol{\beta},\overline{s})\in\R^{d+1}}  \left\{\E^{F_0} \left[ \ell_{\alpha}(Y - \boldsymbol{\beta}^\top \mathbf{X}-\overline{s}) \right] + c_{\alpha,p}\varepsilon\|\overline{\boldsymbol{\beta}}\|_* \right\},
    \end{align}
 in the sense that they share the same optimal value and that if $(\boldsymbol{\beta}^*,\overline{s}^*)$  solves \eqref{prob-1},   then $(\boldsymbol{\beta}^*,s^* )$  is the optimal solution to the problem \eqref{OP}, where
    \begin{align} \label{eq:relationbeta0}
       s^*  =\begin{cases}
            \overline{s}^*, & p=1,\\
            \overline{s}^*+\frac{\varepsilon}{q} \left(\alpha^{q}-(1-\alpha)^{q}\right) c_{\alpha,p}^{1-q}\|\overline{\boldsymbol{\beta}^*}\|_*, & p\in(1,\infty].
         \end{cases}
    \end{align}

\end{theorem}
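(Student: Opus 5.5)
Fix $(\boldsymbol{\beta},s)$ and write $Z=Y-\boldsymbol{\beta}^\top\mathbf X$, so that $Z-s=-\overline{\boldsymbol\beta}^\top(\mathbf X,Y)$. The plan is to reduce the inner supremum in \eqref{OP} to a one-dimensional worst-case problem over the law of $Z$, solve that problem in closed form, and then optimize over $s$.

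\textbf{Step 1 (projection).} Moving $(\mathbf X,Y)$ by a vector $\Delta$ changes $Z$ by $-\overline{\boldsymbol\beta}^\top\Delta$. This change is at most $\|\overline{\boldsymbol\beta}\|_*\|\Delta\|$ in absolute value, and equality is attained along a norming direction of the dual norm. Consequently, the image of $\mathbb B_p(F_0,\varepsilon)$ under the linear map $(\mathbf x,y)\mapsto y-\boldsymbol\beta^\top\mathbf x$ is exactly the one-dimensional ball $\mathbb B_p(G_0,\varepsilon\|\overline{\boldsymbol\beta}\|_*)$, where $G_0$ is the law of $Z$ under $F_0$. To prove "$\supseteq$", transport each point along the norming direction, using an atomless space to realize arbitrary couplings. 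The inner supremum therefore equals
\[
\sup\{\E[\ell_\alpha(Z'-s)] : \|Z'-Z\|_{L^p}\le \delta\},\qquad \delta:=\varepsilon\|\overline{\boldsymbol\beta}\|_*,
\]
where $Z\sim G_0$. This reduction is routine.

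\textbf{Step 2 (one-dimensional worst case).} Write $V=Z'-Z$. Since $\ell_\alpha$ is piecewise linear,
\[
\ell_\alpha(Z+V-s)\le \ell_\alpha(Z-s)+\alpha V_+ + (1-\alpha)V_-,
\]
with equality when $V$ does not cross the kink. The fact that $V$ may cross the kink, from either side, is what makes the $s$-adjustment appear. I would instead use the exact identity $\ell_\alpha(u)=\alpha u+(-u)_+$. Then
\[
\E\,\ell_\alpha(Z'-s)=\alpha(\E Z-s)+\alpha\E V+\E(s-Z-V)_+ .
\]
For $p>1$, I would proceed by Lagrangian duality, or by a direct upper bound followed by a construction. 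The upper bound uses the slopes of $\ell_\alpha$: $\E[\ell_\alpha(Z'-s)]-\E[\ell_\alpha(Z-s)]\le \E[gV]$ for a measurable selection $g\in\{\alpha,-(1-\alpha)\}$ of the slope at $Z'-s$. Hölder then gives $\le \|g\|_{L^q}\,\delta$. The construction guesses the worst case: shift $Z$ up by $a$ on an event of probability $1-\alpha$ and down by $b$ on the complementary event of probability $\alpha$. Because $\ell_\alpha$ is linear away from the kink and the optimal $s$ is a quantile, taking $s$ optimal makes $\P(g=\alpha)=1-\alpha$ at the optimum. The slope-weighted $L^q$ norm is then exactly $(\alpha^q(1-\alpha)+(1-\alpha)^q\alpha)^{1/q}=c_{\alpha,p}$, with the $p=1$ and $p=\infty$ cases read off as limits.

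The key point is that for fixed $s$ the worst case is \emph{not} symmetric; it equals $\min$ over an auxiliary shift. So I would prove
\[
\inf_s\sup_{Z'}\E\,\ell_\alpha(Z'-s)=\inf_{\overline s}\E\,\ell_\alpha(Z-\overline s)+c_{\alpha,p}\delta ,
\]
by showing the worst-case value at $s$ equals $\inf_t\{\E\ell_\alpha(Z-t)+\phi(s-t)\}$ for an explicit convex function $\phi$ with minimum $c_{\alpha,p}\delta$. The function $\phi$ is obtained by optimizing the magnitudes $a,b$ under the constraint $(1-\alpha)a^p+\alpha b^p\le\delta^p$. Minimizing $\phi$ then gives $s-\overline s=\frac{\delta}{q}(\alpha^q-(1-\alpha)^q)c_{\alpha,p}^{1-q}$, which is the shift in \eqref{eq:relationbeta0}. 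For $p=1$, mass concentration makes $\phi$ constant near its minimizer, so $s^*=\overline s^*$ is optimal.

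\textbf{Step 3.} Combining Steps 1 and 2 and taking the infimum over $\boldsymbol\beta$ gives \eqref{prob-1}, which is convex because it is a convex loss plus a norm term. The relation between the solutions follows from the minimizer of $\phi$.

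The main obstacle is Step 2: proving the exact worst-case value for general $s$, not only at the optimal one. I would try to sidestep this by proving the min–max value directly. The upper bound comes from choosing $s=\overline s^*+{}$shift and applying the Hölder bound, with the slope distribution controlled by the quantile property of $\overline s^*$. The lower bound comes from a minimax (concavity in the law, convexity in $s$) argument combined with the two-sided shift construction.
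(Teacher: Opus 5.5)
Your projection step and your lower bound are sound. Take $A$ to be an upper-tail event of $Z$ with probability $1-\alpha$, and $a,b$ the optimal magnitudes. Then every point of $[\overline s^*-b,\overline s^*+a]$ is an $\alpha$-quantile of $Z^*=Z+a\mathbbm{1}_A-b\mathbbm{1}_{A^c}$. Hence $\sup_{Z'}\E\,\ell_\alpha(Z'-s)\ge\inf_{s'}\E\,\ell_\alpha(Z^*-s')=\min_t\E\,\ell_\alpha(Z-t)+c_{\alpha,p}\delta$ for every $s$; weak duality suffices and no minimax theorem is needed.

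The gap is the matching upper bound at $s^*$, which is the real content of the theorem for $p>1$. Your H\"older estimate compares $\ell_\alpha(Z'-s)$ with $\ell_\alpha(Z-s)$ at the same $s$. The quantity $\pi:=\P(g=\alpha)$ is chosen by the adversary through $Z'$; it is not fixed by the quantile property of $\overline s^*$. Uniformly over adversaries, the estimate only gives $\E\,\ell_\alpha(Z-s^*)+\max\{\alpha,1-\alpha\}\delta$. This exceeds $\min_t\E\,\ell_\alpha(Z-t)+c_{\alpha,p}\delta$ whenever $p>1$ and $\alpha\neq1/2$. The estimate is loose even at the true worst case, because the optimal perturbation crosses the kink: for $G_0=\delta_0$ and $\alpha>1/2$ it returns $(1-\alpha)u^*+c_{\alpha,p}\delta$, where $u^*:=s^*-\overline s^*>0$. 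Saying that taking $s$ optimal makes $\P(g=\alpha)=1-\alpha$ presupposes the saddle point you are trying to prove.

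Likewise, $\phi$ cannot be obtained by optimizing $a,b$ under the fixed-probability budget $(1-\alpha)a^p+\alpha b^p\le\delta^p$. A primal optimization only yields lower bounds on the supremum. Moreover, at non-optimal $s$ the worst-case upward event does not have probability $1-\alpha$: for $G_0=\delta_0$ and large $s$, the adversary moves all mass down. The representation $\inf_t\{\E\,\ell_\alpha(Z-t)+\phi(s-t)\}$ is true, but with $\phi(u)=\lambda\delta^p+k_1\lambda^{1-q}$ for $u=k_2\lambda^{1-q}$. That is, it is the paper's dual formula after substituting $t=s-k_2\lambda^{1-q}$.

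The missing idea is to apply the subgradient inequality between $Z'-s^*$ and $Z-\overline s^*$, not $Z-s^*$. Let $g\in\{\alpha,-(1-\alpha)\}$ be a slope of $\ell_\alpha$ at $Z'-s^*$. Then $\ell_\alpha(Z'-s^*)-\ell_\alpha(Z-\overline s^*)\le g(V-u^*)$ and $\E g=\pi-(1-\alpha)$, hence
\[
\E\,\ell_\alpha(Z'-s^*)-\E\,\ell_\alpha(Z-\overline s^*)\le h(\pi):=\delta\bigl(\alpha^q\pi+(1-\alpha)^q(1-\pi)\bigr)^{1/q}-u^*\bigl(\pi-(1-\alpha)\bigr).
\]
The function $h$ is concave on $[0,1]$, and the condition $h'(1-\alpha)=0$ is exactly $u^*=\frac{\delta}{q}(\alpha^q-(1-\alpha)^q)c_{\alpha,p}^{1-q}$. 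So $\max_\pi h=h(1-\alpha)=c_{\alpha,p}\delta$ whatever the adversary does, and this is where the intercept shift actually comes from. The endpoint cases follow from the same argument, so no limiting argument is needed:
\begin{itemize}
\item for $p=1$, use $\|g\|_{L^\infty}\le\max\{\alpha,1-\alpha\}$ and $u^*=0$;
\item for $p=\infty$, use $\|g\|_{L^1}$ and $u^*=(2\alpha-1)\delta$, which makes $h$ constant.
\end{itemize}

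With this repair your route becomes a correct and more elementary alternative to the paper's proof. The paper instead invokes Wasserstein strong duality and computes $\sup_y\{\ell_\alpha(y)-\lambda|y-z|^p\}=\ell_\alpha(z+k_2\lambda^{1-q})+k_1\lambda^{1-q}$. It then removes the $\lambda$-dependent shift by translation invariance of $\inf_s$. The paper's route has the advantage of giving the exact worst-case value at every $s$, not only at $s^*$.
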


Theorem~\ref{thm:quantile} reveals that distributional robustness naturally induces a regularization term of the form
 $c_{\alpha,p} \|\overline{\boldsymbol{\beta}}\|_*$.
The equivalence between problems \eqref{OP} and \eqref{prob-1} may be surprising, since for
$p>1$ and $\alpha\neq 1/2$ it does \emph{not} hold that
$$\sup_{F\in \mathbb{B}_p(F_0,\varepsilon)}
        \E^F \left[ \ell_{\alpha}(Y - \boldsymbol{\beta}^\top \mathbf{X}-s) \right] = \E^{F_0} \left[ \ell_{\alpha}(Y - \boldsymbol{\beta}^\top \mathbf{X}- s) \right] + c_{\alpha,p}\varepsilon\|\overline{\boldsymbol{\beta}}\|_*. $$
Instead, the regularization effect emerges only after optimizing over the intercept.
Specifically, for any  $\boldsymbol{\beta}\in\R^d$,
\begin{align}\label{eq:eqi-key}
    \inf_{s}\sup_{F\in \mathbb{B}_p(F_0,\varepsilon)}
        \E^F \left[ \ell_{\alpha}(Y - \boldsymbol{\beta}^\top \mathbf{X}-s) \right] = \inf_{s}  \E^{F_0} \left[ \ell_{\alpha}(Y - \boldsymbol{\beta}^\top \mathbf{X}- s) \right] + c_{\alpha,p}\varepsilon\|\overline{\boldsymbol{\beta}}\|_* .
\end{align}
Note that $\E^F \left[ \ell_{\alpha}(Y - \boldsymbol{\beta}^\top \mathbf{X}-s) \right]$ is linear in $F$ and convex in $s$, and the set $\mathbb{B}_p(F_0,\varepsilon)$ is a convex set.
Therefore, standard minimax arguments apply, and the order of
$\inf_s$ and $\sup_F$ of the left-hand side of  \eqref{eq:eqi-key} can be interchanged.
Thus  \eqref{eq:eqi-key} is actually the  regularization for the location-adjusted objective $\inf_s\E^F[\ell_\alpha(\cdot-s)]$:
\begin{align*}
    \sup_{F\in \mathbb{B}_p(F_0,\varepsilon)}
    \inf_{s\in\R}  \E^F \left[ \ell_{\alpha}(Y - \boldsymbol{\beta}^\top \mathbf{X}-s) \right] = \inf_{s\in\R}  \E^{F_0} \left[ \ell_{\alpha}(Y - \boldsymbol{\beta}^\top \mathbf{X}-s) \right]+ c_{\alpha,p}\varepsilon\|\overline{\boldsymbol{\beta}}\|_*.
\end{align*}
It is worth emphasizing that the above regularization identity is different from existing equivalence results in the Wasserstein DRO literature.
  Most existing Wasserstein DRO regularization results are of the form
\begin{equation*}
    \sup_{F\in\mathbb B_p(F_0,\varepsilon)} \E^F[\ell(Z)]
    \approx \E^{F_0}[\ell(Z)] + \varepsilon \cdot \mathrm{Lip}_p(\ell)\footnote{The exact formulation is $  \sup_{F\in\mathbb B_p(F_0,\varepsilon)} \E^{F}[\ell(Z)]
    = \left(\left(\E^{F_0}[\ell(Z)] \right)^{1/p}+ \varepsilon \cdot \mathrm{Lip}_p(\ell)\right)^p$.},
\end{equation*}
and rely critically on the assumption that the loss function is
Lipschitz of order $p$, with the Lipschitz exponent matching the order
of the Wasserstein ball. By contrast, the objective considered here involves an infimum over a location parameter,
\[
\rho^F(X):=\inf_{s\in\R}\E^F[\ell(X-s)],
\]
for which no comparable regularization results appear to be available in the existing literature. The key reason why a closed-form regularization still emerges in our setting lies in the location adjustment $s$.
While the check loss is globally Lipschitz, this alone is insufficient for the existing Wasserstein regularization theory when $p>1$.
Optimizing over $s$ absorbs pure location shifts in the response variable and thereby makes the exact additive regularization possible. The above observation naturally raises the question of whether similar regularization identities may hold for broader classes of loss functions beyond the check loss.
Perhaps surprisingly, the answer is essentially negative.
In fact, for $p>1$, the check loss turns out to be the \emph{only} convex loss function for which a worst-case optimized expectation of the form
\eqref{eq:cond-rho}
admits a Wasserstein regularization.
The following theorem formalizes this impossibility result by providing a complete characterization.
\begin{theorem} \label{thm:equivalence}

    For $p>1$, let $\ell:\R\to\R$ be a convex function and $\rho:\mathcal M_p(\R)\to \R$ be given by $ \rho^G(X):=\inf_{s\in\R}  \E^G \left[ \ell(X-s) \right]$.  Then, there exists $c=c_{\ell,p}>0$ such that
    \begin{equation}\label{eq:cond-rho}
        \sup_{G\in \mathbb{B}_p(G_0,\varepsilon)}  \rho^G(X)=\rho^{G_0}(X) + c\varepsilon
    \end{equation}
    holds for any $G_0\in\mathcal M_p(\R)$ and $\varepsilon\ge0$ if and only if
    $\ell(x) =  a\,\ell_\alpha(x-b) + d $
    for some $\alpha\in(0,1)$, $a>0$ and $b,\,d\in\R$,  where \(\ell_\alpha\) is the check loss function.
\end{theorem}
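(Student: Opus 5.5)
The plan is to prove the two directions separately.

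\emph{Sufficiency.} Take $\ell(x)=a\,\ell_\alpha(x-b)+d$. The shift $b$ is absorbed by the infimum over $s$, and the constants $a,d$ factor out. Hence $\rho^G(X)=a\inf_s\E^G[\ell_\alpha(X-s)]+d$. Applying Theorem~\ref{thm:quantile} in dimension one (no covariates, so $\|\overline{\boldsymbol\beta}\|_*=1$), combined with the minimax interchange discussed after \eqref{eq:eqi-key}, gives
\[
\sup_{G\in\mathbb B_p(G_0,\varepsilon)}\rho^G(X)=\rho^{G_0}(X)+a\,c_{\alpha,p}\varepsilon .
\]
So \eqref{eq:cond-rho} holds with $c=a c_{\alpha,p}>0$.

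\emph{Necessity.} The idea is to test \eqref{eq:cond-rho} on simple centers $G_0$ for which both sides can be computed, and extract functional equations for $\ell$.

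First take $G_0=\delta_0$. Then $\rho^{\delta_0}=\inf_s\ell(-s)=:m$. This must be finite, so $\ell$ is bounded below and attains or approaches its infimum. Next note that $\rho^G\ge m$ always, with equality iff $G$ is concentrated on a translate of the argmin set of $\ell$. Since the right side $m+c\varepsilon$ grows strictly, $\ell$ cannot be constant. If $\ell$ does not attain its minimum, being convex and bounded below it tends monotonically to $m$ at one end, and I would rule this out with two-point centers below. So $\ell$ attains its minimum on an interval $[b_1,b_2]$; after shifting and subtracting we may assume $\min\ell=0$ at $0$.

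The main computation uses $G_0=\delta_0$ with general $p$. The sup over $\mathbb B_p(\delta_0,\varepsilon)$ is a sup over random variables $X$ with $\E|X|^p\le\varepsilon^p$ of $\inf_s\E\ell(X-s)$. Restricting to two-point laws $X\in\{-u,v\}$ with probabilities $(1-\theta,\theta)$ should give, in the limit, a lower bound governed by the asymptotic slopes of $\ell$. Conversely, write $\ell_\pm'$ for the slopes at $\pm\infty$. Since $\ell\le \ell_+'(\cdot)_++\ell_-'(\cdot)_-$ when $\ell(0)=0$ and the right side is a multiple of a check loss after normalization, Theorem~\ref{thm:quantile} bounds $\sup\rho^G$ from above by $c_{\alpha,p}$-type quantities built from these slopes. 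Homogeneity forces the growth in $\varepsilon$ to be exactly linear for all $\varepsilon$. Since $\ell$ is strictly below the piecewise-linear majorant near the kink, this should force $c$ to equal the check-loss constant for $\alpha=\ell_+'/(\ell_+'+|\ell_-'|)$ and scale $a=\ell_+'+|\ell_-'|$. In particular both slopes must be finite: if $\ell$ were superlinear, spreading mass far out would make the sup infinite or nonlinear in $\varepsilon$.

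\emph{Main obstacle.} The key step is showing that $\ell$ equals its piecewise-linear majorant $g(x)=a\ell_\alpha(x-b)+d$ everywhere, and not merely at the asymptotic level. The plan here is to compare the two sides of \eqref{eq:cond-rho} at a fixed $\varepsilon$ but with a scaled center $G_0$ (for example a point mass, or a two-point law with small spread $t$), and to use that the identity must hold for all $G_0$ and $\varepsilon$. For $\ell\le g$ we have $\rho^{G}_\ell\le\rho^G_g$ for every $G$, with
\[
\sup_{G}\rho_\ell^G=\rho^{G_0}_\ell+c\varepsilon, \qquad \sup_G\rho_g^G=\rho_g^{G_0}+c\varepsilon .
\]
Taking $G_0$ to be a two-point law whose support straddles a point where $\ell<g$ and letting $\varepsilon\to\infty$, the worst-case laws for $\ell$ push mass to regions where $\ell$ and $g$ agree asymptotically. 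This makes $\sup_G\rho_\ell^G-\sup_G\rho_g^G\to0$, which forces $\rho_\ell^{G_0}=\rho_g^{G_0}$. Choosing such $G_0$ with varied supports then yields $\ell=g$ pointwise. The delicate part is justifying the asymptotic agreement uniformly, and I expect the core technical work to sit there. Controlling the optimal $s$, via the interchange of $\inf$ and $\sup$, is the tool I would try first.
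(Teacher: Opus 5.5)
Your sufficiency direction is fine. The preliminary necessity steps can also be made rigorous along the lines you indicate. With $\ell(0)=\min\ell=0$ and $\ell_\varepsilon(x):=\ell(\varepsilon x)/\varepsilon$, the hypothesis at $G_0=\delta_0$ gives $\sup_{G\in\mathbb{B}_p(\delta_0,1)}\rho^G_{\ell_\varepsilon}=c$ for every $\varepsilon>0$, and $\ell_\varepsilon\uparrow g$ as $\varepsilon\to\infty$. Evaluating on the two-point worst case of $g$ then gives finiteness of both asymptotic slopes and $c=a\,c_{\alpha,p}$; this, not ``$\ell$ is strictly below $g$ near the kink'', is what pins down $c$.

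The genuine gap is the identification step $\ell=g$, where your proposed mechanism runs backwards. Since $\ell$ is convex with $\ell(0)=0$ and $g$ is built from its asymptotic slopes, $g-\ell$ is non-decreasing on $[0,\infty)$ and non-increasing on $(-\infty,0]$. So the gap vanishes at the kink and is largest in the tails, with a limit in $(0,\infty]$ on any side where $\ell\neq g$. Far out, $\ell$ and $g$ agree only in ratio, not additively. The worst-case laws at large $\varepsilon$ (two atoms of order $\varepsilon$ apart, cf.\ Proposition~\ref{prop:worst-case-p}) sit exactly where the additive gap is largest. For instance, take $p=2$, $\ell(x)=\sqrt{1+x^2}-1$, $g(x)=|x|$ and $G_0=\delta_0$. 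Jensen's inequality and the symmetric law on $\pm\varepsilon$ give $\sup_G\rho^G_\ell=\sqrt{1+\varepsilon^2}-1$, while $\sup_G\rho^G_g=\varepsilon$, so the difference of suprema tends to $1$, not $0$. Moreover, under the hypothesis $\sup_G\rho^G_g-\sup_G\rho^G_\ell=\rho^{G_0}_g-\rho^{G_0}_\ell$ for every $\varepsilon$, so your claimed limit is just a restatement of the conclusion. At a two-point center straddling a point where $\ell<g$, a positive constant difference is fully compatible with the hypothesis, so that choice of test center cannot produce a contradiction.

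The salvageable form of your idea is the reverse. Test at $G_0=\delta_0$, where $\rho^{\delta_0}_\ell=\rho^{\delta_0}_g=0$, so the hypothesis forces $\sup\rho_\ell=\sup\rho_g$ for all $\varepsilon$. Then prove the strict inequality $\sup\rho_\ell<\sup\rho_g$ whenever $\ell\neq g$. This requires control over where (near-)worst-case laws and their optimal locations lie. That is delicate: for the check loss the optimal location is not unique and can sit at an endpoint of the support, leaving no mass on one side of it.

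The paper obtains this control by a route your plan lacks:
\begin{itemize}
\item a sub-$p$ growth bound (Lemma~\ref{lem:ell}(iii)) plus weak compactness, giving an exact maximizer $X^*_\varepsilon$ at $\delta_0$;
\item the identity re-centered at the law of $\lambda X^*_\varepsilon$ with radius $(1-\lambda)\varepsilon$, together with convexity of $\rho$, giving $\rho(\lambda X^*_\varepsilon)=\lambda\varepsilon$ for all $\lambda>0$;
\item equality in $\ell_i(\lambda x)\le\lambda\ell_i(x)$ on the support, rescaled, giving linearity;
\item in the endpoint case, a minorant built from the one-sided derivative at the kink (not the asymptotic slope), combined with the first-order condition, the explicit two-point worst case of Proposition~\ref{prop:worst-case-p}, and uniqueness of the minimax location.
\end{itemize}
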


Using \citet[Theorem 5]{mao2022model} on the projection of Wasserstein balls, we have for any $\boldsymbol{\beta}\in\R^d$ and risk functional $\rho$, the  maximization over the $(d+1)$-dimensional Wasserstein ball reduces to a one-dimensional problem:
\begin{align*}
    \sup_{F\in \mathbb{B}_p(F_0,\varepsilon)}
        \rho^F (Y - \boldsymbol{\beta}^\top \mathbf{X}) =   \sup_{G\in \mathbb{B}_p(G_0,\varepsilon\|\overline{\boldsymbol{\beta}}\|_*)}
        \rho^G (Z) ,
\end{align*}
where $G_0$ is the distribution of $Y_0 - \boldsymbol{\beta}^\top \mathbf{X}_0$ with $(Y_0 ,  \mathbf{X}_0)\sim F_0$, and $ \mathbb{B}_p(G_0,\varepsilon\|\overline{\boldsymbol{\beta}}\|_*)$ is a one-dimensional Wasserstein ball centered at $G_0$ with radius $\varepsilon\|\overline{\boldsymbol{\beta}}\|_*$.
  Therefore, the  regularization  \eqref{eq:cond-rho}
holds for all $G_0\in\mathcal M_p(\R)$ and $\varepsilon\ge 0$
if and only if
\begin{align*}
    \sup_{F\in \mathbb{B}_p(F_0,\varepsilon)}
        \rho^F (Y - \boldsymbol{\beta}^\top \mathbf{X}) =   \rho^{F_0}(Y - \boldsymbol{\beta}^\top \mathbf{X}) + c\varepsilon  \|\overline{\boldsymbol{\beta}}\|_*
\end{align*}
holds for all $\boldsymbol{\beta}\in\R^d$, $F_0\in\mathcal M_p(\R^{d+1})$, and $\varepsilon\ge0$.
Using a standard minimax argument, this is again equivalent to the equality
\begin{align*}
    \inf_{{\boldsymbol{\beta}\in \mathcal D},\,s\in\R}~\sup_{F\in \mathbb{B}_p(F_0,\varepsilon)}
        \E^F [\ell(Y - \boldsymbol{\beta}^\top \mathbf{X}-s)] =      \inf_{{\boldsymbol{\beta}\in \mathcal D},\,s\in\R}\left\{\E^{F_0} [\ell(Y - \boldsymbol{\beta}^\top \mathbf{X}-s)] + c\varepsilon  \|\overline{\boldsymbol{\beta}}\|_*\right\}
\end{align*}
 for all $\mathcal D\subseteq\R^d$, $F_0\in\mathcal M_p(\R^{d+1})$ and $\varepsilon\ge0$. Consequently, Theorem~\ref{thm:equivalence} implies that,
for $p>1$, quantile regression is essentially the unique convex regression model
within the class of location-adjusted objectives
that admits an additive Wasserstein regularization.

\subsection{Worst-case distributions} \label{sec:worst-case}
To further understand the regularization effect induced by Wasserstein robustness, in this section, we characterize the worst-case distributions that solve the inner maximization problem of  \eqref{OP}, that is,
\begin{equation}
    \label{sup}
    \sup_{F\in \mathbb{B}_p(F_0,\varepsilon)}
    \E^F \left[ \ell_{\alpha}(Y - \boldsymbol{\beta}^\top \mathbf{X}-s) \right].
\end{equation}
To state the result, let $\mathbf{v}_{\boldsymbol{\beta}}\in\mathbb{R}^{d+1}$ satisfy
\[
\|\mathbf{v}_{\boldsymbol{\beta}}\|= 1
\quad\text{and}\quad
(-\boldsymbol{\beta},1)^\top \mathbf{v}_{\boldsymbol{\beta}}=\|(-\boldsymbol{\beta},1)\|_*
.
\]
For $p=1$, the structure of a worst-case distribution depends on whether $\alpha$ is above or below $1/2$. We first consider the case $\alpha>1/2$. 

\begin{proposition}[Worst-case distribution for $p=1$]
    \label{prop:worst-case-p1}
    For $p=1$, $\alpha\geq1/2$, $(\boldsymbol{\beta},s)\in \R^{d+1}$, and $\left(\mathbf{X}_0,Y_0\right)\sim F_0$, if $\P\left(Y_0 - \boldsymbol{\beta}^\top \mathbf{X}_0 - s \ge 0 \right)>0$, then one worst-case distribution to problem \eqref{sup}   is   the distribution of the random vector
    \begin{align*}
        \left(\mathbf{X}^*,Y^*\right) = \left(\mathbf{X}_0,Y_0\right) + \frac{\varepsilon\mathbbm{1}_{\{Y_0 - \boldsymbol{\beta}^\top \mathbf{X}_0 - s \ge 0 \}}}{\P\left(Y_0 - \boldsymbol{\beta}^\top \mathbf{X}_0 - s \ge 0 \right)}\mathbf{v}_{\boldsymbol{\beta}},
    \end{align*}
   and if $ \P\left(Y_0 - \boldsymbol{\beta}^\top \mathbf{X}_0 - s \ge 0 \right)=0$, then the supremum of  problem \eqref{sup} is not attainable.
\end{proposition}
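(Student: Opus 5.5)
We prove the proposition with an upper bound valid for every $F$ in the ball, then show that the constructed distribution attains it.

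\emph{Upper bound.} Write $Z=Y-\boldsymbol{\beta}^\top\mathbf X-s$ and $Z_0=Y_0-\boldsymbol{\beta}^\top\mathbf X_0-s$. For any coupling of $(\mathbf X,Y)\sim F$ with $(\mathbf X_0,Y_0)$, Hölder's inequality for dual norms gives
\[
|Z-Z_0|=\bigl|(-\boldsymbol{\beta},1)^\top\bigl((\mathbf X,Y)-(\mathbf X_0,Y_0)\bigr)\bigr|\le \|(-\boldsymbol{\beta},1)\|_*\,\|(\mathbf X,Y)-(\mathbf X_0,Y_0)\|.
\]
The check loss is Lipschitz with constant $\max\{\alpha,1-\alpha\}=\alpha$ when $\alpha\ge 1/2$. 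Hence
\[
\E^F[\ell_\alpha(Z)]\le \E[\ell_\alpha(Z_0)]+\alpha\|(-\boldsymbol{\beta},1)\|_*\,\E\|(\mathbf X,Y)-(\mathbf X_0,Y_0)\|.
\]
Taking the infimum over couplings bounds the right-hand side by $\E[\ell_\alpha(Z_0)]+\alpha\varepsilon\|\overline{\boldsymbol{\beta}}\|_*$, since $\|(-\boldsymbol{\beta},1)\|_*=\|\overline{\boldsymbol{\beta}}\|_*$. This recovers the $p=1$ value in Theorem~\ref{thm:quantile}, where $c_{\alpha,1}=\alpha$ and the supremum equals this bound even before optimizing over $s$. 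We can also simply note that the bound is attained or approached by the constructions below.

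\emph{Attainment when $\pi:=\P(Z_0\ge0)>0$.} Under the coupling $(\mathbf X^*,Y^*)$ with $(\mathbf X_0,Y_0)$, the transport cost is
\[
\E\bigl[\varepsilon\mathbbm 1_{\{Z_0\ge0\}}\|\mathbf v_{\boldsymbol{\beta}}\|/\pi\bigr]=\varepsilon,
\]
so the law of $(\mathbf X^*,Y^*)$ lies in $\mathbb B_1(F_0,\varepsilon)$. Moreover,
\[
Z^*=Z_0+\frac{\varepsilon\mathbbm 1_{\{Z_0\ge0\}}}{\pi}\,\|\overline{\boldsymbol{\beta}}\|_*.
\]
On $\{Z_0\ge 0\}$ the shift is nonnegative, so $Z^*$ stays in the region where $\ell_\alpha$ has slope $\alpha$. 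The loss there therefore increases by exactly $\alpha\varepsilon\|\overline{\boldsymbol{\beta}}\|_*/\pi$, and outside this event nothing changes. Taking expectations gives exactly $\E[\ell_\alpha(Z_0)]+\alpha\varepsilon\|\overline{\boldsymbol{\beta}}\|_*$, which matches the upper bound.

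\emph{Non-attainment when $\pi=0$.} Here $Z_0<0$ almost surely. We first show the supremum still equals the upper bound. Choose $\delta>0$ small and move only the mass on $A_\delta=\{Z_0\ge -\delta\}$, whose probability $\pi_\delta$ is positive unless $\operatorname{ess\,sup}Z_0<-\delta$; in that case use a small-probability subset of $\{Z_0\ge \text{quantile}\}$. Shift this mass by $(t/\pi_\delta)\mathbf v_{\boldsymbol{\beta}}$ with a large amount $t\approx\varepsilon$. The loss then gains $\alpha\varepsilon\|\overline{\boldsymbol{\beta}}\|_*$, minus an error of at most
\[
(\alpha+(1-\alpha))\cdot\bigl(\text{distance needed to cross }0\bigr)\cdot\P(\cdot)\to0
\]
as the chosen event's probability tends to $0$. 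The precise statement: for a set $B$ with probability $\eta\to0$ on which $Z_0\ge -K$, shift by $\varepsilon/\eta$. The gain is at least
\[
\alpha\varepsilon\|\overline{\boldsymbol{\beta}}\|_*-\eta K\to\alpha\varepsilon\|\overline{\boldsymbol{\beta}}\|_*.
\]

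Next, suppose some $F$ attains the bound. Then every inequality above is an equality for an optimal coupling, which has finite cost by standard existence of optimal couplings. In particular $\ell_\alpha(Z)-\ell_\alpha(Z_0)=\alpha|Z-Z_0|$ a.s. wherever $Z\ne Z_0$, and the total transport equals $\varepsilon>0$. Slope $\alpha$ along the segment from $Z_0<0$ requires the segment to lie in $[0,\infty)$, which is impossible since $Z_0<0$. Hence the coupling moves nothing, contradicting a cost of $\varepsilon>0$. More carefully, for $Z_0<0$ one has the strict inequality $\ell_\alpha(Z)-\ell_\alpha(Z_0)<\alpha|Z-Z_0|$ whenever $Z\ne Z_0$, because the increment is at most $\alpha(Z-0)_+ +(1-\alpha)\min(|Z_0|,\cdot)$ with strictly smaller slope on the negative part. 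This gives strict inequality in expectation, and the case $\varepsilon=0$ is excluded or trivial.

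The main obstacle is this non-attainment step, specifically making the strict inequality rigorous. The plan is to use existence of an optimal coupling for $\mathrm W_1$, which holds when the cost is finite, and to quantify the slack pointwise as
\[
\alpha|Z-Z_0|-\bigl(\ell_\alpha(Z)-\ell_\alpha(Z_0)\bigr)\ge \min\{|Z-Z_0|,|Z_0|\}\,\bigl(\alpha-(1-\alpha)\bigr)_{\text{or }}\alpha\text{-gap}.
\]
This slack is positive on the set where the point moves. When $\alpha=1/2$ the slopes coincide in absolute value, but the loss decreases initially on the negative side when moving right, so the slack is positive there too.
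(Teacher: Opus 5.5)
\textbf{The gap: the $\alpha=1/2$ case.} Your last sentence, on $\alpha=1/2$, is wrong, and it cannot be repaired, because the non-attainment claim itself is false there.

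\begin{itemize}
\item From $Z_0<0$, a move to the \emph{left} ($Z\le Z_0$) changes the loss by $(1-\alpha)(Z_0-Z)$. So your slack in that region is $(2\alpha-1)|Z-Z_0|$, which vanishes at $\alpha=1/2$. You only checked rightward moves.
\item Concretely, take $\alpha=1/2$, $\varepsilon>0$ and $\P(Z_0\ge 0)=0$. The law of $(\mathbf X_0,Y_0)-\varepsilon\mathbf v_{\boldsymbol\beta}$ lies in $\mathbb B_1(F_0,\varepsilon)$ and gives $Z^*=Z_0-\varepsilon\|\overline{\boldsymbol\beta}\|_*<0$. Hence $\E\,\ell_{1/2}(Z^*)=\E\,\ell_{1/2}(Z_0)+\tfrac12\varepsilon\|\overline{\boldsymbol\beta}\|_*$, so the supremum is attained. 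This is exactly what the paper's own remark after the proposition says for $\alpha=1/2$.
\item Likewise, your claim that ``slope $\alpha$ along the segment requires the segment to lie in $[0,\infty)$'' holds only for $\alpha>1/2$.
\item The paper's proof also silently needs $\alpha>1/2$, since it uses $(1-\alpha)|Z-Z_0|<\alpha|Z-Z_0|$.
\end{itemize}
The correct form of the second part is for $\alpha>1/2$ and $\varepsilon>0$. At $\varepsilon=0$, $F_0$ attains trivially. Both proofs also tacitly assume $\E|Z_0|<\infty$.

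\textbf{The case $\alpha>1/2$.} Here your argument is correct and follows the paper's proof. It uses the same coupling check that the shifted law has cost $\varepsilon$. It uses the same pointwise computation that on $\{Z_0\ge 0\}$ the loss grows by exactly $\alpha\varepsilon\|\overline{\boldsymbol\beta}\|_*/\P(Z_0\ge0)$. It uses the same non-attainment mechanism: an optimal $\mathrm W_1$ coupling together with $\ell_\alpha(Z)-\ell_\alpha(Z_0)<\alpha|Z-Z_0|$ whenever $Z_0<0$ and $Z\neq Z_0$.

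You differ from the paper in two places, and both are fine and make your argument self-contained:
\begin{itemize}
\item You derive the upper bound directly, from Lipschitz constant $\alpha$ plus $|Z-Z_0|\le\|\overline{\boldsymbol\beta}\|_*\|(\mathbf X,Y)-(\mathbf X_0,Y_0)\|$. The paper goes through Lemmas~\ref{projection} and~\ref{prop:quantile}.
\item You exhibit an explicit approximating sequence (mass $\eta\to0$ pushed by $\varepsilon/\eta$) showing the supremum still equals the bound when $\P(Z_0\ge0)=0$. The paper reads this off Lemma~\ref{prop:quantile}.
\end{itemize}

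Your restriction of strictness to $\{Z\neq Z_0\}$ is more accurate than the paper's ``strict $\pi$-a.s.'' claim, which fails on $\{Z=Z_0\}$. One fix is still needed: draw the final contradiction from the equality $\E|Z-Z_0|=\varepsilon\|\overline{\boldsymbol\beta}\|_*>0$ that the chain of inequalities forces. Do not draw it from the transport cost being $\varepsilon$, since mass can move orthogonally to $(-\boldsymbol\beta,1)$ while keeping $Z=Z_0$.
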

By symmetry, for $\alpha <1/2$, if $\P\left(Y_0 - \boldsymbol{\beta}^\top \mathbf{X}_0 - s \le 0 \right)>0$, then one worst-case distribution is given by the distribution of 
\begin{align*}
    \left(\mathbf{X}_0,Y_0\right) - \frac{\varepsilon\mathbbm{1}_{\{Y_0 - \boldsymbol{\beta}^\top \mathbf{X}_0 - s \le 0 \}}}{\P\left(Y_0 - \boldsymbol{\beta}^\top \mathbf{X}_0 - s \le 0 \right)} \mathbf{v}_{\boldsymbol{\beta}} ,
\end{align*}
and if $\P\left(Y_0 - \boldsymbol{\beta}^\top \mathbf{X}_0 - s \le 0 \right)=0$, then the supremum of  problem \eqref{sup} is not attainable. 
When $\alpha=1/2$,  then the supremum of  problem \eqref{sup} is always attained by the distribution of  the following random vector
\begin{align*}
    \left(\mathbf{X}_0,Y_0\right) +  \varepsilon \left(\mathbbm{1}_{\{Y_0 - \boldsymbol{\beta}^\top \mathbf{X}_0 - s \ge 0 \}} - \mathbbm{1}_{\{Y_0 - \boldsymbol{\beta}^\top \mathbf{X}_0 - s < 0 \}} \right) \mathbf{v}_{\boldsymbol{\beta}}.
\end{align*}

\begin{proposition}[Worst-case distribution for $p=\infty$]
    \label{prop:worst-case-pinfty}
    For $p=\infty$, $\left(\mathbf{X}_0,Y_0\right)\sim F_0$ and $(\boldsymbol{\beta},s)\in \R^{d+1}$, the worst-case distribution $F^*$ to problem \eqref{sup}  is given by the distribution of the random vector
    \begin{align*}
        \left(\mathbf{X}^*,Y^*\right) = \left(\mathbf{X}_0,Y_0\right) + \varepsilon\,\operatorname{sgn}\left(Y_0 - \boldsymbol{\beta}^\top \mathbf{X}_0 - s + (2\alpha-1)\varepsilon\|\overline{\boldsymbol{\beta}}\|_*\right) \mathbf{v}_{\boldsymbol{\beta}}.
    \end{align*}
\end{proposition}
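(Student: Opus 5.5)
We prove Proposition~\ref{prop:worst-case-pinfty} by matching an upper bound on the worst-case value with a feasible distribution that attains it. Write $Z_0=Y_0-\boldsymbol{\beta}^\top\mathbf X_0-s$ and $r=\varepsilon\|\overline{\boldsymbol{\beta}}\|_*$.

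\textbf{Step 1 (reduction to one dimension).} For $p=\infty$, any $F\in\mathbb B_\infty(F_0,\varepsilon)$ is realized by a coupling in which $(\mathbf X,Y)=(\mathbf X_0,Y_0)+\boldsymbol\Delta$ with $\|\boldsymbol\Delta\|\le\varepsilon$ almost surely. By the definition of the dual norm,
$$Z:=Y-\boldsymbol{\beta}^\top\mathbf X-s=Z_0+(-\boldsymbol{\beta},1)^\top\boldsymbol\Delta\in[Z_0-r,\,Z_0+r].$$
Conversely, the choice $\boldsymbol\Delta=\pm\varepsilon\mathbf v_{\boldsymbol\beta}$ shifts $Z$ by exactly $\pm r$. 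This is the projection result \citet[Theorem 5]{mao2022model} in the case $p=\infty$. If the infimum defining $\mathrm W_\infty$ is not attained, we can work with radius $\varepsilon+\eta$ and let $\eta\downarrow0$; the bound in Step 2 is continuous in the radius, so nothing is lost.

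\textbf{Step 2 (pointwise maximization).} Since $\ell_\alpha$ is convex, its maximum over the interval $[Z_0-r,Z_0+r]$ is attained at an endpoint. Hence
$$\E^F[\ell_\alpha(Z)]\le \E\big[\max\{\ell_\alpha(Z_0+r),\,\ell_\alpha(Z_0-r)\}\big].$$
We determine which endpoint wins. For $z\ge r$, the right endpoint gives $\alpha(z+r)$ and the left gives $\alpha(z-r)$, so the right wins. For $z\le -r$, the left endpoint gives $(1-\alpha)(r-z)$, which beats $(1-\alpha)(-z-r)$, so the left wins. For $|z|<r$, we compare $\alpha(z+r)$ with $(1-\alpha)(r-z)$; the first is at least the second exactly when $z\ge(1-2\alpha)r$.

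Since $(1-2\alpha)r\in(-r,r)$, this gives a single threshold: the right endpoint is optimal iff $z+(2\alpha-1)r\ge 0$. Therefore the pointwise maximizer is
$$Z_0+r\,\sgn\big(Z_0+(2\alpha-1)r\big),$$
with ties at the threshold harmless, since there both endpoints give the same value.

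\textbf{Step 3 (attainment).} Define $(\mathbf X^*,Y^*)$ by adding $\varepsilon\,\sgn(\cdot)\,\mathbf v_{\boldsymbol\beta}$ as in the statement. Its perturbation has norm at most $\varepsilon$ almost surely, so it lies in $\mathbb B_\infty(F_0,\varepsilon)$. Its residual is exactly the pointwise maximizer from Step 2, so it attains the upper bound and is a worst-case distribution.

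There is one point to check. When $Z_0+(2\alpha-1)r=0$ we have $\sgn=0$, so no perturbation is applied. But on that event $\ell_\alpha(Z_0)=\ell_\alpha\big(-(2\alpha-1)r\big)$, which is generally strictly smaller than the common endpoint value $\alpha(z+r)=2\alpha(1-\alpha)r$. So if this event has positive probability, the stated vector is not optimal. We would handle this either by reading $\sgn$ there as any choice of $\pm1$, or by noting that the statement implicitly assumes the event is null. This convention is the main subtlety; the rest is routine.
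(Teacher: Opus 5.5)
Your proof is correct and is essentially the paper's. The paper likewise checks that the shifted vector lies in $\mathbb{B}_\infty(F_0,\varepsilon)$ and that its residual attains $\E^{F_0}[\ell_\alpha(Z_0+(2\alpha-1)r)]+2\alpha(1-\alpha)r$; it takes this value from the projection lemma and the $p=\infty$ case of its one-dimensional lemma, whose proof is exactly your endpoint comparison of $\ell_\alpha$ on $[z-r,z+r]$.

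Your caveat about the tie event is also justified. The paper's proof writes $Z^*=Z_0+r$ on $\{Z_0+(2\alpha-1)r\ge 0\}$, i.e.\ it tacitly uses $\sgn(0)=+1$ even though it defines $\sgn(0)=0$. With $\sgn(0)=0$ the stated vector really does fall short by $\min\{\alpha,1-\alpha\}\,r$ on that event, so the statement should be read with $\sgn(0)\in\{\pm1\}$.
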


For $p\in(1,\infty)$, we provide the worst-case distribution that attains the supremum in \eqref{sup}, contingent on the optimal solution of problem \eqref{OP}.
\begin{proposition}[Worst-case distribution for $p\in(1,\infty)$]
    \label{prop:worst-case-p}
    For $p\in(1,\infty)$, $\left(\mathbf{X}_0,Y_0\right)\sim F_0,$ let $(\boldsymbol{\beta},s)=(\boldsymbol{\beta}^*,s^*)\in \R^{d+1}$ be an optimal solution to \eqref{OP}. Then, the worst-case distribution $F^*$ to problem \eqref{sup}  is given by the distribution of the random vector
    \begin{align*}
        \left(\mathbf{X}^*,Y^*\right) = \left(\mathbf{X}_0,Y_0\right) + \varepsilon c_{\alpha,p}^{1-q}\left(\alpha^{q-1}\mathbbm{1}_A
        -(1-\alpha)^{q-1}\mathbbm{1}_{A^c}\right)\frac{ \overline{\boldsymbol{\beta}^*}}{\|\overline{\boldsymbol{\beta}^*}\|},
    \end{align*}
    where $A$ is an event such that $\p(A)=1-\alpha$ and $A\subseteq \{Y_0 - \boldsymbol{\beta}^{*\top} \mathbf{X}_0 \ge {\rm VaR}_\alpha(Y_0 - \boldsymbol{\beta}^{*\top} \mathbf{X}_0)\}$ with ${\rm VaR}_\alpha^F(Z) = F^{-1}(\alpha)$.
\end{proposition}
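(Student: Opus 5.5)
The plan is to show two things: that $F^*$ lies in $\mathbb{B}_p(F_0,\varepsilon)$, and that its expected loss at $(\boldsymbol\beta^*,s^*)$ is at least the optimal value of \eqref{OP}. Sandwiching then gives optimality:
\[
\E^{F^*}[\ell_\alpha]\;\le\;\sup_{F\in\mathbb{B}_p(F_0,\varepsilon)}\E^F[\ell_\alpha(Y-\boldsymbol\beta^{*\top}\mathbf X-s^*)]\;=\;\text{val}\eqref{OP}\;=\;\text{val}\eqref{prob-1}\;\le\;\E^{F^*}[\ell_\alpha],
\]
where the middle equalities use optimality of $(\boldsymbol\beta^*,s^*)$ and Theorem~\ref{thm:quantile}.

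Write the perturbation as $\varepsilon\,\xi\,\mathbf w$ with $\mathbf w$ the given unit direction and
\[
\xi=c_{\alpha,p}^{1-q}\bigl(\alpha^{q-1}\mathbbm 1_A-(1-\alpha)^{q-1}\mathbbm 1_{A^c}\bigr).
\]
I read $\mathbf w$ as a direction of unit transport norm along which the residual $Z=Y-\boldsymbol\beta^{*\top}\mathbf X$ increases at rate $\|\overline{\boldsymbol\beta^*}\|_*$, i.e.\ essentially $\mathbf v_{\boldsymbol\beta^*}$. This coincides with the normalized $\overline{\boldsymbol\beta^*}$ up to the sign and norm conventions, and I expect fixing this normalization to be the only delicate bookkeeping point.

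\textbf{Feasibility.} Couple $F^*$ with $F_0$ via the identity perturbation. Since $p(q-1)=q$ and $p(1-q)=-q$,
\[
\E|\varepsilon\xi|^p=\varepsilon^p c_{\alpha,p}^{-q}\bigl(\alpha^q(1-\alpha)+(1-\alpha)^q\alpha\bigr)=\varepsilon^p,
\]
using $\P(A)=1-\alpha$. Hence $\mathrm W_p(F^*,F_0)\le\varepsilon$. The event $A$ exists because the probability space is atomless.

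\textbf{Lower bound on the loss.} Under $F^*$ the residual is
\[
Z^*=Z_0+\varepsilon\|\overline{\boldsymbol\beta^*}\|_*\,\xi .
\]
Rather than tracking signs of $Z^*-s^*$ (which would require the explicit formula \eqref{eq:relationbeta0}), I would use the pointwise bound $\ell_\alpha(u)\ge g u$ for $g\in\{\alpha,\alpha-1\}$. Set $g=\alpha\mathbbm 1_A+(\alpha-1)\mathbbm 1_{A^c}$; then $\E[g]=0$, so the constant $s^*$ drops out and
\[
\E\,\ell_\alpha(Z^*-s^*)\ge \E[gZ_0]+\varepsilon\|\overline{\boldsymbol\beta^*}\|_*\,\E[g\xi].
\]
The second expectation is
\[
\E[g\xi]=c_{\alpha,p}^{1-q}\bigl(\alpha^q(1-\alpha)+(1-\alpha)^q\alpha\bigr)=c_{\alpha,p}.
\]
For the first term, $A$ lies in the upper $(1-\alpha)$-tail of $Z_0$ with $\P(A)=1-\alpha$. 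So with $t=\mathrm{VaR}_\alpha(Z_0)$ we have $g(Z_0-t)=\ell_\alpha(Z_0-t)$ almost surely (ties at $t$ contribute zero), whence
\[
\E[gZ_0]=\E\,\ell_\alpha(Z_0-t)=\min_{\overline s}\E\,\ell_\alpha(Z_0-\overline s).
\]
Moreover, $\boldsymbol\beta^*$ together with this minimizing $\overline s$ solves \eqref{prob-1}, because Theorem~\ref{thm:quantile} makes the $\boldsymbol\beta$-part of the optimizers coincide. Therefore the lower bound equals the optimal value of \eqref{prob-1}, which closes the sandwich.

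The main obstacle I anticipate is confirming that $\boldsymbol\beta^*$ from \eqref{OP} is also a $\boldsymbol\beta$-minimizer of \eqref{prob-1}. I would derive this from \eqref{eq:eqi-key}: both problems reduce to minimizing the same function of $\boldsymbol\beta$ after optimizing out the intercept.
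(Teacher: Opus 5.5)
Your argument is correct, and its second half takes a genuinely different route from the paper's. Reading the direction as $\mathbf v_{\boldsymbol\beta^*}$ is more than bookkeeping. The literal $\overline{\boldsymbol\beta^*}/\|\overline{\boldsymbol\beta^*}\|$ moves the residual in the wrong direction even for the Euclidean norm, and for other norms it does not attain $\|\overline{\boldsymbol\beta^*}\|_*$ in general. The paper's proof makes the same substitution silently, and your feasibility step is identical to the paper's.

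For optimality, the two routes differ:
\begin{itemize}
\item The paper inserts the explicit intercept from \eqref{eq:relationbeta0}. It writes $Z^*-s^*$ as $Z_0-\overline{s}^*$ plus a nonnegative multiple of $\alpha^{q-1}\mathbbm{1}_A-(1-\alpha)^{q-1}\mathbbm{1}_{A^c}-\frac1q(\alpha^q-(1-\alpha)^q)$. It argues this is positive on $A$ and negative on $A^c$, and then evaluates $\E^{F^*}[\ell_\alpha]$ in closed form.
\item Your zero-mean linear minorant $\ell_\alpha(u)\ge gu$ makes the intercept cancel. You therefore need neither \eqref{eq:relationbeta0}, nor the sign check, nor the identification of $\overline{s}^*$ with $\mathrm{VaR}_\alpha$.
\end{itemize}
Because your bound holds for every $s$, it also shows that $F^*$ solves the sup--inf problem, i.e.\ it forms a saddle point with every optimal intercept. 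In exchange, the paper's computation exhibits the sign pattern of the worst-case residual and makes the role of the intercept shift visible.

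The obstacle you anticipate is unnecessary. The sandwich only needs the optimal value of \eqref{prob-1} to be at most $\E[gZ_0]+c_{\alpha,p}\varepsilon\|\overline{\boldsymbol\beta^*}\|_*$. That quantity is simply the objective of \eqref{prob-1} at the feasible point $(\boldsymbol\beta^*,t)$, so you never need to show $\boldsymbol\beta^*$ is optimal there.

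One caveat applies to both proofs. The identity $g(Z_0-t)=\ell_\alpha(Z_0-t)$ a.s.\ requires $\{Z_0>t\}\subseteq A$. This does not follow from $A\subseteq\{Z_0\ge t\}$ and $\P(A)=1-\alpha$ when $\P(Z_0<t)<\alpha$, which is the case for every empirical $F_0$. The danger is not ties at $t$; it is part of $\{Z_0>t\}$ falling into $A^c$. The paper asserts the same inclusion, and the statement genuinely fails without it. Take $\alpha=1/2$, $p=2$, the Euclidean norm, $\mathbf X_0=\mathbf 0$ and $Y_0$ uniform on $\{0,1\}$. Then:
\begin{itemize}
\item the pair $(\mathbf 0,0)$ is optimal for \eqref{OP} and $t=0$;
\item $A=\{Y_0=0\}$ satisfies the stated hypothesis;
\item yet $\E[\ell_{1/2}(Y^*)]=1/4<1/4+\varepsilon/2$ for $\varepsilon\in(0,1]$.
\end{itemize}
You should therefore state explicitly that $A$ is a genuine $(1-\alpha)$-tail event, $\{Z_0>t\}\subseteq A\subseteq\{Z_0\ge t\}$ a.s., which is clearly what is intended.
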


The above characterizations reveal a unifying geometric structure across different values of $p$. For the optimal $(\boldsymbol{\beta}^*,s^*)$, any worst-case perturbation shifts the data along the direction that attains the dual norm of $\overline{\boldsymbol{\beta}^*}$—equivalently, along any unit vector $\mathbf{v}_{\boldsymbol{\beta}^*}$ satisfying 
$(-\boldsymbol{\beta}^*,1)^\top\mathbf{v}_{\boldsymbol{\beta}^*} = \|\overline{\boldsymbol{\beta}^*}\|_*$. The way probability mass is reallocated depends on $p$: when $p=1$, the adversary concentrates all mass on the worst tail events of the residual; when $p=\infty$, it shifts every point uniformly by $\pm\varepsilon\,\mathbf{v}_{\boldsymbol{\beta}^*}$ according to the residual’s sign; and for $p\in(1,\infty)$, it produces a quantile-dependent piecewise perturbation of the distribution. In each case the perturbation is chosen to maximize the impact of $\overline{\boldsymbol{\beta}^*}$ under the given norm. These constructions explain why the resulting regularization involves the dual norm $\|\overline{\boldsymbol{\beta}^*}\|_*$: the dual norm quantifies the largest possible projection of $\overline{\boldsymbol{\beta}^*}$ onto any admissible perturbation.

\section{Generalization bounds} \label{sec:3}

Suppose that $F^*$ is the data-generating distribution and let  $\widehat{F}_N= \frac{1}{N} \sum_{i=1}^N \delta_{(\widehat{y}_i,\widehat{\mathbf x}_i)}$
   be the empirical distribution based on data drawn from $F^*$, which is used as the reference distribution in Wasserstein DRO.   A key question is whether the (Wasserstein) in-sample  risk
$$\widehat{J}_N:=\inf_{s\in\R,\, \boldsymbol{\beta}\in\R^d} \sup_{F\in  {\mathbb B}_p(\widehat{F}_N,\varepsilon_N)}\E^{F}\left[\ell_\alpha(Y-{\boldsymbol{\beta}}^\top  \mathbf{X} - s)\right]=\sup_{F\in  {\mathbb B}_p(\widehat{F}_N,\varepsilon_N)}\E^{F}\left[\ell_\alpha(Y-\widehat{\boldsymbol{\beta}}_N^\top  \mathbf{X} - \widehat{s}_N)\right] ,$$
can provide an upper confidence bound on the (Wasserstein)  out-of-sample risk
$$
J_{\rm oos}:=\E^{F^*}\left[\ell_\alpha(Y-\widehat{\boldsymbol{\beta}}_N^\top  \mathbf{X} - \widehat{s}_N)\right] $$
  with a radius $\varepsilon_N$ that decays at a rate scaling gracefully with the dimensionality of the random vector $(Y,\mathbf X)$.
While generalization bounds for type-$p$ Wasserstein ambiguity sets may
suffer from the curse of dimensionality in general
(see, e.g., \cite{esfahani2018data}), recent advances show that this issue can be avoided for expected-value functionals by exploiting concentration of sample averages   (see e.g.,  \cite{G22,si2020quantifying,wu2022generalization}).
Importantly, the quantile regression framework considered in this paper falls exactly into this favorable class as the objective function is the expectation of a Lipschitz continuous loss applied to a linear predictor, enabling dimension-free $O(N^{-1/2})$ guarantees.
\begin{theorem}
\label{prop:finite-sample}
 For $\alpha\in (0,1)$,  $p\ge 1$, let $F^*\in\mathcal M(\R^{d+1})$ be the data generating distribution and   $\Gamma:=\E^{F^*}[\|(Y,\mathbf{X})\|^s]<\infty$ for some $s>2$. Then for any $\eta\in(0,1)$, we have
    \begin{align}\label{eq:generalizationbound}
        \P^N\left(\E^{F^*}\left[\ell_\alpha(Y-{\boldsymbol{\beta}}^\top  \mathbf{X} - s)\right] \leq \sup_{F\in \mathbb B_p(\widehat{F}_N,\varepsilon_N(\eta))}\E^F\left[\ell_\alpha(Y-{\boldsymbol{\beta}}^\top  \mathbf{X} - s)\right],\forall  \boldsymbol{\beta},s \right) \geq 1-\eta,
    \end{align}
    and thus,
     \begin{align}
     \label{eq:260119-1}
        \P^N\left(J_{\rm oos}\le \widehat{J}_N\right) \geq 1-\eta,
    \end{align}
    where
    the radius is chosen as $
        \varepsilon_N(\eta) =  c_\alpha  \log(2N+1)^{{1}/{s}}/{\sqrt{N}}
    $
    and
    \begin{align}\label{eq:c-alpha}
        c_\alpha =   \frac{\alpha\vee (1-\alpha)}{ \alpha \wedge(1-\alpha)}  \left(360\sqrt{d+2}+2\sqrt{2\log\left(\frac{3}{\eta}\right)}+\sqrt{\frac{3\Gamma}{\eta}}\frac{32}{s-2}\sqrt{\log{\left(\frac{24}{\eta}\right)}+2(d+2)}\right).
    \end{align}
\end{theorem}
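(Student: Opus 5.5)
The proof rests on two ingredients. The first is a lower bound on the worst-case expectation that does not depend on $p$. The second is a uniform concentration bound for the empirical check-loss process over the class of linear predictors.

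\emph{Reduction to $p=1$.} Since $\mathrm W_1\le \mathrm W_p$, we have $\mathbb B_1(\widehat F_N,\varepsilon)\subseteq\mathbb B_p(\widehat F_N,\varepsilon)$, so it suffices to prove \eqref{eq:generalizationbound} for $p=1$. This explains why $c_\alpha$ does not depend on $p$. For $p=1$ and any fixed $(\boldsymbol\beta,s)$, the construction of Proposition~\ref{prop:worst-case-p1}, or simply shifting all mass by $\varepsilon\mathbf v_{\boldsymbol\beta}$, gives
\[
\sup_{F\in\mathbb B_1(\widehat F_N,\varepsilon)}\E^F[\ell_\alpha(Y-\boldsymbol\beta^\top\mathbf X-s)]\ \ge\ \E^{\widehat F_N}[\ell_\alpha(Y-\boldsymbol\beta^\top\mathbf X-s)]+(\alpha\wedge(1-\alpha))\,\varepsilon\|\overline{\boldsymbol\beta}\|_* .
\]
Indeed, $\ell_\alpha$ has slopes $\alpha$ and $-(1-\alpha)$. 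Shifting every residual by $\pm\varepsilon\|\overline{\boldsymbol\beta}\|_*$ in a common direction therefore raises the loss by at least $\min(\alpha,1-\alpha)\varepsilon\|\overline{\boldsymbol\beta}\|_*$, up to a kink correction. To avoid that correction cleanly, I would instead use the exact value from Theorem~\ref{thm:quantile}'s underlying $p=1$ computation, namely the empirical loss plus $\max\{\alpha,1-\alpha\}\varepsilon\|\overline{\boldsymbol\beta}\|_*$, whenever the relevant tail has positive mass. It then suffices to show that, with probability at least $1-\eta$, uniformly in $(\boldsymbol\beta,s)$,
\[
\E^{F^*}[\ell_\alpha(\cdot)]-\E^{\widehat F_N}[\ell_\alpha(\cdot)]\le \kappa\,\varepsilon_N\|\overline{\boldsymbol\beta}\|_*
\]
for the appropriate constant $\kappa$. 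The ratio $\frac{\alpha\vee(1-\alpha)}{\alpha\wedge(1-\alpha)}$ in $c_\alpha$ comes from normalizing the loss by $\alpha\vee(1-\alpha)$ to make it $1$-Lipschitz, and from the lower bound constant $\alpha\wedge(1-\alpha)$.

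\emph{Uniform concentration.} By positive homogeneity, $\ell_\alpha(t\,u)=t\,\ell_\alpha(u)$. Divide by $\|\overline{\boldsymbol\beta}\|_*$; the case $\boldsymbol\beta=0$ is impossible since the last coordinate of $\overline{\boldsymbol\beta}$ is $-1$. This reduces the problem to the function class
\[
\mathcal F=\{(\mathbf x,y)\mapsto \ell_\alpha(\mathbf w^\top(\mathbf x,y)-t)/(\alpha\vee(1-\alpha)):\ \|\mathbf w\|_*=1,\ t\in\R\}.
\]
Its members are $1$-Lipschitz in the norm and have envelope growing like $\|(\mathbf x,y)\|+|t|$. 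The intercept $t$ is unbounded, so I would handle it by centering: subtract $\ell_\alpha(-t)$, or note that the relevant $t$ can be restricted to quantile ranges of $\mathbf w^\top(\mathbf X,Y)$. This leaves a class whose envelope is $\|(\mathbf x,y)\|$. I would then split the process with a truncation level $M$. On $\{\|(\mathbf X,Y)\|\le M\}$ the class is bounded. Its VC/pseudo-dimension is $O(d+2)$, since these are compositions of a fixed monotone-piecewise function with affine maps on $\R^{d+1}$ plus the offset. Dudley's entropy integral with symmetrization and McDiarmid then gives the term $360\sqrt{d+2}+2\sqrt{2\log(3/\eta)}$ times $M/\sqrt N$. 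On the tail $\{\|(\mathbf X,Y)\|>M\}$ I would bound the empirical and population contributions by $\E\|Z\|\mathbbm 1_{\|Z\|>M}\le \Gamma M^{1-s}$ together with Markov/Chebyshev on the empirical tail average. This produces the $\sqrt{3\Gamma/\eta}$ term. Choosing $M\asymp(\log(2N+1))^{1/s}$ gives the stated $\log(2N+1)^{1/s}/\sqrt N$ rate, and a union bound over the three failure events of probabilities $\eta/3$ completes the argument.

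\emph{Conclusion and main obstacle.} Inequality \eqref{eq:260119-1} follows by plugging $(\widehat{\boldsymbol\beta}_N,\widehat s_N)$ into the uniform event and using the identity defining $\widehat J_N$. The main obstacle is the heavy-tail part. The moment condition only has $s>2$, and the class is unbounded in both the data and $t$. I expect to need a peeling or geometric slicing argument over truncation levels $M_k=2^k$, with the sum over shells controlled by $\Gamma\sum_k 2^{k(2-s)}$. This is where the factor $32/(s-2)$ and the $\sqrt{\log(24/\eta)+2(d+2)}$ term come from, via a Bernstein-type bound on each shell.
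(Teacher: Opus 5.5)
There is a genuine gap: your reduction runs in the wrong direction. From $\mathrm W_1\le \mathrm W_p$ one gets $\mathbb B_p(\widehat F_N,\varepsilon)\subseteq \mathbb B_1(\widehat F_N,\varepsilon)$, not the reverse. So the type-$1$ worst case is the \emph{largest} of the worst-case values, and a bound proved for $p=1$ implies nothing for $p>1$. A $p$-free radius has to survive the \emph{smallest} ball, $\mathbb B_\infty$, and that is the real content of the theorem. For the same reason, the lower bound you settle on, $\E^{\widehat F_N}[\ell_\alpha(\cdot)]+\max\{\alpha,1-\alpha\}\varepsilon\|\overline{\boldsymbol\beta}\|_*$, holds only for $p=1$. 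Apply Lemmas~\ref{projection} and~\ref{prop:quantile} with $p=\infty$, take $\alpha>1/2$, and let every residual lie below $-\varepsilon\|\overline{\boldsymbol\beta}\|_*$. Then the worst-case increment is exactly $(1-\alpha)\varepsilon\|\overline{\boldsymbol\beta}\|_*$, which is smaller. Your first lower bound is also false as written. Translating all the mass by $\varepsilon\mathbf v_{\boldsymbol\beta}$ raises every residual, so it \emph{lowers} the loss of atoms with negative residual. That is not a small kink correction.

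The repair, which is what the paper does, is the sign-dependent move $(\mathbf x_i,y_i)\mapsto(\mathbf x_i,y_i)+\varepsilon\,\sgn(z_i)\mathbf v_{\boldsymbol\beta}$, where $z_i$ is the residual and atoms with $z_i=0$ are sent upward. Every atom moves by exactly $\varepsilon$, so the perturbed law lies in $\mathbb B_\infty(\widehat F_N,\varepsilon)\subseteq\mathbb B_p(\widehat F_N,\varepsilon)$ for every $p$. Each residual moves \emph{away} from the kink by $\varepsilon\|\overline{\boldsymbol\beta}\|_*$, so each atom's loss rises by $\alpha$ or $1-\alpha$ times that amount. The increment is therefore at least $(\alpha\wedge(1-\alpha))\varepsilon\|\overline{\boldsymbol\beta}\|_*$, with no correction term. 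Combined with the $(\alpha\vee(1-\alpha))$-Lipschitz bound on the concentration side, this is exactly condition~\eqref{eq:loss-condition} with $\lambda=\frac{\alpha\vee(1-\alpha)}{\alpha\wedge(1-\alpha)}$, and it is the source of the ratio in $c_\alpha$. Your explanation of the ratio is correct, but it contradicts the $p=1$, $\max$-based route you actually propose.

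On the concentration side, the paper reproves nothing and simply imports Lemma~\ref{lem:2}. Your truncation and peeling sketch is a plausible way to get a rate $\log(2N+1)^{1/s}/\sqrt N$. However, nothing in it pins down the specific constants $360\sqrt{d+2}$, $32/(s-2)$, and so on. The unbounded intercept is also handled more cleanly than by centering. Since $\ell_\alpha(\cdot-t)$ is $(\alpha\vee(1-\alpha))$-Lipschitz for every $t$, the deviation is bounded uniformly in $s$ by $(\alpha\vee(1-\alpha))\|\overline{\boldsymbol\beta}\|_*$ times the $\mathrm W_1$ distance between the true and empirical laws of $\mathbf w^\top(\mathbf X,Y)$, with $\mathbf w=(-\boldsymbol\beta,1)/\|\overline{\boldsymbol\beta}\|_*$. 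Only the direction $\mathbf w$ then has to be controlled uniformly.
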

Theorem \ref{prop:finite-sample} shows that the Wasserstein in-sample risk provides a uniform
upper confidence bound on the true risk across all decisions $(\boldsymbol{\beta},s)$, with a radius
decaying at the parametric rate $N^{-1/2}$.

If we employ the regularized quantile regression program
    \begin{align} \label{eq:260119-3}
        \inf_{(\boldsymbol{\beta},{s})\in\R^{d+1}}  \left\{\E^{\widehat{F}_N} \left[ \ell_{\alpha}(Y - \boldsymbol{\beta}^\top \mathbf{X}-{s}) \right] + \lambda\|\overline{\boldsymbol{\beta}}\|_* \right\},
    \end{align}
     where $\lambda\ge 0$ is  a regularization parameter, to estimate $(\boldsymbol{\beta},s)$, denoting by $(\widetilde{\boldsymbol{\beta}}_N,\widetilde{s}_N)=(\widetilde{\boldsymbol{\beta}}_N(\lambda),\widetilde{s}_N(\lambda))$ the minimizer to the problem \eqref{eq:260119-3}, then the regularized  in-sample risk is
\begin{align*}
\widetilde{J}_N (\lambda)
& := \E^{\widehat{F}_N}\left[\ell_\alpha(Y- \widetilde{\boldsymbol{\beta}}_N^\top  \mathbf{X} - \widetilde{s}_N)\right] +\lambda\|(\widetilde{\boldsymbol{\beta}}_N,-1)\|_*
\end{align*}
and the regularized   out-of-sample risk is
\begin{align}\label{eq:outloss-reg}
\widetilde{J}_{\rm oos}(\lambda):=\E^{F^*}\left[\ell_\alpha(Y-\widetilde{\boldsymbol{\beta}}_N^\top  \mathbf{X} - \widetilde{s}_N)\right] .
\end{align}
If we take
$\lambda=c_{\alpha,p}\varepsilon_N(\eta)$, then by Theorem \ref{thm:quantile}, we have
\begin{align*}
\widetilde{J}_N (c_{\alpha,p}\varepsilon_N(\eta))
&=  \inf_{(\boldsymbol{\beta},{s})\in\R^{d+1}}  \left\{\E^{\widehat{F}_N} \left[ \ell_{\alpha}(Y - \boldsymbol{\beta}^\top \mathbf{X}-{s}) \right] + c_{\alpha,p}\varepsilon\|\overline{\boldsymbol{\beta}}\|_* \right\}\\
&= \sup_{F\in  {\mathbb B}_p(\widehat{F}_N,\varepsilon_N)}\E^{F}\left[\ell_\alpha(Y-\widehat{\boldsymbol{\beta}}_N^\top  \mathbf{X} - \widehat{s}_N)\right]   = \widehat{J}_N,
\end{align*}
that is, the Wasserstein in-sample risk and the regularized in-sample risk coincide. In  sharp contrast, the out-of-sample risk $\widetilde{J}_{\rm oos}(c_{\alpha,p}\varepsilon_N(\eta))$  is different from the Wasserstein out-of-sample risk  $J_{\rm oos}$.

The key distinction, as revealed by Theorem~\ref{thm:quantile}, lies in the intercept adjustment induced by distributional robustness.
Although the slope coefficients coincide under the two formulations, the optimal intercepts differ when $p>1$.
To understand the practical consequences of this structural difference, we now compare the out-of-sample risks of the two estimators: $\widetilde{J}_{\rm oos} $ and $ J_{\rm oos}$.
Without loss of generality assume $\alpha>1/2$ and denote
$\Delta s=\widehat{s}_N-\widetilde{s}_N =\frac{\varepsilon}{q} \left(\alpha^{q}-(1-\alpha)^{q}\right) c_{\alpha,p}^{1-q}\|\overline{\boldsymbol{\beta}^*_N}\|_* $ by \eqref{eq:relationbeta0}.  By standard manipulation, one can verify that
\begin{align}
   \widetilde{J}_{\rm oos} - J_{\rm oos}
   & = \Delta s \left( \alpha - \P^{F^*}(Y-\widetilde{\boldsymbol{\beta}}_N^\top \mathbf{X}-\widetilde{s}_N<\Delta s) \right)+ \E^{F^*}\left[(Y-\widetilde{\boldsymbol{\beta}}_N^\top \mathbf{X}-\widetilde{s}_N)\mathbbm{1}_{\{Y-\widetilde{\boldsymbol{\beta}}_N^\top \mathbf{X}-\widetilde{s}_N\in[0,\Delta s)\}}\right]\notag\\
    & = \int_0^{\Delta s} \left[\alpha  -   \P^{F^*} (Y-\widetilde{\boldsymbol{\beta}}_N^\top \mathbf{X}-\widetilde{s}_N\le y) \right]\, {\rm d } y. \label{eq:diffout}
\end{align}
From the integral representation \eqref{eq:diffout}, we can find a  sufficient condition under which the proposed DRO approach achieves a smaller out-of-sample loss than the regularized model. Specifically, we have
$
J_{\rm oos} \le\widetilde{J}_{\rm oos}
$
is guaranteed if
$
\P^{F^*} (Y-\widetilde{\boldsymbol{\beta}}_N^\top \mathbf{X}-\widetilde{s}_N \le y ) < \alpha$  for all $y \in [0,\Delta s]$, that is, a uniform under-coverage over the interval $[0,\Delta s]$.

Existing studies have shown that the sample average approximation (SAA)-based quantile regression estimator does exhibit an intrinsic under-coverage bias, in regimes where the sample size is small relative to the dimension of the covariates (see, e.g., \cite{bai2021understanding}). Since standard regularization essentially operates through coefficient shrinkage, it does not adequately address this bias mechanism. In particular, when the Wasserstein radius—and hence the associated regularization parameter—is small, the regularized estimator remains close to the SAA solution and is therefore also prone to under-coverage. At the same time, a small Wasserstein radius implies a small value of $\Delta s$, which makes the above sufficient condition more likely to hold and, consequently, guarantees that the DRO approach achieves a smaller out-of-sample loss than regularization.
In Section~\ref{sec:numerical}, we further demonstrate through numerical experiments that the proposed DRO approach consistently outperforms regularized quantile regression across a wide range of parameter choices. This empirical advantage is plausibly driven by finite-sample uncertainty rather than asymptotic statistical effects.

To end this section, by noting the equality
$ \widehat{s}_N-\widetilde{s}_N= \frac{\varepsilon}{q} \left(\alpha^{q}-(1-\alpha)^{q}\right) c_{\alpha,p}^{1-q}\|(\widetilde{\boldsymbol{\beta}}_N,-1)\|_*$
and the lipschitz continuity of the check loss function,
 we immediately have the following finite-sample guarantee for the regularized quantile regression  based on \eqref{eq:260119-1}.
\begin{proposition}
\label{thm:finite-sample-regularized}
For $\eta\in (0,1)$ and  $\lambda =c_{\alpha,p}\varepsilon_N(\eta)$, under the conditions of Theorem \ref{prop:finite-sample}, it holds that
\begin{align}
     \label{eq:260119-2}
        \P^N\left(\widetilde{J}_{\rm oos}(\lambda)\le \widetilde{J}_N (\lambda)+ \rho(\widetilde{\boldsymbol{\beta}}_N)\lambda\right) \geq 1-\eta,
    \end{align}
    where
$\rho(\widetilde{\boldsymbol{\beta}}_N):=\max\{\alpha,1-\alpha\}\|(-\widetilde{\boldsymbol{\beta}}_N,1)\|_*$, $\widetilde{\boldsymbol{\beta}}_N$ is the optimal solution of $\boldsymbol{\beta}$ to problem   \eqref{eq:260119-3},  and $\widetilde{J}_{\rm oos}(\lambda)$ is defined by \eqref{eq:outloss-reg}.
\end{proposition}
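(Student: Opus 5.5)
The argument is a short chain of inequalities. We start from the high-probability event of Theorem~\ref{prop:finite-sample} and correct for the intercept shift between the DRO and the regularized solutions. Throughout, take $\varepsilon=\varepsilon_N(\eta)$ and $\lambda=c_{\alpha,p}\varepsilon$.

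\emph{Identifying the two solutions.} Theorem~\ref{thm:quantile} is applied with $F_0=\widehat{F}_N$. Problem \eqref{eq:260119-3} with this $\lambda$ is exactly \eqref{prob-1}. So if $(\widetilde{\boldsymbol{\beta}}_N,\widetilde{s}_N)$ solves \eqref{eq:260119-3}, then $(\widehat{\boldsymbol{\beta}}_N,\widehat{s}_N):=(\widetilde{\boldsymbol{\beta}}_N,\widetilde{s}_N+\Delta s)$ solves \eqref{OP}, where $\Delta s$ is given by \eqref{eq:relationbeta0}; for $p=1$ we have $\Delta s=0$. Moreover, the optimal values agree: $\widehat{J}_N=\widetilde{J}_N(\lambda)$. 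On the event in \eqref{eq:260119-1}, which has probability at least $1-\eta$, we then get
\[
\E^{F^*}\big[\ell_\alpha(Y-\widetilde{\boldsymbol{\beta}}_N^\top\mathbf X-\widetilde{s}_N-\Delta s)\big]=J_{\rm oos}\le \widehat{J}_N=\widetilde{J}_N(\lambda).
\]

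\emph{Lipschitz correction.} The check loss is $\max\{\alpha,1-\alpha\}$-Lipschitz, so pointwise $\ell_\alpha(u)\le \ell_\alpha(u-\Delta s)+\max\{\alpha,1-\alpha\}|\Delta s|$. Taking expectations under $F^*$ gives
\[
\widetilde{J}_{\rm oos}(\lambda)\le J_{\rm oos}+\max\{\alpha,1-\alpha\}|\Delta s|.
\]
It remains to show $|\Delta s|\le \|(-\widetilde{\boldsymbol{\beta}}_N,1)\|_*\,\lambda$. Since the dual norm is absolutely homogeneous, this is equivalent to
\[
\frac{1}{q}\,\big|\alpha^q-(1-\alpha)^q\big|\,c_{\alpha,p}^{1-q}\le c_{\alpha,p},
\qquad\text{i.e.}\qquad
\big|\alpha^q-(1-\alpha)^q\big|\le q\,c_{\alpha,p}^q .
\]

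\emph{The scalar inequality.} For $p\in(1,\infty)$ we have $c_{\alpha,p}^q=\alpha^q(1-\alpha)+\alpha(1-\alpha)^q$. Assume without loss of generality that $\alpha\ge 1/2$, and write $\beta=1-\alpha$. The target is
\[
\alpha^q-\beta^q\le q\big(\alpha^q\beta+\alpha\beta^q\big).
\]
By the mean value theorem, $\alpha^q-\beta^q=q\xi^{q-1}(\alpha-\beta)$ for some $\xi\in[\beta,\alpha]$, which is at most $q\alpha^{q-1}(\alpha-\beta)$ because $q>1$. So it suffices to show $\alpha^{q-1}(\alpha-\beta)\le \alpha^q\beta+\alpha\beta^q$. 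Using $\alpha+\beta=1$, the left side equals $\alpha^{q-1}(\alpha-\beta)(\alpha+\beta)=\alpha^{q-1}(\alpha^2-\beta^2)$. This naive bound is not obviously dominated by the right side, so this scalar inequality is where the real work lies.

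If the inequality $|\Delta s|\le c_{\alpha,p}\varepsilon\|\cdot\|_*$ turns out to fail as stated, the fallback is a different bound. The intercept shift equals the minimizer displacement $s$ in the one-dimensional worst-case problem over the ball $\mathbb B_p(G_0,\varepsilon\|\overline{\boldsymbol\beta}\|_*)$. A quantile, i.e.\ ${\rm VaR}_\alpha$, moves by at most the radius times $\alpha^{-1/p}$ or $(1-\alpha)^{-1/p}$. I would then absorb the resulting constants using the explicit form of $\Delta s$ from Proposition~\ref{prop:worst-case-p}. There the perturbation magnitudes $c^{1-q}\alpha^{q-1}\varepsilon$ and $c^{1-q}(1-\alpha)^{q-1}\varepsilon$ bound any location shift, and each of them, after multiplication by the Lipschitz constant, should be checked against $\lambda=c_{\alpha,p}\varepsilon$ by direct computation.

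For $p=\infty$ (where $q=1$ and $c=2\alpha(1-\alpha)$) the inequality reads $|2\alpha-1|\le 2\alpha(1-\alpha)$ with the literal constants. Since this fails near $\alpha=1$, the $p=\infty$ case must be handled through the intended reading of $\rho$, and I expect this constant bookkeeping to be the main obstacle.
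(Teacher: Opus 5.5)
Your chain is the paper's own argument. The paper obtains the proposition ``immediately'' from \eqref{eq:260119-1}, the intercept relation of Theorem~\ref{thm:quantile}, and the $\max\{\alpha,1-\alpha\}$-Lipschitz continuity of $\ell_\alpha$. Everything you write up to $\widetilde{J}_{\rm oos}(\lambda)\le \widetilde{J}_N(\lambda)+\max\{\alpha,1-\alpha\}|\Delta s|$ on the event of \eqref{eq:260119-1} is correct; since \eqref{eq:generalizationbound} is uniform in $(\boldsymbol{\beta},s)$, the choice of DRO minimizer does not matter.

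\textbf{The gap.} The step you left open is $|\Delta s|\le\lambda\|(-\widetilde{\boldsymbol{\beta}}_N,1)\|_*$, i.e.\ $|\alpha^q-(1-\alpha)^q|\le q\,c_{\alpha,p}^q$. This inequality is false away from $\alpha=1/2$, so it cannot be the missing piece.
\begin{itemize}
\item For $p=2$ one has $c_{\alpha,2}^2=\alpha(1-\alpha)$ and $\alpha^2-(1-\alpha)^2=2\alpha-1$. The inequality reads $|2\alpha-1|\le 2\alpha(1-\alpha)$, which holds only for $\alpha\in[1-1/\sqrt2,\,1/\sqrt2]$. At $\alpha=0.9$ one gets $|\Delta s|=\tfrac{40}{9}\,\lambda\|(-\widetilde{\boldsymbol{\beta}}_N,1)\|_*$.
\item For every $p\in(1,\infty)$, as $\alpha\to1$ the left side tends to $1$ while $q\,c_{\alpha,p}^q\to0$.
\item For $p=\infty$ it fails for $\alpha>1/\sqrt2$, as you suspected.
\end{itemize}
Hence your chain proves only
\[
\P^N\Bigl(\widetilde{J}_{\rm oos}(\lambda)\le \widetilde{J}_N(\lambda)+\kappa_{\alpha,p}\,\rho(\widetilde{\boldsymbol{\beta}}_N)\,\lambda\Bigr)\ge 1-\eta,
\qquad
\kappa_{\alpha,p}:=\frac{|\alpha^q-(1-\alpha)^q|}{q\,c_{\alpha,p}^{q}},
\]
with $\kappa_{\alpha,1}=0$ and $q=1$ for $p=\infty$. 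The same is true of the paper's one-line justification, which uses this inequality implicitly. The display implies the stated bound when $\kappa_{\alpha,p}\le 1$, in particular always for $p=1$, but not in general.

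\textbf{Your fallback does not close this.} $\Delta s$ is already known in closed form, so bounds on how far a quantile can move under perturbation cannot improve on it. The obstruction is that its exact size is too large relative to $\lambda$. Any comparison of $\widetilde{J}_{\rm oos}$ with $J_{\rm oos}$ through the Lipschitz constant pays exactly $\max\{\alpha,1-\alpha\}|\Delta s|$.

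A route that bypasses $\Delta s$ is to apply \eqref{eq:generalizationbound} directly at $(\widetilde{\boldsymbol{\beta}}_N,\widetilde{s}_N)$. Using Lemma~\ref{projection}, $\mathbb{B}_p\subseteq\mathbb{B}_1$, and the case $p=1$ of Lemma~\ref{prop:quantile}, this gives
\[
\widetilde{J}_{\rm oos}(\lambda)\le\widetilde{J}_N(\lambda)+\bigl(\max\{\alpha,1-\alpha\}-c_{\alpha,p}\bigr)\,\varepsilon_N(\eta)\|(-\widetilde{\boldsymbol{\beta}}_N,1)\|_*.
\]
This still exceeds $\rho(\widetilde{\boldsymbol{\beta}}_N)\lambda$ whenever $(1+\max\{\alpha,1-\alpha\})\,c_{\alpha,p}<\max\{\alpha,1-\alpha\}$. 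For example, with $p=2$ and $\alpha=0.9$ the two corrections are $0.6$ versus $0.27$, in units of $\varepsilon_N(\eta)\|(-\widetilde{\boldsymbol{\beta}}_N,1)\|_*$.

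To obtain the proposition for all $\alpha$, you would need one of the following:
\begin{itemize}
\item restrict $\alpha$;
\item enlarge the constant, e.g.\ to $\max\{1,\kappa_{\alpha,p}\}\,\rho(\widetilde{\boldsymbol{\beta}}_N)$;
\item use a sharper concentration input than Theorem~\ref{prop:finite-sample} provides as stated.
\end{itemize}
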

Proposition~\ref{thm:finite-sample-regularized} provides a finite sample guarantee result for regularized quantile regression. Bounds of this structural form are standard in modern generalization theory \citep{bousquet2002stability,esfahani2018data}. In contrast to the classical regression literature, the focus here is not on controlling the estimation error of the regression coefficients, but rather on providing an explicit probabilistic upper bound on the expected loss evaluated under the unknown data-generating distribution.
Existing results on regularized quantile regression typically derive oracle inequalities for estimation and prediction errors under structural assumptions such as sparsity, restricted eigenvalue conditions, or correct model specification \citep{belloni2011l1}.
In contrast, our bounds provide high-probability guarantees on the expected out-of-sample loss itself: the true risk is bounded above by the regularized empirical risk plus a deviation term of order $O(n^{-1/2})$. These guarantees are obtained under mild moment conditions and do not rely on sparsity, model correctness, or specific structural assumptions.
 Thus, our results complement existing regularized quantile regression theory by offering finite-sample, model-agnostic risk control while maintaining comparable convergence rates.

\section{Fixed design quantile regression for linear model} \label{sec:4}

Another setting of QR is the fixed design setting under which the covariates are predetermined, so the resulting observations are independent but generally not identically distributed. As a consequence, standard Wasserstein distributionally robust formulations based on empirical distributions are not directly applicable.
To address this issue, \citet{qi2021robust} proposed a Wasserstein DR-QR model under fixed design. Their approach is built upon a linear data-generating assumption and follows a regress-then-robustify paradigm.
Specifically, \citet{qi2021robust} assume the linear model  $$y = \boldsymbol{\beta}^\top \boldsymbol{x} + e,$$ where $e$ denotes the error term. The covariates $\boldsymbol{x}_1,\ldots,\boldsymbol{x}_N$ are assumed to be predetermined, and the corresponding responses $\{y_i\}_{i\in[N]}$ are independent but not identically distributed. A key implication of the linear model assumption is that the same coefficient vector $\boldsymbol{\beta}$ governs all conditional quantiles of $y$ given $\boldsymbol{x}$, which allows the estimation of $\boldsymbol{\beta}$ to be separated from the subsequent quantile estimation.
Since the lack of identical distributions makes it infeasible to directly construct a Wasserstein ambiguity set for the joint distribution of $(\boldsymbol{x},y)$, the authors adopt a two-step procedure.
In the regression step, they estimate the common regression coefficient $\boldsymbol{\beta}$ using ordinary least squares. This step relies on the following standard regularity conditions on the design matrix.
\begin{assumption} \label{asp-1}
 \begin{itemize}
 \item [(i)] The design matrix ${\bf X}\in\R^{N\times d}$  of full rank;
 \item [(ii)] As sample size $N$ goes to infinity, the   matrix
$  \lim_{N\to\infty} \frac1N {\bf X}^\top {\bf X} $
exists and is finite nonsingular.
\end{itemize}
\end{assumption}
Under this assumption, the ordinary least squares estimator $\widehat{\boldsymbol{\beta}}_{\rm ols} = ({\bf X}^\top{\bf X})^{-1} {\bf X}^\top {\bf y}$  is well defined and consistent, where  ${\bf y}$ is a column vector that stores $\{y_i\}_{i\in [N]}$. In the robustify step, \citet{qi2021robust} construct adjusted residuals $z_i:=y_i- {\betaols}^{\,\top}\boldsymbol{x}_i$, $i\in [N]$, and use the empirical distribution $\widehat{F}_N^{\rm ols} := \frac1N\sum_{i=1}^N \delta_{z_i}$ as a surrogate for the distribution of the target residual  $\overline{e} :=e+(\boldsymbol{\beta}-\betaols)^\top\boldsymbol{c}$ where $\boldsymbol{c}\in\mathbb{R}^d$ is a fixed feature vector of interest. This construction yields an approximately $i.i.d.$  sample, thereby enabling the application of Wasserstein distributionally robust optimization to the residual distribution. They then consider the following Wasserstein DRO problem based on a type-1 Wasserstein ball:

\begin{align}
    \label{eq:fixed-design}
    \inf_{s\in\R} \sup_{F \in \mathbb B_p\left(\widehat{F}_N^{\rm ols},\,\varepsilon\right)} \E^{F} \left[\ell_\alpha(\overline{e}-s)\right]
\end{align}
with $p=1$. Theorem 1 of \citet{qi2021robust} shows that problem \eqref{eq:fixed-design} admits the tractable reformulation
\begin{align*}
    \inf_{s\in\R,\,\lambda\geq\alpha\vee(1-\alpha)} \lambda \varepsilon + \E^{\widehat{F}_N^{\rm ols}}\left[\ell_\alpha(\overline{e}-s)\right].
\end{align*}
However, the robustness parameter $\lambda$ and the decision variable $s$ are completely decoupled in the above formulation, which leads to the trivial optimizer $\lambda^*=\alpha\vee(1-\alpha)$. Consequently, the optimal solution $s^*$ coincides with the empirical $\alpha$-quantile of $\widehat{F}_N^{\rm ols}$ and is independent of the Wasserstein radius $\varepsilon$. As a result, despite its distributionally robust formulation, the resulting estimator does not exhibit genuine robustness with respect to distributional perturbations.

We give a regularization formulation of problem \eqref{eq:fixed-design} uniformly for general type-$p$ Wasserstein distance with $p\ge 1$ in the following proposition which follows from the same reasoning as in Theorem \ref{thm:quantile}.
\begin{proposition}
    For $p\ge 1$ and $\varepsilon\geq 0$, the problem \eqref{eq:fixed-design} admits the reformulation:
    \begin{align} \label{eq:fixed-ref}
        \inf_{\overline{s}\in\R}  \E^{\widehat{F}_N^{\rm ols}}\left[\ell_\alpha\left(\overline{e}-\overline{s}\right)\right] + c_{\alpha,p}\varepsilon,
    \end{align}
 where $c_{\alpha,p}$ is given by \eqref{eq:C_alphap}, and  $\overline{s}^*$ is the optimal solution to problem \eqref{eq:fixed-ref}
 if and only if

 \begin{align} \label{eq:beta_0-cr}
       s^* : =\begin{cases}
           \overline{s}^*, & p=1,\\
            \overline{s}^*+\frac{\varepsilon}{q} \left(\alpha^{q}-(1-\alpha)^{q}\right) c_{\alpha,p}^{1-q},& p>1,
        \end{cases}
    \end{align}
is the optimal solution to problem \eqref{eq:fixed-design}.
\end{proposition}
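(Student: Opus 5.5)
This proposition is the one-dimensional special case of Theorem~\ref{thm:quantile}. I would derive it by specialization rather than by a new argument. Take $d=0$, or formally set $\boldsymbol{\beta}=\mathbf 0$ with no covariates, so that the random vector is the scalar $\overline{e}$ and the reference distribution is $F_0=\widehat{F}_N^{\rm ols}\in\mathcal M(\R)$. The ambiguity set $\mathbb B_p(\widehat{F}_N^{\rm ols},\varepsilon)$ is then a one-dimensional Wasserstein ball with transport cost $|\cdot|$, whose dual norm is again $|\cdot|$, so $\|\overline{\boldsymbol{\beta}}\|_*=|-1|=1$. Substituting into \eqref{prob-1} and \eqref{eq:relationbeta0} gives \eqref{eq:fixed-ref} and \eqref{eq:beta_0-cr} directly. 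The dimension restriction in Theorem~\ref{thm:quantile} should not matter, since its proof (via the projection result of \citet[Theorem 5]{mao2022model}) already reduces everything to a one-dimensional ball of radius $\varepsilon\|\overline{\boldsymbol{\beta}}\|_*$ around the distribution of the residual. Alternatively, I would embed the problem in $\R^{2}$ with a degenerate covariate $X\equiv 0$ and restrict to $\beta=0$; the norm normalization then only requires the norm on $\R^2$ to restrict to $|\cdot|$ on the response axis.

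To make the argument self-contained, I would redo the key one-dimensional computation, namely \eqref{eq:eqi-key} with $\boldsymbol{\beta}$ absent. This has three steps.
\begin{enumerate}
\item[(i)] Interchange $\inf_s$ and $\sup_F$. The objective is linear in $F$ and convex in $s$, the ball is convex, and coercivity in $s$ holds uniformly over the ball because the check loss grows linearly and the ball has bounded first moments. A Sion-type minimax theorem then applies.
\item[(ii)] Evaluate $\sup_{F\in\mathbb B_p(G_0,\varepsilon)}\inf_s\E^F[\ell_\alpha(Z-s)]$. Use the identity $\inf_s\E[\ell_\alpha(Z-s)]=\E[Z(\mathbbm 1_{\{U>\alpha\}}-(1-\alpha))]$, where $U$ is a uniform comonotonic with $Z$; this is an expectile/CVaR-type representation, $\alpha(1-\alpha)$ times the difference of upper and lower tail means. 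It is a comonotonic-additive, translation-invariant, positively homogeneous functional with a signed-measure weight $w(u)=\mathbbm 1_{\{u>\alpha\}}-(1-\alpha)$ of mean zero. Its worst case over a $p$-Wasserstein ball is the value at $G_0$ plus $\varepsilon\,\|w\|_{L^q}$, attained by shifting the quantile function by $\varepsilon\, \sgn(w)|w|^{q-1}/\|w\|_q^{q-1}$. Hölder's inequality gives the upper bound and this perturbation gives equality.
\item[(iii)] Compute $\|w\|_q=\big((1-\alpha)\alpha^q+\alpha(1-\alpha)^q\big)^{1/q}=c_{\alpha,p}$. For $p=1$ this becomes $\sup|w|=\max\{\alpha,1-\alpha\}$, and for $p=\infty$ it becomes $\E|w|=2\alpha(1-\alpha)$, consistent with \eqref{eq:C_alphap}.
\end{enumerate}

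For the optimizer relation, the worst-case distribution is $G_0$ with quantile function shifted by $\varepsilon c_{\alpha,p}^{1-q}\big(\alpha^{q-1}\mathbbm 1_{\{u>\alpha\}}-(1-\alpha)^{q-1}\mathbbm 1_{\{u\le\alpha\}}\big)$, in agreement with Proposition~\ref{prop:worst-case-p}. Its $\alpha$-quantile is the optimal $s$ at the saddle point. Comparing it with the $\alpha$-quantile $\overline{s}^*$ of $G_0$ produces the shift $\frac{\varepsilon}{q}(\alpha^q-(1-\alpha)^q)c_{\alpha,p}^{1-q}$ claimed in \eqref{eq:beta_0-cr}. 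The factor $1/q$ comes from averaging the jump of the shift across the quantile level, since the saddle-point $s$ must balance left and right derivatives of $\E^{F^*}[\ell_\alpha(\cdot-s)]$ over the flat piece.

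I expect the main obstacle to be the ``if and only if'' correspondence between the optimizers, including possible non-uniqueness when the empirical $\alpha$-quantile is an interval. My first approach is to characterize the optimal $s$ of the minimax problem \eqref{eq:fixed-design} directly: write $\sup_F\E^F[\ell_\alpha(\overline{e}-s)]$ explicitly as a function of $s$ via the one-dimensional duality, and check that it equals the translate $g(s-\Delta)+c_{\alpha,p}\varepsilon$ of $g(\overline{s}):=\E^{\widehat F^{\rm ols}_N}[\ell_\alpha(\overline{e}-\overline{s})]$ near the minimum set, where $\Delta=\frac{\varepsilon}{q}(\alpha^q-(1-\alpha)^q)c_{\alpha,p}^{1-q}$. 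Failing that, I would at least verify that the subdifferential conditions at $s^*$ and at $\overline{s}^*$ are equivalent under the shift.
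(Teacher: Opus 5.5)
\paragraph{Review of the proof proposal.} Your specialization in the first paragraph is the paper's route: the paper derives the proposition by the same reasoning as Theorem~\ref{thm:quantile}, i.e.\ the case $d=0$ with $\|\overline{\boldsymbol{\beta}}\|_*=1$. It gives the value identity and the ``if'' direction. However, Theorem~\ref{thm:quantile} as stated only proves one direction, so the ``only if'' has to come from its proof. The self-contained derivation of the intercept shift you offer in its place does not work.

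Your Sion-plus-H\"older computation of $\sup_F\inf_s$ is a correct, duality-free route to the value and to $F^*$. It does not tell you which $s$ attains $\inf_s\sup_F$. Write $G_0:=\widehat F_N^{\rm ols}$. The quantile function of $F^*$ jumps at $u=\alpha$ from $G_0^{-1}(\alpha)-\varepsilon c_{\alpha,p}^{1-q}(1-\alpha)^{q-1}$ to $G_0^{-1}(\alpha+)+\varepsilon c_{\alpha,p}^{1-q}\alpha^{q-1}$. So every point of this interval is an $\alpha$-quantile of $F^*$, and $s\mapsto\E^{F^*}[\ell_\alpha(\overline e-s)]$ is constant there. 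Both one-sided derivatives vanish inside it, so ``balancing'' them selects nothing. Minimizing against $F^*$ is only a necessary saddle-point condition. When $N\alpha\notin\mathbb N$, the optimal $s$ is a single point inside this interval of positive length. The factor $1/q=(p-1)/p$ comes from the dual multiplier, not from averaging the jump.

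Your fallback also fails. Let $g(t):=\E^{G_0}[\ell_\alpha(\overline e-t)]$, let $\Delta$ be the shift in \eqref{eq:beta_0-cr}, and let $\phi(s):=\sup_{F\in\mathbb B_p(G_0,\varepsilon)}\E^F[\ell_\alpha(\overline e-s)]$. Lemma~\ref{prop:quantile} gives $\phi(s)=\inf_{\lambda>0}\{f(\lambda)+g(s-k_2\lambda^{1-q})\}$ with $f(\lambda)=\lambda\varepsilon^p+k_1\lambda^{1-q}$. Now $f'(\lambda^*)=0$, while $g$ has nonzero one-sided slopes off $\arg\min g$. Moving $\lambda$ slightly away from $\lambda^*$ therefore shows $\phi(s)<g(s-\Delta)+c_{\alpha,p}\varepsilon$ whenever $s-\Delta\notin\arg\min g$, for $p\in(1,\infty)$ and $\alpha\neq 1/2$. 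So $\phi$ does not coincide with $g(\cdot-\Delta)+c_{\alpha,p}\varepsilon$ on any neighbourhood of its minimizers. This is the paper's remark that no additive identity holds at fixed $s$.

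The missing step uses the same formula directly. The function $f$ is strictly convex on $(0,\infty)$ with unique minimizer $\lambda^*$, and $f(\lambda^*)=c_{\alpha,p}\varepsilon$, $k_2(\lambda^*)^{1-q}=\Delta$. Hence $\phi(s)\ge c_{\alpha,p}\varepsilon+\min g$ for every $s$. Since $g\ge 0$ and $f\to\infty$ at both ends of $(0,\infty)$ when $\varepsilon>0$, the infimum over $\lambda$ is attained. Any minimizing $\lambda$ in the equality case must satisfy $f(\lambda)=c_{\alpha,p}\varepsilon$, i.e.\ $\lambda=\lambda^*$. So $\phi(s)=c_{\alpha,p}\varepsilon+\min g$ holds if and only if $s-\Delta\in\arg\min g$. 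Therefore $\min\phi=\min g+c_{\alpha,p}\varepsilon$ and $\arg\min\phi=\arg\min g+\Delta$. This is the value identity together with both directions of the equivalence.

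For $p=1$ and $p=\infty$, Lemma~\ref{prop:quantile} gives exactly $\phi(s)=g(s)+\varepsilon\max\{\alpha,1-\alpha\}$ and $\phi(s)=g(s-(2\alpha-1)\varepsilon)+2\alpha(1-\alpha)\varepsilon$. The case $\varepsilon=0$ is trivial. For your $\R^2$ embedding, what is needed is $\|(0,1)\|_*=1$. A norm that restricts to $|\cdot|$ on the response axis can still have $\|(0,1)\|_*>1$.
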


Let $F^*\in\mathcal M(\R)$ be the data generating distribution of the noise $e$.
In the fixed design framework, the (Wasserstein) in-sample risk is
\begin{align}
    \widehat{J}_N  :
    &= \sup_{F \in \mathbb B_p\left(\widehat{F}_N^{\rm ols},\,\varepsilon_N\right)} \E^{F} \left[\ell_\alpha(\overline{e}-\widehat{s}_N)\right] ,\notag
\end{align}
with $\widehat{s}_N$ being the optimal value of $s$ to the problem \eqref{eq:fixed-design}, and the out-of-sample risk is
$$
J_{\rm oos}:=\E^{F^*_{\overline{e}}}\left[\ell_\alpha(\overline{e}-\widehat{s}_N)\right] ,$$
where $F^*_{\overline{e}}$  is the distribution of the adjusted residuals $\overline{e}$.
\begin{theorem}
\label{prop:finite-sample-fixed}
 For $p\ge 1$, suppose Assumptions (i) and (ii) hold and   $\Gamma_0:=\E^{F^*}[|e|^s]<\infty$ for some $s>2$. Then  for any $\eta\in(0,1)$, we have
    \begin{align*}
        \P\left( J_{\rm oos} \leq \widehat{J}_N \right) \geq 1-\eta,
    \end{align*}
   Here, the radius is chosen as
    $$
    \varepsilon_N = \varepsilon_{1,N}+\frac{\alpha\vee(1-\alpha)}{\alpha\wedge(1-\alpha)}(\varepsilon_{2,N}+\varepsilon_{3,N}),
    $$
    where $\varepsilon_{1,N}:= \varepsilon_N(\eta/3) =  c_\alpha  \log(2N+1)^{{1}/{s}}/{\sqrt{N}}$ with $c_\alpha $ defined by \eqref{eq:c-alpha}, $\varepsilon_{2,N}=3 \Gamma_0^{1/s}\sqrt{d/N}/\eta $ and $\varepsilon_{3,N}=\Gamma_0^{1/s}\sqrt{3 \boldsymbol{c}^\top(\boldsymbol{X}^\top\boldsymbol{X})^{-1}\boldsymbol{c}/\eta} = O(1/\sqrt{N})$ as $N\to\infty$.
\end{theorem}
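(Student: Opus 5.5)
The plan is to reduce the fixed-design statement to the random-design guarantee (Theorem~\ref{prop:finite-sample}) applied to the one-dimensional i.i.d.\ noise sample, plus a perturbation argument for the OLS-induced shift. Write $z_i = y_i - \betaols^\top \boldsymbol{x}_i = e_i - (\betaols-\boldsymbol{\beta})^\top\boldsymbol{x}_i$, and let $\widehat{F}_N^{e}=\frac1N\sum_i\delta_{e_i}$ be the (unobservable) empirical distribution of the i.i.d.\ errors $e_i\sim F^*$. The target $\overline{e} = e + (\boldsymbol{\beta}-\betaols)^\top\boldsymbol{c}$ is, conditionally on $\betaols$, the noise shifted by a scalar $\delta:=(\boldsymbol{\beta}-\betaols)^\top\boldsymbol{c}$. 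I would bound $J_{\rm oos}$ by the worst case over a ball around $\widehat F_N^{e}$ shifted by $\delta$, and then show this ball is contained in $\mathbb B_p(\widehat F_N^{\rm ols},\varepsilon_N)$.

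First, I apply Theorem~\ref{prop:finite-sample} with $d=0$ (no covariates, only an intercept) to the sample $\{e_i\}$ at confidence level $\eta/3$. With probability at least $1-\eta/3$, uniformly in $s$,
\[
\E^{F^*}[\ell_\alpha(e-s)]\le\sup_{F\in\mathbb B_p(\widehat F_N^{e},\varepsilon_{1,N})}\E^F[\ell_\alpha(e-s)].
\]
The constant $c_\alpha$ with $d$ is still an upper bound, since it is monotone in $d$. Because the bound holds uniformly in $s$, I can substitute $s=\widehat s_N-\delta$ even though $\delta$ is random, and translation invariance of Wasserstein balls converts this into a bound on $J_{\rm oos}=\E[\ell_\alpha(\overline e-\widehat s_N)]$ by a sup over the ball around the shifted empirical law $\widehat F_N^{e}(\cdot-\delta)$.

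Second, I control the Wasserstein distance between the shifted errors and the residuals. Using the identity coupling $e_i+\delta \leftrightarrow z_i$, we get $W_p\le W_\infty\le \max_i|(\betaols-\boldsymbol\beta)^\top(\boldsymbol{x}_i-\boldsymbol c)|$, or more crudely $W_p$ is bounded by an average term plus $|\delta|$. Two error terms arise. The first is $\|\boldsymbol X(\betaols-\boldsymbol\beta)\|/\sqrt N=\|P e\|/\sqrt N$ with $P$ the hat matrix; since $\E\|Pe\|^2=d\,\sigma^2\le d\,\Gamma_0^{2/s}$, Markov's inequality gives, with probability at least $1-\eta/3$, a bound of order $\Gamma_0^{1/s}\sqrt{d/N}/\eta$, i.e.\ $\varepsilon_{2,N}$. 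Here I would use the $W_1$/average-coupling bound, or else accept the $p$-norm via a similar moment bound. The second is $|\delta|$ with $\mathrm{Var}(\delta)=\sigma^2\boldsymbol c^\top(\boldsymbol X^\top\boldsymbol X)^{-1}\boldsymbol c$; Chebyshev's inequality at level $\eta/3$ gives $\varepsilon_{3,N}$. A union bound then yields total probability at least $1-\eta$.

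Third, I enlarge the radius. The factor $\frac{\alpha\vee(1-\alpha)}{\alpha\wedge(1-\alpha)}$ suggests the transfer is not a pure triangle inequality in $W_p$. Instead it is a Lipschitz-type comparison: a perturbation of size $\varepsilon'$ of the center changes the worst-case check loss by at most $\max\{\alpha,1-\alpha\}\varepsilon'$, while enlarging the radius by $\varepsilon'$ raises it by at least $\min\{\alpha,1-\alpha\}\varepsilon'$ (as in Theorem~\ref{thm:quantile}, where $c_{\alpha,p}\ge\min\{\alpha,1-\alpha\}$ after optimizing $s$). I expect this step to be the main obstacle. Since $\widehat s_N$ is optimal for the residual problem and not for an arbitrary $s$, I would work at the level of the optimized values, using the reformulation \eqref{eq:fixed-ref}, which makes the dependence on $\varepsilon$ exactly linear with slope $c_{\alpha,p}$. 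I would combine this with $|\E^{G}\ell_\alpha-\E^{G'}\ell_\alpha|\le \max\{\alpha,1-\alpha\}W_1(G,G')$ to conclude $J_{\rm oos}\le\widehat J_N$ on the intersection event.
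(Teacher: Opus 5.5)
Your reduction and probability bookkeeping in steps 1--2 are sound. Absorbing $\delta=(\boldsymbol{\beta}-\betaols)^\top\boldsymbol{c}$ by translating the center is legitimate, because the bound from Theorem~\ref{prop:finite-sample} is uniform in $s$. It is a tidy substitute for the paper's separate Lipschitz bound on the population side, and both routes give the same budget $A=(\alpha\vee(1-\alpha))\bigl(\frac1N\sum_i|(\boldsymbol{\beta}-\betaols)^\top\boldsymbol{x}_i|+|\delta|\bigr)$. There are two smaller points. First, transferring a \emph{worst case} between centers needs more than $|\E^G\ell_\alpha-\E^{G'}\ell_\alpha|\le(\alpha\vee(1-\alpha))W_1(G,G')$. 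You must map each $G$ in the ball around $\widehat F_N^{e}(\cdot-\delta)$ to some $G'$ in the ball of the same radius around $\widehat F_N^{\rm ols}$, by moving each atom's transported mass along with the atom; this is what the paper's finite-reduction lemma does. Second, the alternative of bounding $W_p$ between the centers and using the triangle inequality works only for $p\le2$. Your Markov bound on $\|Pe\|/\sqrt N$ controls an $\ell_2$-average of the atom shifts, not $W_p$ for $p>2$ or $W_\infty$.

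The genuine gap is step 3. After steps 1--2 you must show
\[
\sup_{F\in\mathbb B_p(\widehat F_N^{\rm ols},\varepsilon_{1,N})}\E^F[\ell_\alpha(\overline e-\widehat s_N)]+A\le\sup_{F\in\mathbb B_p(\widehat F_N^{\rm ols},\varepsilon_N)}\E^F[\ell_\alpha(\overline e-\widehat s_N)]
\]
at the single intercept $\widehat s_N$. For $p>1$, by \eqref{eq:beta_0-cr}, $\widehat s_N$ is the minimizer for radius $\varepsilon_N$ but not for $\varepsilon_{1,N}$. The reformulation \eqref{eq:fixed-ref} describes only $\min_s$ of the worst-case loss. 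At radius $\varepsilon_{1,N}$ it therefore gives $\inf_{\overline s}\E^{\widehat F_N^{\rm ols}}[\ell_\alpha(\overline e-\overline s)]+c_{\alpha,p}\varepsilon_{1,N}$ as a \emph{lower} bound for the left-hand supremum, which is the wrong direction. Repairing the intercept mismatch by Lipschitz continuity in $s$ costs $(\alpha\vee(1-\alpha))$ times the intercept shift, and this can exceed the gain $c_{\alpha,p}(\varepsilon_N-\varepsilon_{1,N})$. For example, with $p=2$ and $\alpha=0.9$ the cost is $1.2$ against a gain of $0.3$ per unit of radius.

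What is missing is a growth bound at a fixed $s$, proved by an explicit perturbation. For any $F$ and any $s$, let $e'=e+\frac{A}{\alpha}\mathbbm{1}_{\{e\ge s\}}-\frac{A}{1-\alpha}\mathbbm{1}_{\{e<s\}}$. Then $e'-s$ has the same sign as $e-s$, so $\E[\ell_\alpha(e'-s)]=\E[\ell_\alpha(e-s)]+A$ exactly. Also $|e'-e|\le A/(\alpha\wedge(1-\alpha))$ almost surely, so the law of $e'$ is within $W_p$-distance $A/(\alpha\wedge(1-\alpha))$ of $F$ for every $p\in[1,\infty]$. Apply this to every $F\in\mathbb B_p(\widehat F_N^{\rm ols},\varepsilon_{1,N})$ and use the triangle inequality for $W_p$. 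This gives the display with radius $\varepsilon_{1,N}+A/(\alpha\wedge(1-\alpha))$, which is exactly where the factor $\frac{\alpha\vee(1-\alpha)}{\alpha\wedge(1-\alpha)}$ comes from. Your Markov and Chebyshev bounds, each at level $\eta/3$, then finish the argument.
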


\cite{qi2021robust}  give a similar finite sample guarantee, which  achieves the $O(N^{-1/2})$ radius order matching that of Theorem \ref{prop:finite-sample-fixed},  under Assumptions (i) and (ii) and   the prohibitive condition of exponential moments: $\E^{F^*}[\exp(|e|^a)]<\infty$ for some $a>1$ ($p=1$), which is substantially stronger than our $(2+\delta)$-th moment assumption. Furthermore, their approach suffers from ``curse of the order $p$", that is, the decay rate of the radius degrades severely as $p$ increases (see e.g. Theorem 1 of \citet{wu2022generalization}), while our rate remains $O(N^{-1/2})$ uniformly for all $p\geq 1$.

\section{Numerical experiments}\label{sec:numerical}
This section provides a comprehensive empirical evaluation of DR-QR. The experiments are designed to illustrate three key theoretical insights developed earlier:
(i) the implicit regularization induced by Wasserstein robustness, including the distinct roles of the Wasserstein order $p$ and the norm in shaping the ambiguity geometry,
(ii) the intercept-adjustment mechanism that differentiates DR-QR from classical regularized QR, and
(iii) the generalization bounds.

\subsection{Loss based performance evaluation} \label{experiment:1}
In this experiment, we evaluate the performance of the DR-QR model across different training sample sizes by computing the average quantile loss on a fixed test set.

We use simulated data generated from a linear regression model $Y_i = \boldsymbol{\beta}^\top \mathbf{X}_i + e_i$ for $i=1,\ldots,N$, where $\mathbf{X}_i \in \mathbb{R}^d$ are $i.i.d.$~features from a multivariate normal distribution, the noise term $e_i\sim \mathcal{N}(0,\sigma^2)$ is independent of $\mathbf{X}_i$, and the regression coefficient vector $\boldsymbol{\beta} \in \mathbb{R}^d$ follows a random sparse structure: each component is independently activated with probability $0.2$ and assigned a random sign. To avoid degenerate cases, at least one component of $\boldsymbol{\beta}$ is ensured to be nonzero.

Throughout the experiments, we fix the quantile level to $\alpha=0.7$, the dimension to $d=30$ and the noise level to $\sigma=5$. The
test size is fixed at $10^4$ in order to approximate the expectation under the true data-generating distribution. The training sample size $N$ ranges from $20$ to $2000$ on a logarithmic scale. This range allows us to examine the robustness of the model in small-sample regimes as well as its asymptotic convergence behavior as the sample size increases.

We evaluate the DR-QR model under various configurations, varying the choice of the Wasserstein order $p$ and the norm used in the metric.
We use the standard QR as a benchmark for comparison.
For each training size $N$, the Wasserstein radius $\varepsilon$ is scaled proportionally to $N^{-1/2}$, in accordance with our theoretical results in Theorem \ref{prop:finite-sample}.

\begin{figure}[htbp]

    \caption{Out-of-sample quantile loss under different Wasserstein balls.}
    \label{fig:geometry}
     \begin{center}
    \begin{subfigure}[t]{0.48\textwidth}
   \includegraphics[width=\textwidth]{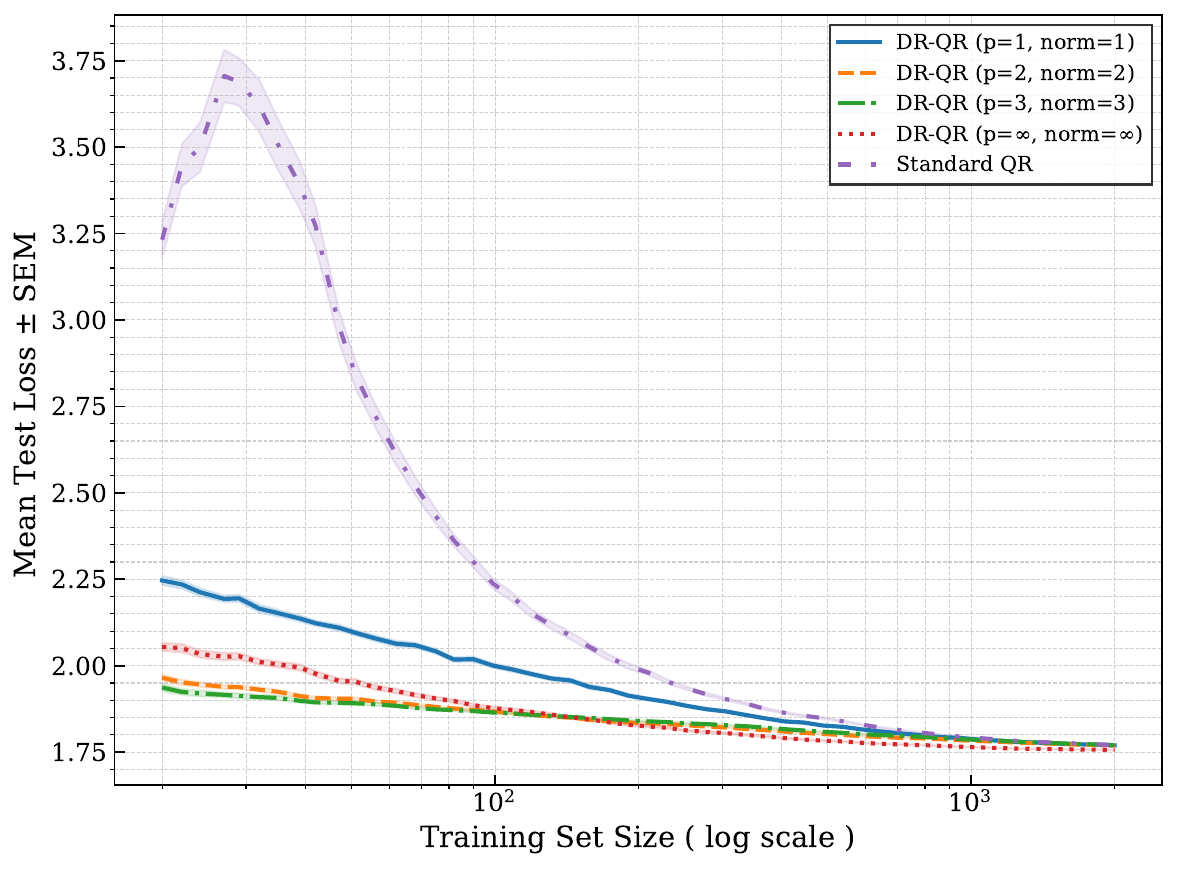}
        \caption{Different $p$ and norms.}
        \label{fig:geometry-a}
    \end{subfigure}
    \hfill
    \begin{subfigure}[t]{0.48\textwidth}

     \includegraphics[width=\textwidth]{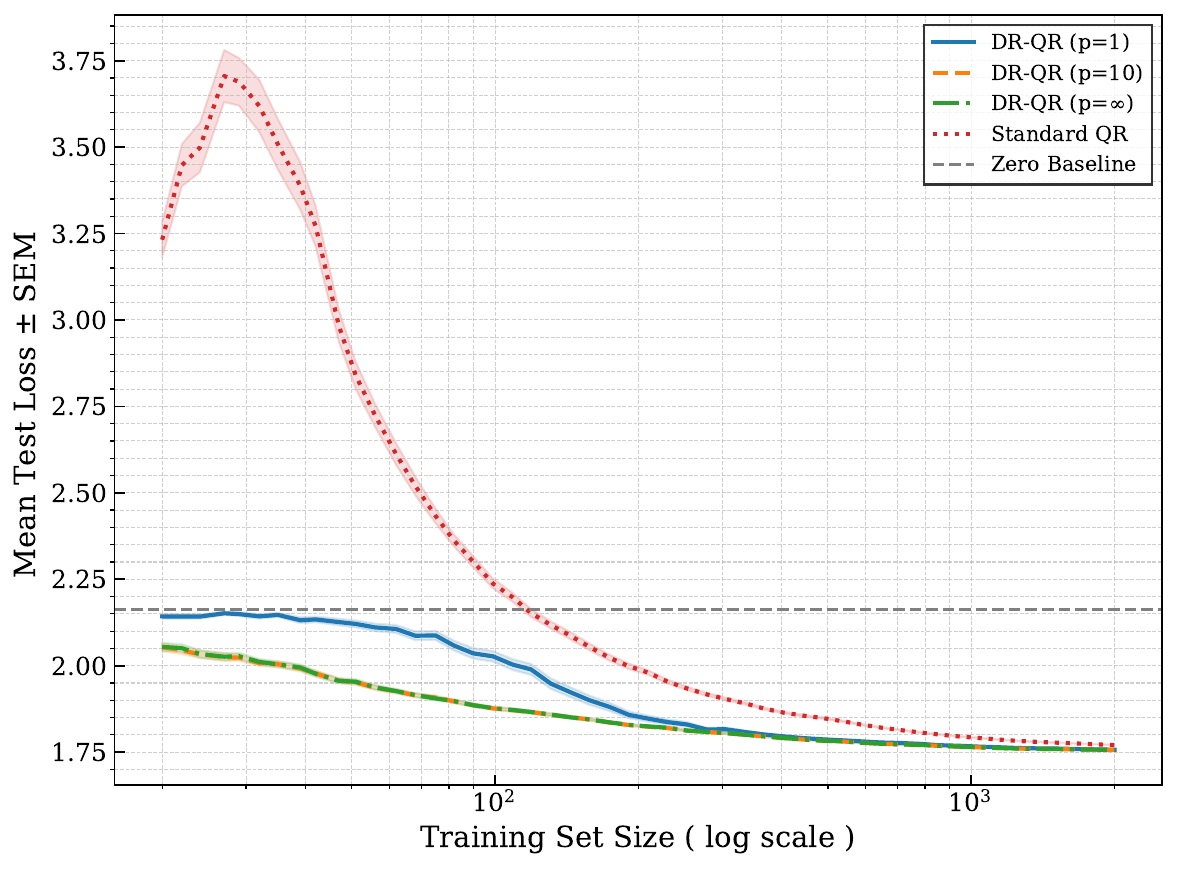}
        \caption{Varying $p$ under  $\ell_\infty$ norm.}
        \label{fig:geometry-b}
    \end{subfigure}
    \vspace{0.5em}
    \end{center}
    {\it Notes.} Shaded regions represent $\pm$ one standard error of the mean (SEM) across 100 independent runs. The Zero Baseline corresponds to the trivial estimator $\boldsymbol{\beta}=\mathbf{0}$. In Panel (a), $\texttt{norm}=k$ denotes the $\ell_k$-norm.
\end{figure}
Figure~\ref{fig:geometry}(a) compares the out-of-sample loss of DR-QR
under different Wasserstein orders $p$ and norms.
All robust specifications improve over standard QR,
particularly in small-sample regimes, while exhibiting stable convergence as $N$ increases.
Figure~\ref{fig:geometry}(b) focuses on the role of the Wasserstein order $p$
under a fixed $\ell_\infty$ norm.
The choice $p=1$ yields the poorest performance. This is consistent with the geometry of the ambiguity set:  the $\ell_\infty$-based Wasserstein ball is the largest among all $\ell_q$-induced Wasserstein balls. When combined with $p=1$, this yields the strongest effective adversary and hence the most severe shrinkage effect.
As a result, the solution is driven toward the trivial estimator $\boldsymbol{\beta}=\mathbf{0}$, which explains why the corresponding loss closely tracks the zero baseline when $N$ is small.
In contrast, $p=\infty$ produces a smaller effective ambiguity set,
mitigating this degeneracy and yielding improved performance.

Under the $\ell_\infty$ norm, the induced transport geometry is the most conservative among the norm choices considered here.
When combined with $p=1$, this yields the strongest effective adversary and hence the most severe shrinkage effect.
As a result, the solution is driven toward the trivial estimator $\boldsymbol{\beta}=\mathbf{0}$, which explains why the corresponding loss closely tracks the zero baseline when $N$ is small.

\subsection{Comparison with regularized quantile regression}
In this section, we compare our Wasserstein DR-QR model with the regularized quantile regression model,\footnote{To align with the DRO formulation, we consider the regularized quantile regression model with penalty $\|(\boldsymbol{\beta},-1)\|_{*}$. The standard regularized quantile regression instead employs $\|\boldsymbol{\beta}\|_{*}$ as the regularizer.
Under $\ell_p$ norms, these two penalties are comparable up to constants, and therefore induce a similar scale of regularization on the slope coefficient $\boldsymbol{\beta}$.}

\[
\inf_{\boldsymbol{\beta}\in\R^d, s\in\R}
\;
\frac{1}{N}\sum_{i=1}^N \ell_{\alpha}\!\left(Y_i - \boldsymbol{\beta}^\top \mathbf{X}_i - s\right)
+
\lambda \|\overline{\boldsymbol{\beta}}\|_{*}.
\]
The R-QR formulation is well established in the literature and is commonly regarded as an effective regularization approach. In contrast to DR-QR, the R-QR model does not incorporate an explicit adjustment of the intercept induced by distributional robustness. To facilitate a meaningful comparison, we align the penalty coefficient
$\lambda$ and the penalty norm with the modeling choices in the DRO formulation, so that both formulations are evaluated on a common scale.

The experimental setup largely follows that of Section~\ref{experiment:1},
with the exception that the true coefficient vector $\boldsymbol{\beta}$
is generated from $U[1,5]^d$, resulting in a larger magnitude of
$\|\overline{\boldsymbol{\beta}}\|_{*}$. We set $\alpha=0.9$, $p=2$, and the $\ell_2$-norm.
Figure~\ref{fig:4} presents the out-of-sample loss.
\begin{figure}[htbp]
    \centering
   \caption{Test quantile loss versus Wasserstein radius for DR-QR and R-QR across different training sample sizes.  }
    \begin{subfigure}[b]{0.32\textwidth}
        \centering
        \includegraphics[width=\textwidth]{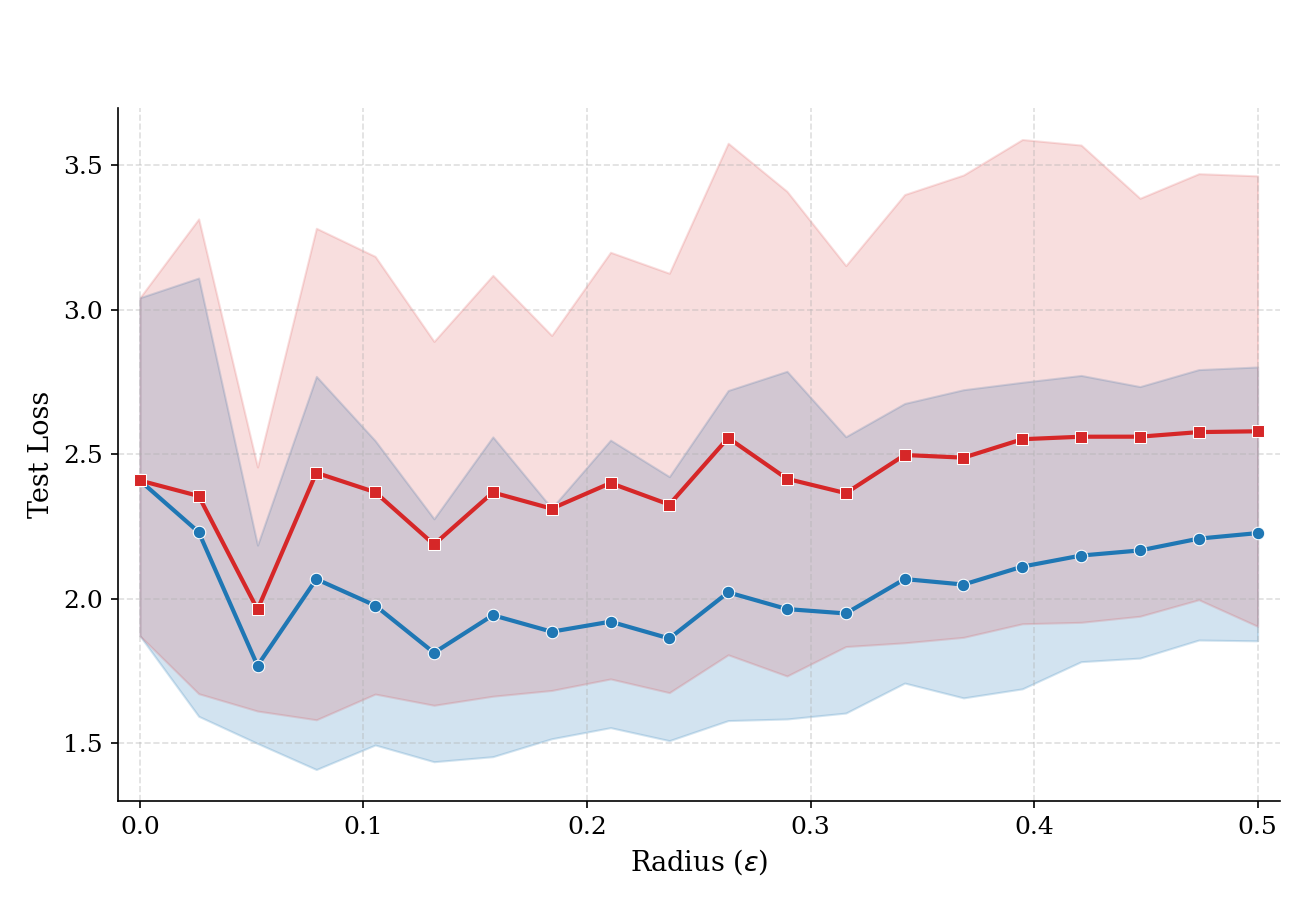}
        \caption{$N=50$}
    \end{subfigure}
    \hfill
    \begin{subfigure}[b]{0.32\textwidth}
        \centering
        \includegraphics[width=\textwidth]{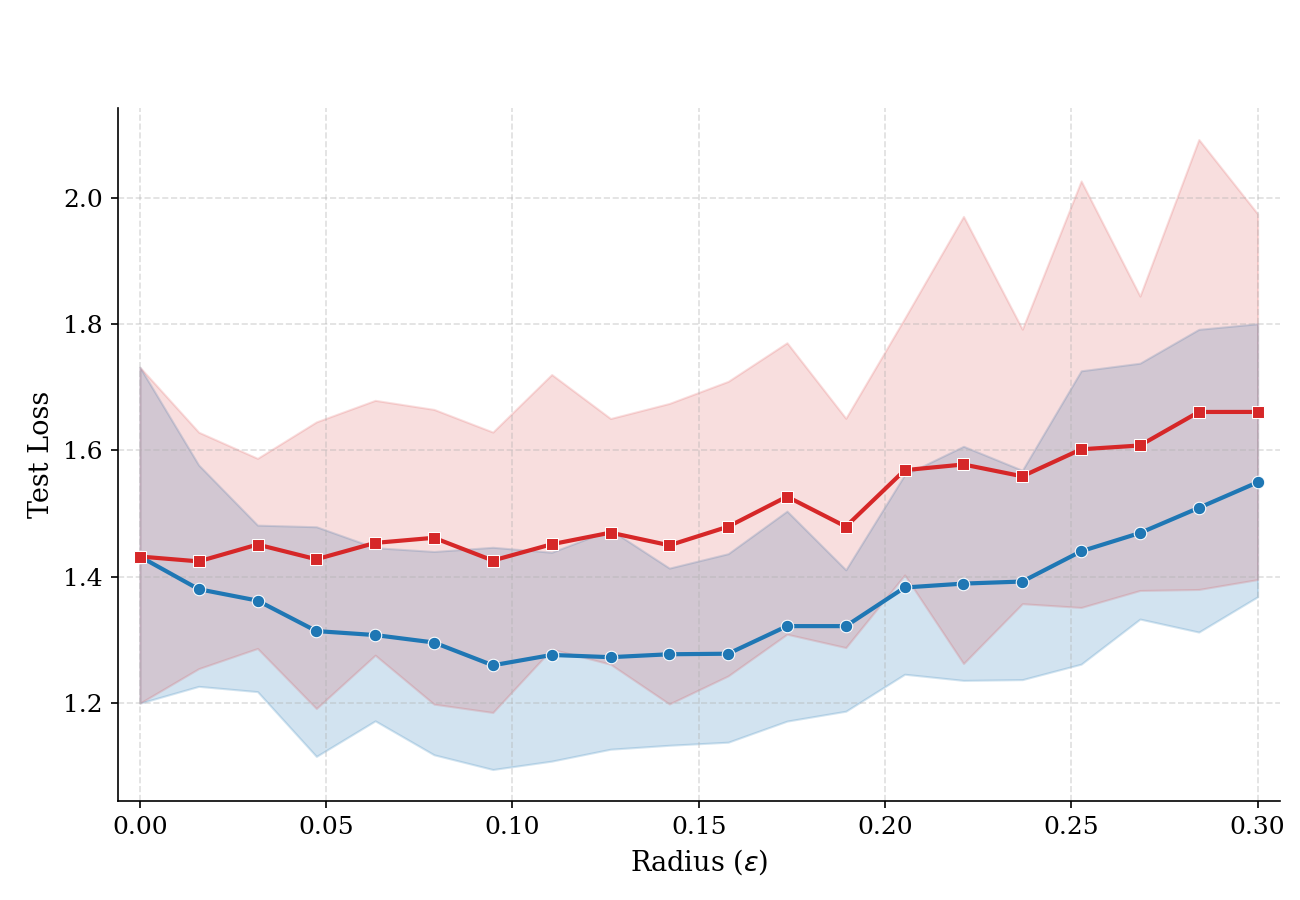}
        \caption{$N=100$}
    \end{subfigure}
    \hfill
    \begin{subfigure}[b]{0.32\textwidth}
        \centering
        \includegraphics[width=\textwidth]{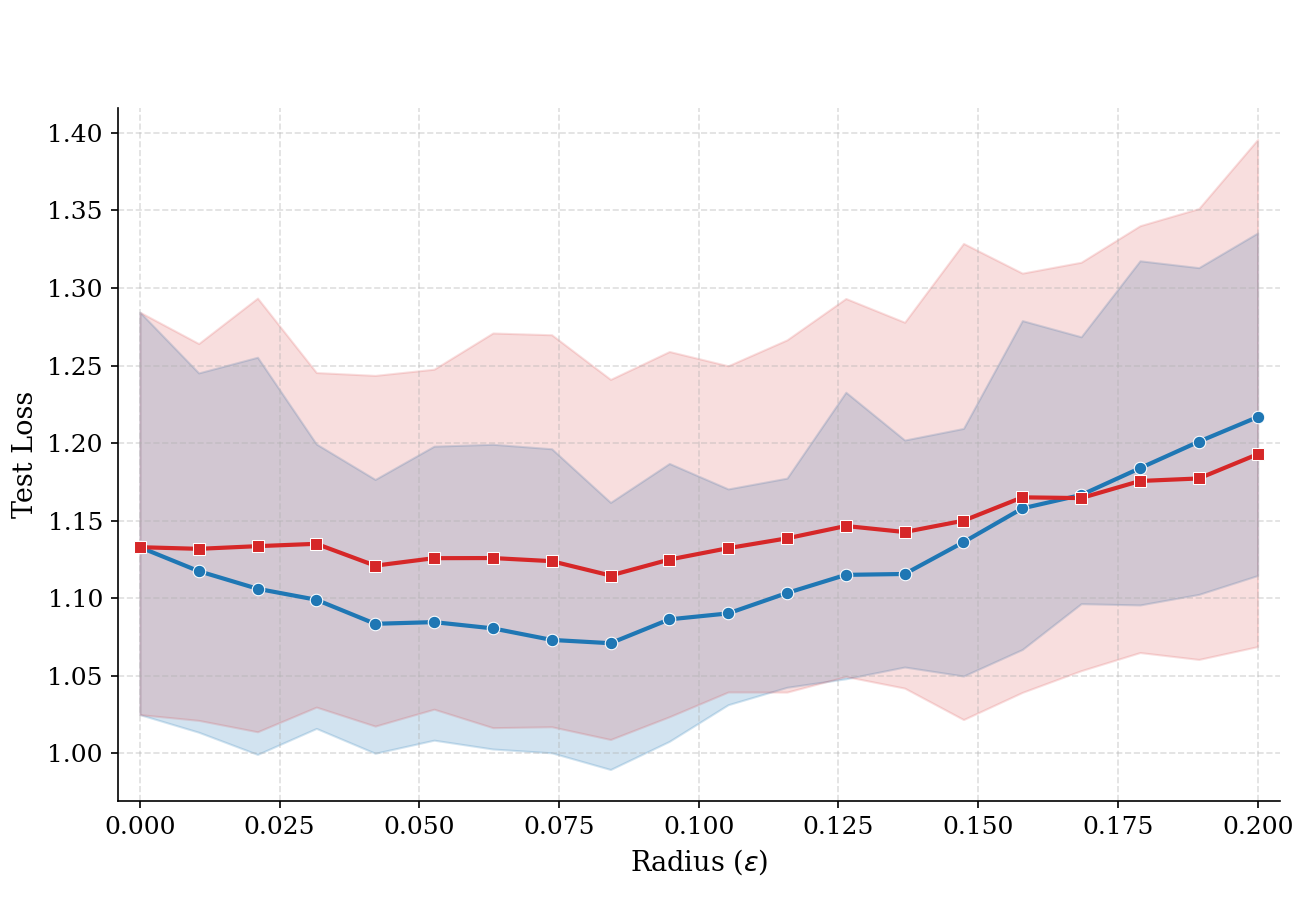}
        \caption{$N=200$}
    \end{subfigure}

    \vspace{0pt}

    \begin{center}

\includegraphics[width=0.9\textwidth]{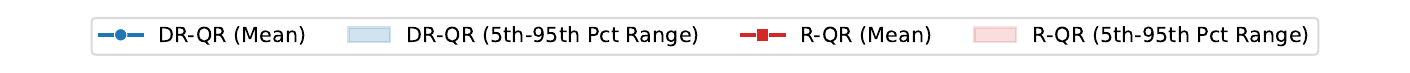}
   \label{fig:4} \end{center}
{{\it Notes.} The shaded regions represent the $5$th--$95$th percentile range across $100$ independent runs.}

\end{figure}

As shown in Figure~\ref{fig:4}, when the sample size and dimension are comparable ($N=50,100$), the DR-QR model exhibits superior performance, both in terms of the optimal test loss attained over the radius grid and in overall behavior across different radii.  When $N=200$, the gap narrows, yet DR-QR still attains lower average loss and uniformly smaller upper-tail risk.
These findings   align with the mechanism of distributional robustness. Under asymmetric loss functions, when there is limited confidence in the underlying data-generating distribution, the response estimate should be adjusted toward higher values under asymmetric loss functions that impose a larger penalty on positive residuals. While such adjustments are typically heuristic and require separate tuning of penalty strength and intercept correction, our Wasserstein-DRO formulation provides a unified, single-parameter mechanism. By selecting an optimal radius $\varepsilon$, the model simultaneously calibrates regularization strength and intercept adjustment in a principled manner.

To further illustrate this, we compare  the   estimates of the intercept term $s$ to illustrate the mechanism through which DR-QR achieves robustness.

Specifically, we investigate multiple quantile levels $\alpha \in \{0.2, 0.4, 0.7, 0.9\}$ with training sample sizes $N$ ranging from $10$ to $200$ and the Wasserstein order $p \in \{1.5, 2, 5, \infty\}$. To align with theoretical statistical convergence rates, the Wasserstein radius is scaled as $\varepsilon = N^{-1/2}$.
Table \ref{tab:intercept_comparison} compares the theoretical noise quantile $Q_{\alpha}(e) = \sigma\Phi^{-1}(\alpha)$ with the estimates obtained from the standard SAA, the regularized quantile regression (R-QR), and our proposed DR-QR model.

\begin{table}[ht]
    \centering
    \begin{threeparttable}
        \caption{Comparison of intercept estimates across different models and $p$ values.}
        \label{tab:intercept_comparison}
        \setlength{\tabcolsep}{4pt}
        \footnotesize
        \begin{tabular}{@{}c c c cc cc cc cc c@{}}
            \toprule
            & & & \multicolumn{2}{c}{$p=1.5$} & \multicolumn{2}{c}{$p=2.0$} & \multicolumn{2}{c}{$p=5.0$} & \multicolumn{2}{c}{$p=\infty$} & \\
            \cmidrule(lr){4-5} \cmidrule(lr){6-7} \cmidrule(lr){8-9} \cmidrule(lr){10-11}
            $\alpha$ & $N$ & Theoretical & DR-QR & R-QR & DR-QR & R-QR & DR-QR & R-QR & DR-QR & R-QR & SAA \\
            \midrule
            \multirow{6}{*}{0.2} & 10 & \multirow{6}{*}{-4.2081} & \textbf{-3.2083} & -1.1518 & \textbf{-2.8537} & -0.6627 & \textbf{-2.5860} & -0.6641 & \textbf{-2.4117} & -0.6376 & - \\
                                & 20 &                           & \textbf{-3.9560} & -1.9615 & \textbf{-3.7011} & -1.5974 & \textbf{-3.2241} & -1.3421 & \textbf{-3.0406} & -1.2994 & - \\
                                & 30 &                           & \textbf{-4.3863} & -2.5129 & \textbf{-4.2028} & -2.2376 & \textbf{-3.7963} & -2.0453 & \textbf{-3.5931} & -1.9663 & -0.9684 \\
                                & 50 &                           & \textbf{-4.8831} & -3.2289 & \textbf{-4.8649} & -3.1424 & \textbf{-4.5812} & -3.0508 & \textbf{-4.4221} & -3.0056 & -2.7752 \\
                                & 100 &                          & -5.1884 & \textbf{-3.9428} & -5.1942 & \textbf{-3.9140} & -4.9983 & \textbf{-3.8721} & -4.9176 & \textbf{-3.8805} & -3.8034 \\
                                & 200 &                          & -4.8952 & \textbf{-4.0065} & -4.9002 & \textbf{-3.9896} & -4.7835 & \textbf{-3.9867} & -4.7151 & \textbf{-3.9817} & -3.9640 \\
            \midrule
            \multirow{6}{*}{0.4} & 10 & \multirow{6}{*}{-1.2667} & \textbf{-1.1211} & -0.4790 & \textbf{-1.1061} & -0.4560 & \textbf{-1.0813} & -0.4415 & \textbf{-1.0972} & -0.4622 & - \\
                                & 20 &                           & \textbf{-1.0441} & -0.4007 & \textbf{-0.9869} & -0.3373 & \textbf{-0.9736} & -0.3320 & \textbf{-0.9714} & -0.3352 & - \\
                                & 30 &                           & \textbf{-1.3025} & -0.7108 & \textbf{-1.3204} & -0.7231 & \textbf{-1.2959} & -0.7050 & \textbf{-1.3115} & -0.7253 & -0.2466 \\
                                & 50 &                           & -1.5066 & \textbf{-1.0281} & -1.5055 & \textbf{-1.0236} & -1.4983 & \textbf{-1.0219} & -1.4891 & \textbf{-1.0156} & -1.0646 \\
                                & 100 &                          & -1.5114 & \textbf{-1.1616} & -1.5133 & \textbf{-1.1611} & -1.5095 & \textbf{-1.1614} & -1.5047 & \textbf{-1.1591} & -1.1634 \\
                                & 200 &                          & -1.4959 & \textbf{-1.2492} & -1.5005 & \textbf{-1.2522} & -1.4952 & \textbf{-1.2498} & -1.4959 & \textbf{-1.2525} & -1.2311 \\
            \midrule
            \multirow{6}{*}{0.7} & 10 & \multirow{6}{*}{2.6220}  & \textbf{1.7742} & 0.4982 & \textbf{1.7442} & 0.4378 & \textbf{1.5770} & 0.3297 & \textbf{1.5014} & 0.2968 & - \\
                                & 20 &                           & \textbf{2.4369} & 1.1569 & \textbf{2.3664} & 1.0477 & \textbf{2.2482} & 0.9917 & \textbf{2.1940} & 0.9768 & - \\
                                & 30 &                           & \textbf{2.8659} & 1.6621 & \textbf{2.8126} & 1.5684 & \textbf{2.6928} & 1.4983 & \textbf{2.6435} & 1.4875 & 0.9821 \\
                                & 50 &                           & 3.1859 & \textbf{2.1952} & 3.1831 & \textbf{2.1678} & 3.1374 & \textbf{2.1698} & \textbf{3.0815} & 2.1431 & 1.9727 \\
                                & 100 &                          & 3.1946 & \textbf{2.4575} & 3.2091 & \textbf{2.4559} & 3.1623 & \textbf{2.4467} & 3.1455 & \textbf{2.4517} & 2.4242 \\
                                & 200 &                          & 3.0435 & \textbf{2.5227} & 3.0557 & \textbf{2.5245} & 3.0254 & \textbf{2.5212} & 3.0109 & \textbf{2.5227} & 2.5033 \\
            \midrule
            \multirow{6}{*}{0.9} & 10 & \multirow{6}{*}{6.4078}  & \textbf{4.8489} & 1.8347 & \textbf{4.1284} & 0.9954 & \textbf{3.4085} & 1.0228 & \textbf{2.7131} & 0.6995 & - \\
                                & 20 &                           & \textbf{6.5701} & 3.5274 & \textbf{5.9189} & 2.7425 & \textbf{4.5551} & 2.0534 & \textbf{4.0859} & 2.0038 & - \\
                                & 30 &                           & \textbf{7.3579} & 4.4707 & \textbf{6.6430} & 3.6169 & \textbf{5.3259} & 2.9363 & \textbf{4.7642} & 2.7555 & 1.7933 \\
                                & 50 &                           & 7.8204 & \textbf{5.1333} & \textbf{7.2081} & 4.4360 & \textbf{6.0842} & 3.9605 & \textbf{5.5790} & 3.8018 & 3.2031 \\
                                & 100 &                          & 7.8475 & \textbf{5.6065} & 7.6314 & \textbf{5.3989} & \textbf{6.9312} & 5.2591 & \textbf{6.6060} & 5.2252 & 5.1130 \\
                                & 200 &                          & 7.8167 & \textbf{6.1880} & 7.7276 & \textbf{6.1249} & 7.2611 & \textbf{6.0718} & 7.0177 & \textbf{6.0374} & 6.0040 \\
            \bottomrule
        \end{tabular}
        \begin{tablenotes}
            \footnotesize
            \item \textit{Note:} SAA estimates are denoted by `-' for $N < d$  due to non-identifiability. Estimates with smaller absolute errors relative to the theoretical values are highlighted in bold.
        \end{tablenotes}
    \end{threeparttable}
\end{table}
The numerical results reveal distinct behaviors in intercept estimation across the three models. Specifically, SAA tends to significantly underestimate the intercept at upper quantile levels and overestimate it at lower quantile levels when the sample size is limited. While the R-QR model mitigates these biases through regularization, the sign of the bias remains unchanged relative to the theoretical quantile. In contrast, the DR-QR model exhibits an opposing trend: it proactively shifts the intercept upward at upper quantile levels and downward at lower quantile levels. Although this adjustment may lead to larger absolute estimation errors, it consistently results in lower test loss due to the asymmetric nature of the quantile loss function. Moreover, as $p$ increases, the penalty coefficient in R-QR decreases, leading to less accurate intercept estimates, whereas the intercept correction in DR-QR effectively compensates for this insufficient penalization.

\subsection{Numerical demonstration of generalization bounds}
We next provide a numerical illustration of the finite-sample generalization bounds derived in Theorem~\ref{prop:finite-sample}. In this experiment, we follow the settings of  \cite{shafieezadeh2019regularization}. In contrast to the loss-based evaluation, our primary focus here is on the behavior of the Wasserstein radius $\varepsilon$. For each candidate radius in the search space $\mathcal{E}$, we record three quantities: the radius selected by 5-fold cross-validation, the optimal radius that minimizes the test loss, and the smallest radius for which the empirical generalization inequality $J_{\rm oos}\le \widehat{J}_N$ holds with at least $95\%$ frequency across trials. The first two quantities are averaged over all trials, while the third quantity is determined by identifying the minimal radius in $\mathcal{E}$ that satisfies the empirical validity condition.

Throughout the experiment, the search space of the Wasserstein radius is chosen as
$
\mathcal{E}
=
\left\{ a \cdot 10^{-b} : a \in \{1,\ldots,10\},\; b \in \{1,2,3\} \right\}
\;\cup\;
\{1.1, 1.2, \ldots, 2.0\}.
$
We consider different training sample sizes
$
N \in \{10,\ldots,90\} \cup \{100,\ldots,900\} \cup \{1{,}000,\ldots,10{,}000\},
$
and fix the test sample size to $10^4$ in order to approximate expectations under the true data-generating distribution. Other settings largely follow that of the previous subsection, including the data-generating process, the quantile level $\alpha$, and the ambient dimension $d$. In particular, we fix the noise term to $e_i \sim \mathcal{N}(0,1)$ and the regression coefficient vector to $\boldsymbol{\beta}=\mathbf{1}/\sqrt{d}$. We consider Wasserstein distance under the $\ell_2$ norm with $p=2$. For each training size $N$, we repeat the experiment over $100$ independent trials.

By varying the training sample size $N$, this experiment allows us to examine how these radii evolve with increasing data availability. In particular, we can check whether the empirically valid bound radius exhibits the same decay behavior as the cross-validated and oracle radii, as suggested by the theoretical scaling laws. The results are summarized in Figure~\ref{fig:3}, which shows that all three radii decrease at a rate approximately proportional to $N^{-1/2}$, thereby corroborating our theoretical findings.
\begin{figure}[htbp]
    \centering
    \includegraphics[width=0.6\textwidth]{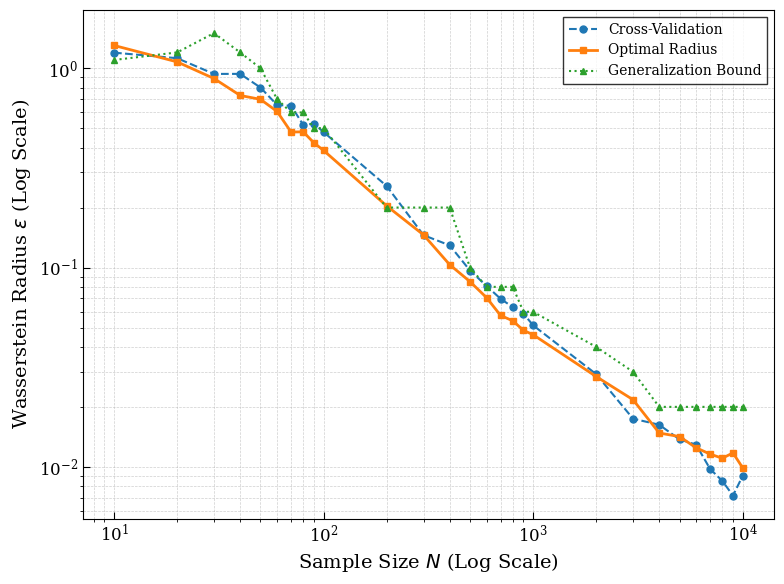}
    \caption{Wasserstein radius versus training sample size for cross-validated, optimal, and empirically valid bounds.}
    \label{fig:3}
\end{figure}

\section{Conclusion}

This paper develops a unified theoretical and computational framework for DR-QR under general type-$p$ Wasserstein uncertainty. We    derive a closed-form expression for the worst-case quantile regression loss for any $p\ge 1$, leading to transparent convex regularized reformulations that strictly extend existing DRO approaches—particularly those relying on fixed design or a shared coefficient vector across quantile levels. Our analysis uncovers sharp qualitative differences between $p=1$ and $p>1$, clarifying how the order $p$ affects only the intercept while the underlying norm in the Wasserstein metric influences both intercept and slope.  Beyond establishing this equivalence, we further show an impossibility result that the check loss is  the only convex loss function for which the distributionally robust objective admits an equivalent regularized formulation.
Finally, we establish finite-sample guarantees of order $O(N^{-1/2})$ under mild moment conditions, overcoming the curse of dimensionality. Overall, our results offer a complete and versatile foundation for robust quantile regression, with potential extensions to nonlinear models, high-dimensional settings, and multivariate quantile structures.

\setcounter{equation}{0}
\setcounter{lemma}{0}
\renewcommand{\theequation}{A\arabic{equation}}
\renewcommand{\thelemma}{A\arabic{lemma}}

\begin{appendices}

\section{Proofs of the Main Results}
\subsection{Proofs of Section \ref{sec:2}}
\subsubsection{Proof of Theorem \ref{thm:quantile}}
To prove Theorem \ref{thm:quantile}, we first consider the inner problem of (\ref{OP}), that is,
\begin{align}\label{OP-1}
\sup_{F\in \mathbb B_p(F_0,\varepsilon)} \E^F \left[ \ell_{\alpha}(Y - \boldsymbol{\beta}^\top \mathbf{X}-s) \right].
\end{align}
By Theorem  5 of  \citet{mao2022model}, we can rewrite the problem of (\ref{OP-1}) as a worst case expected loss problem over an one-dimensional Wasserstein ball  immediately.

\begin{lemma}
    \label{projection}
    For $p\geq 1$, $\varepsilon\ge 0$ and  $F_0\in \mathcal M(\R^d)$, $\boldsymbol{\beta}\in\R^d$ and $s\in\R$, it holds that
    \begin{align}
        \sup_{F\in \mathbb B_p(F_0,\varepsilon)} \E^F \left[ \ell_{\alpha}(Y - \boldsymbol{\beta}^\top \mathbf{X}-s) \right] = \sup_{ G\in  \mathbb B_p(G_0, \varepsilon\|\overline{\boldsymbol{\beta}}\|_*)} \E^G \left[ \ell_{\alpha}(Z-s) \right],
    \end{align}
    where $G_0$ is the distribution of $Z = Y_0 - \boldsymbol{\beta}^\top \mathbf{X}_0$ with $(\mathbf{X}_0,Y_0)\sim F_0$.
\end{lemma}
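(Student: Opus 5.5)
The plan is to prove the two inequalities separately by working with the pushforward of $F$ under the linear map $T(\mathbf x,y)=y-\boldsymbol{\beta}^\top\mathbf x=-\overline{\boldsymbol{\beta}}^\top(\mathbf x,y)$, which is exactly the content of \citet[Theorem 5]{mao2022model}. Since the integrand $\ell_\alpha(Y-\boldsymbol{\beta}^\top\mathbf X-s)$ depends on $(\mathbf X,Y)$ only through $T(\mathbf X,Y)$, the left-hand side equals $\sup\{\E^G[\ell_\alpha(Z-s)]: G\in T_\#\mathbb B_p(F_0,\varepsilon)\}$. The lemma therefore reduces to the set identity
\[
T_\#\mathbb B_p(F_0,\varepsilon)=\mathbb B_p(G_0,\varepsilon\|\overline{\boldsymbol{\beta}}\|_*),
\]
or at least to the two inclusions needed for the suprema.

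\emph{Inclusion ``$\subseteq$''.} Take $F$ in the ball and a coupling $\Pi$ of $F$ and $F_0$ with cost at most $\varepsilon$. Push $\Pi$ forward under $(T,T)$ to get a coupling of $T_\#F$ and $G_0$. By the definition of the dual norm, $|T(\mathbf u)-T(\mathbf w)|=|\overline{\boldsymbol{\beta}}^\top(\mathbf u-\mathbf w)|\le\|\overline{\boldsymbol{\beta}}\|_*\|\mathbf u-\mathbf w\|$. Hence $\mathrm W_p(T_\#F,G_0)\le\|\overline{\boldsymbol{\beta}}\|_*\varepsilon$, and this works for $p=\infty$ as well, via the essential supremum. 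If the optimal coupling is not attained, an approximating coupling with cost $\varepsilon+\delta$ suffices, followed by $\delta\downarrow0$; alternatively one can use attainment of optimal couplings. This gives LHS $\le$ RHS.

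\emph{Inclusion ``$\supseteq$''.} This is the constructive step and the main obstacle. Pick a unit vector $\mathbf v$ with $\|\mathbf v\|=1$ and $-\overline{\boldsymbol{\beta}}^\top\mathbf v=\|\overline{\boldsymbol{\beta}}\|_*$; it exists by compactness of the unit sphere, and is essentially $\mathbf v_{\boldsymbol{\beta}}$. Given $G$ with $\mathrm W_p(G,G_0)\le\varepsilon\|\overline{\boldsymbol{\beta}}\|_*$, work on the atomless space and take a coupling $(Z_0,Z)$ with $Z_0\sim G_0$, $Z\sim G$ and $\|Z-Z_0\|_{L^p}\le\varepsilon\|\overline{\boldsymbol{\beta}}\|_*$. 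Because the space is atomless, we can realize $Z_0=T(\mathbf X_0,Y_0)$ with $(\mathbf X_0,Y_0)\sim F_0$; the cleanest route is to first draw $(\mathbf X_0,Y_0)$ and then draw $Z$ from the conditional law of $Z$ given $Z_0$, using an independent uniform variable. Now set
\[
(\mathbf X,Y)=(\mathbf X_0,Y_0)+\frac{Z-Z_0}{\|\overline{\boldsymbol{\beta}}\|_*}\mathbf v .
\]
Then $T(\mathbf X,Y)=Z_0+(Z-Z_0)=Z\sim G$, and the transport cost is $\|Z-Z_0\|_{L^p}/\|\overline{\boldsymbol{\beta}}\|_*\le\varepsilon$. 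Hence the law of $(\mathbf X,Y)$ lies in $\mathbb B_p(F_0,\varepsilon)$ and pushes forward to $G$, which gives LHS $\ge$ RHS. If we only have near-optimal couplings, the approximate version followed by a limit is enough for the suprema.

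The case $\overline{\boldsymbol{\beta}}=0$ cannot occur, since the last coordinate of $\overline{\boldsymbol{\beta}}$ is $-1$, so $\|\overline{\boldsymbol{\beta}}\|_*>0$. The case $\varepsilon=0$ is trivial. The delicate points are the measurable gluing, handled by the atomless-space or disintegration argument, and near-optimal couplings when $G$ has no optimal one. I expect to invoke \citet[Theorem 5]{mao2022model} directly for these, citing it as establishing exactly the projection identity above.
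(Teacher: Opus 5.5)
Your proposal is correct and follows the paper's route. The paper proves this lemma in one line by invoking the projection identity $T_\#\mathbb B_p(F_0,\varepsilon)=\mathbb B_p(G_0,\varepsilon\|\overline{\boldsymbol{\beta}}\|_*)$ from \citet[Theorem 5]{mao2022model}. Your two inclusions give a sound self-contained proof of that cited result: the dual-norm contraction for ``$\subseteq$'', and lifting along a unit vector attaining the dual norm, via gluing, for ``$\supseteq$''. The near-optimal-coupling hedges are unnecessary, since optimal couplings exist for every $p\in[1,\infty]$.
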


 By virtue of Lemma \ref{projection}, this problem admits an one-dimension version prior to its generalization to higher dimensions. The following proposition provides a convex reformulation to the one-dimension problem, which is the key to reformulating the original problem (\ref{OP}).
 \begin{lemma}\label{prop:quantile}
       For $p\geq 1$, $G_0\in\mathcal M(\R)$, and $\varepsilon\ge0$,  we have
    \begin{align}
   \sup_{  G\in  \mathbb B_p(G_0, \varepsilon)} \E^G \left[ \ell_{\alpha}(Z) \right]
       =
       \begin{cases}
       \E^{G_0} \left[\ell_{\alpha}(Z) \right] + \varepsilon \max\{\alpha,1-\alpha\}   ,&p=1,\\
       \inf_{\lambda>0} \left\{ \E^{G_0} \left[\ell_{\alpha}(Z+k_2 \lambda^{1-q})  \right]+\lambda \varepsilon^p + k_1 \lambda^{1-q} \right\}, &p\in(1,\infty),\\
        \E^{G_0}[\ell_\alpha(Z+(2\alpha-1)\varepsilon)] + 2\alpha(1-\alpha)\varepsilon, &p=\infty,
       \end{cases}
    \end{align}
 where
    \begin{align*}
            k_1 &=  \frac{p-1}{p^{q}} \left( \alpha^{q}(1-\alpha) + \alpha(1-\alpha)^{q} \right) ~~{\rm and}~~
            k_2  = \frac{p-1}{p^{q}}\left( \alpha^{q} - (1-\alpha)^{q} \right).
    \end{align*}
 \end{lemma}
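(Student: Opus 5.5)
We treat the three regimes of Lemma~\ref{prop:quantile} separately. In each case we represent the ball as $\{G=F_{Z_0+\Delta}: \|\Delta\|_{L^p}\le\varepsilon\}$, with $Z_0\sim G_0$ on the atomless space and $\Delta$ a coupled perturbation. This representation is exact because the space is atomless and optimal couplings exist in one dimension.

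\emph{Case $p=1$.} The check loss is Lipschitz with constant $\max\{\alpha,1-\alpha\}$, so for every feasible $\Delta$ we have $\E[\ell_\alpha(Z_0+\Delta)]\le \E[\ell_\alpha(Z_0)]+\max\{\alpha,1-\alpha\}\,\E|\Delta|$, which gives the upper bound. For the lower bound, take $\alpha\ge 1/2$ and the perturbation $\Delta=(\varepsilon/\P(B))\mathbbm 1_B$ with $B=\{Z_0\ge -t\}$. Letting $t\to\infty$ (or using the construction of Proposition~\ref{prop:worst-case-p1}) shows that the gain $\alpha\varepsilon$ is attained, or approached, exactly. The case $\alpha<1/2$ is symmetric, with the mass pushed to the left tail.

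\emph{Case $p\in(1,\infty)$.} We use strong Wasserstein duality (Blanchet--Murthy / Gao--Kleywegt):
\[
\sup_{G\in\mathbb B_p(G_0,\varepsilon)}\E^G[\ell_\alpha(Z)]=\inf_{\lambda\ge0}\Big\{\lambda\varepsilon^p+\E^{G_0}\big[\sup_{u\in\R}\{\ell_\alpha(Z+u)-\lambda|u|^p\}\big]\Big\}.
\]
The key step is computing the inner supremum $\phi_\lambda(z)=\sup_u\{\ell_\alpha(z+u)-\lambda|u|^p\}$ in closed form. On the right of the kink the slope is $\alpha$, so the optimal push is $u_+=(\alpha/(p\lambda))^{q-1}$, with gain $\alpha u_+-\lambda u_+^p=\frac{p-1}{p^q}\alpha^q\lambda^{1-q}$. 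On the left of the kink the slope is $-(1-\alpha)$, giving the analogous gain $\frac{p-1}{p^q}(1-\alpha)^q\lambda^{1-q}$.

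For $z$ near the kink, the optimal $u$ may be one that crosses the kink, so $\phi_\lambda$ is not simply $\ell_\alpha$ plus a constant. One must check that
\[
\phi_\lambda(z)=\max\Big\{\alpha z+\tfrac{p-1}{p^q}\alpha^q\lambda^{1-q},\; -(1-\alpha)z+\tfrac{p-1}{p^q}(1-\alpha)^q\lambda^{1-q}\Big\}.
\]
This holds because $\phi_\lambda$ is the maximum over the two affine pieces of $\ell_\alpha$, each contributing its own conjugate term. The right-hand side is again a check function, shifted horizontally and vertically. Matching kinks and constants, we rewrite it as $\ell_\alpha(z+k_2\lambda^{1-q})+k_1\lambda^{1-q}$. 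Concretely, setting $\alpha(z+h)=-(1-\alpha)(z+h)$ at the crossing point yields $h=k_2\lambda^{1-q}$, and evaluating at that point yields $k_1$. This algebraic identification is the main obstacle. The endpoint $\lambda=0$ gives $+\infty$, so the infimum may be taken over $\lambda>0$.

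\emph{Case $p=\infty$.} The ball consists of laws of $Z_0+\Delta$ with $|\Delta|\le\varepsilon$ almost surely, so the supremum is $\E^{G_0}[\sup_{|u|\le\varepsilon}\ell_\alpha(Z+u)]$. Attainment of the pointwise supremum by a measurable selection is trivial here. Pointwise, $\sup_{|u|\le\varepsilon}\ell_\alpha(z+u)=\max\{\alpha(z+\varepsilon),(1-\alpha)(\varepsilon-z)\}$. The same kink-matching as in the previous case rewrites this as $\ell_\alpha(z+(2\alpha-1)\varepsilon)+2\alpha(1-\alpha)\varepsilon$.
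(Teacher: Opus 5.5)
Your treatment of $p\in(1,\infty)$ and $p=\infty$ is correct and follows the paper's proof. For $p\in(1,\infty)$ both use strong duality, a closed form for $\phi_\lambda$, and rewrite a maximum of two affine functions as a shifted check loss; for $p=\infty$ both maximize pointwise at $u=\pm\varepsilon$. Your derivation of $\phi_\lambda$ through $\sup_u\max\{\cdot,\cdot\}=\max\{\sup_u\cdot,\sup_u\cdot\}$ over the two affine pieces of $\ell_\alpha$ is tidier than the paper's. The paper solves first-order conditions separately on $(0,\infty)$ and $(-\infty,0)$, then must track which candidate maximizer lies in its domain before comparing $g(y_1)$ and $g(y_2)$.

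The step that fails is the lower bound for $p=1$. With $B=\{Z_0\ge -t\}$ and $t\to\infty$, you get $\P(B)\to 1$ and $\Delta\to\varepsilon$ essentially everywhere. Points with $Z_0<-\varepsilon$ then have their loss \emph{decreased} by $(1-\alpha)\varepsilon$. For instance, take $G_0=\delta_{-1}$, $\alpha=0.7$, $\varepsilon=1/2$. Your perturbation gives $\E[\ell_\alpha(Z_0+\Delta)]=0.15<0.3=\E^{G_0}[\ell_\alpha(Z)]$, whereas the claimed supremum is $0.65$. The mass must instead be moved on a set of small probability, so that the shift $\varepsilon/\P(B)$ is large enough to cross the kink.

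A correct argument runs as follows. For all $z\in\R$ and $\delta\ge 0$ one has $\ell_\alpha(z+\delta)-\ell_\alpha(z)\ge \alpha\delta-z_-$. Hence $\Delta=(\varepsilon/\P(B))\mathbbm{1}_B$ yields a gain of at least $\alpha\varepsilon-\E[(Z_0)_-\mathbbm{1}_B]$. If $\P(Z_0\ge 0)>0$, take $B=\{Z_0\ge 0\}$ to get exactly $\alpha\varepsilon$. This is the construction of Proposition~\ref{prop:worst-case-p1}; its verification there is a direct computation, so invoking it is not circular even though that proof quotes the present lemma. If $\P(Z_0\ge 0)=0$, your parenthetical fallback gives nothing. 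In that case take events $B$ with $\P(B)\downarrow 0$, which exist on the atomless space. Then $\E[(Z_0)_-\mathbbm{1}_B]\to 0$ by integrability of $Z_0$, which holds whenever $\E^{G_0}[\ell_\alpha(Z)]<\infty$; otherwise both sides are $+\infty$. The paper sidesteps this case by citing \citet{shafieezadeh2019regularization} for $p=1$.
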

\begin{proof}[Proof]
    When \(p=1\), the problem reduces to classic Lipschitz regularization framework, a well-established case covered in \citet{shafieezadeh2019regularization}.

When \(p\in(1,\infty)\), by classic strong duality result of Wasserstein DRO (see, e.g., \citet{gao2023distributionally}), we have
    \begin{align}
    \label{eq:dual-1}
       \sup_{  G\in  \mathbb B_p(G_0, \varepsilon)} \E^{G} \left[\ell_{\alpha}(Z) \right] &= \inf_{\lambda>0} \left\{ \E^{G_0} \left[\phi_\lambda(Z)\right]+\lambda \varepsilon^p \right\},
    \end{align}
    where $\phi_\lambda(z) = \sup_{y\in\R} \left\{\ell_\alpha(y) - \lambda |y-z|^p\right\}$. Define
    \begin{align*}
        g(y) := \ell_\alpha(y) - \lambda |y-z|^p&= \begin{cases}
            \alpha y - \lambda |y-z|^p, & y\geq 0,\\
            (\alpha-1)y - \lambda |y-z|^p, & y<0.
        \end{cases}
    \end{align*}
    Note that $g(y)$ is concave and differentiable on both $(-\infty,0)$ and $(0,\infty)$, and that 0 is not a maximum point of $g(y)$. We can thus consider the two cases separately by calculating the first-order condition of $g(y)$ in each case. If $y>0$, we have
    $g^\prime(y) = \alpha - p\lambda|y-z|^{p-1}\times\sgn(y-z)$, which implies an possible maximum point at $y_1 = z + \left({\alpha}/{p\lambda}\right)^{q/p}$. If $y<0$, we can similarly get another possible maximum point at $y_2 = z - \left({1-\alpha})/{p\lambda}\right)^{q/p}$.

    Let $y^*$ denote the global maximum point. We first consider the case where only one of the two points lies in their respective domains. If $z>\left(({1-\alpha})/{p\lambda}\right)^{q/p}$, then $y_1>y_2>0$, implying that $y^*=y_1$. Similarly, if $z<-\left({\alpha}/{p\lambda}\right)^{q/p}$, then $y_2<y_1<0$, implying that $y^*=y_2$. As for the case where both points exist, we need to compare the values of $g(y_1)$ and $g(y_2)$ to determine the global maximum point. Note that
    \begin{align*}
        g(y_1) &= \alpha z + (p-1)\left(\frac\alpha p\right)^{q}\lambda^{1-q},\quad g(y_2) = (\alpha-1)z + (p-1)\left(\frac{1-\alpha}{p}\right)^{q}\lambda^{1-q},
    \end{align*}
    and $g(y_1)\geq g(y_2)$ if and only if $z\ge -k_2\cdot\lambda^{1-q} $. This critical condition aligns with the two previous cases. Therefore, we have
    \begin{align*}
        y^* = \begin{cases}
            y_1, & z\ge -k_2\times\lambda^{1-q},\\
            y_2, & z< -k_2\times\lambda^{1-q},
        \end{cases}
    \end{align*}
    and thus
    \begin{align*}
        \phi_\lambda(z) = g(y^*)
        &= \begin{cases}
            \alpha z + (p-1)\left(\frac\alpha p\right)^{q}\lambda^{1-q}, & z+k_2\lambda^{1-q}\geq 0,\\
            (\alpha-1)z + (p-1)\left(\frac{1-\alpha}{p}\right)^{q}\lambda^{1-q}, & z+k_2\lambda^{1-q}< 0.
        \end{cases}\\
        &=\alpha (z+k_2\lambda^{1-q})_+ + (1-\alpha)(z+k_2\lambda^{1-q})_- + k_1\lambda^{1-q} \\&=\ell_\alpha(z+k_2\lambda^{1-q})+ k_1\lambda^{1-q} .
    \end{align*}
    Substituting $\phi_\lambda(z)$ into the dual problem \eqref{eq:dual-1}  gives the desired result.

    Finally, when $p=\infty$, we have
    \begin{align*}
        \sup_{G\in\mathbb{B}_\infty(G_0,\varepsilon)} \E^G \left[ \ell_{\alpha}(Z) \right] = \sup_{|V|\le \varepsilon}\E^{G_0}\left[\ell_\alpha(Z+V)\right] = \E^{G_0}\left[\sup_{|y|\le \varepsilon}\ell_\alpha(Z+y)\right],
    \end{align*}
    where the second supremum is taken over random variables $V$ such that $|V|\le \epsilon$ almost surely.
    Now we consider the inner supremum. Given $z\in\R$, we have
    \begin{align*}
        \sup_{|y|<\varepsilon}\ell_\alpha(z+y) &= \max\{\alpha(z+\varepsilon)_+, (1-\alpha)(z-\varepsilon)_-\}\\
        &= \begin{cases}
            \alpha(z+\varepsilon), & z+(2\alpha-1)\varepsilon \geq 0,\\
            (1-\alpha)(\varepsilon-z), & z+(2\alpha-1)\varepsilon < 0.
        \end{cases}\\
        &= \alpha(z+(2\alpha-1)\varepsilon)_+ + (1-\alpha)(z+(2\alpha-1)\varepsilon)_-+2\alpha(1-\alpha)\varepsilon.
    \end{align*}
    The first equality holds because the supremum of convex function $\ell_\alpha$ is attained at the boundary points. The second and third equalities follow from straightforward calculations. It follows that
    \begin{align*}
        \sup_{G\in \mathbb B_\infty(G_0,\varepsilon)} \E^G[\ell_\alpha(Z)] = \E^{G_0}[\ell_\alpha(Z+(2\alpha-1)\varepsilon)] + 2\alpha(1-\alpha)\varepsilon.
    \end{align*}
    This completes the proof.
\end{proof}
  Now, we are ready to prove   Theorem \ref{thm:quantile}.
\begin{proof}[Proof of Theorem \ref{thm:quantile}]
   When $p=1$, the result directly follows from Lemmas \ref{projection} and  \ref{prop:quantile}. We next consider the case $p>1$. In this case, by Lemmas \ref{projection} and  \ref{prop:quantile}, the quantile regression problem (\ref{OP}) can be reformulated as
    \begin{align} \label{problem:1}
        &\inf_{\boldsymbol{\beta},s,\lambda>0} \left\{\lambda(\varepsilon\|\overline{\boldsymbol{\beta}}\|_*)^p + k_1\lambda^{1-q} + \E^{F_0} \left[\ell_{\alpha}(Y - \boldsymbol{\beta}^\top \mathbf{X}-s+k_2\lambda^{1-q}) \right]\right\}\notag\\
        &= \inf_{\boldsymbol{\beta},\lambda>0} \left\{\lambda(\varepsilon\|\overline{\boldsymbol{\beta}}\|_*)^p + k_1\lambda^{1-q} + \inf_{s} \E^{F_0} \left[\ell_{\alpha}(Y - \boldsymbol{\beta}^\top \mathbf{X}-s+k_2\lambda^{1-q}) \right]\right\}\notag\\
        & = \inf_{\boldsymbol{\beta}\in\R^d,\lambda>0} \left\{\lambda(\varepsilon\|\overline{\boldsymbol{\beta}}\|_*)^p + k_1\lambda^{1-q} + \inf_{s\in\R} \E^{G_{\boldsymbol{\beta}}} \left[\ell_{\alpha}(Z_\beta-s+k_2\lambda^{1-q}) \right]\right\}\notag\\
         & = \inf_{\boldsymbol{\beta}\in\R^d,\lambda>0} \left\{\lambda(\varepsilon\|\overline{\boldsymbol{\beta}}\|_*)^p + k_1\lambda^{1-q} + \inf_{\overline{s}\in\R} \E^{G_{\boldsymbol{\beta}}} \left[\ell_{\alpha}(Z_\beta-\overline{s}) \right]\right\}\notag\\
         &= \inf_{\boldsymbol{\beta}\in\R^d} \left\{\inf_{\lambda>0}\{\lambda(\varepsilon\|\overline{\boldsymbol{\beta}}\|_*)^p+ k_1\lambda^{1-q} \} + \inf_{\overline{s}\in\R} \E^{G_{\boldsymbol{\beta}}} \left[\ell_{\alpha}(Z_\beta-\overline{s}) \right]\right\},
    \end{align}
    with $k_1$ and $k_2$ defined in Lemma  \ref{prop:quantile}.    The first equality holds because the value ranges of $\overline{\boldsymbol{\beta}}$ and $s$ are irrelevant,
    $G_{\boldsymbol{\beta}}$ is the distribution of $Z_\beta= Y_0 - \boldsymbol{\beta}^\top \mathbf{X}_0$ with $(  \mathbf{X}_0,Y_0 )\sim F_0$, and
    the third equality follows from that $s$ can take any value  in $\R$ and thus,
    \begin{align}\label{eq:temp}
        \inf_{s\in\R} \E^{G_{\boldsymbol{\beta}}} \left[\ell_{\alpha}(Z_\beta-s+k_2\lambda^{1-q}) \right]=\inf_{\overline{s}\in\R} \E^{G_{\boldsymbol{\beta}}} \left[\ell_{\alpha}(Z_\beta-\overline{s}) \right],
    \end{align}  the last equality follows from that $\inf_{\overline{s}\in\R} \E^{G_{\boldsymbol{\beta}}} \left[\ell_{\alpha}(Z_\beta-\overline{s}) \right]$ is independent of $\lambda$.
     Moreover, consider the first minimization problem of problem \eqref{problem:1}, $\inf_{\lambda>0}\: \lambda(\varepsilon\|\overline{\boldsymbol{\beta}}\|_*)^p + k_1 \lambda^{1-q}$.  By standard manipulations, one can verify that the optimal value of $\lambda$ is
       $ \lambda^{*} =   ({(q-1)k_1})^{{1}/{q}}/{(\varepsilon\|\overline{\boldsymbol{\beta}}\|_*)}^{{p}/{q}} $, and thus,
    \begin{align*}
        \inf_{\lambda>0}\: \lambda(\varepsilon\|\overline{\boldsymbol{\beta}}\|_*)^p + k_1 \lambda^{1-q} = \varepsilon\|\overline{\boldsymbol{\beta}}\|_*  k_1^{ {1}/{q}} q(q-1)^{-{1}/{p}}.
    \end{align*}
   Substituting this into the problem (\ref{problem:1}), we have problem \eqref{OP} is equivalent to
     \begin{align*}
        &\inf_{\boldsymbol{\beta}\in\R^d,\overline{s}\in \R} \left\{  \E^{F_0} \left[\ell_{\alpha}(Z_{ \boldsymbol{\beta}}-\overline{s})\right] +\varepsilon\|\overline{\boldsymbol{\beta}}\|_*  k_1^{{1}/{q}} q(q-1)^{-{1}/{p}}\right\}\\
        &=\inf_{\boldsymbol{\beta}\in\R^d,\overline{s}\in \R}  \left\{\E^{F_0} \left[ \ell_{\alpha}(Y - \boldsymbol{\beta}^\top \mathbf{X}-\overline{s}) \right] + C\varepsilon\|\overline{\boldsymbol{\beta}}\|_* \right\} .
    \end{align*}
  That is the problem \eqref{prob-1}. The relation between  the optimal values of $\overline{s}$ to problem \eqref{OP}  and $s$ to  problem \eqref{prob-1} is due to \eqref{eq:temp}. This completes the proof.

\end{proof}

\subsubsection{Proof of Theorem \ref{thm:equivalence}}
The proof of Theorem~\ref{thm:equivalence} uses Proposition~\ref{prop:worst-case-p}, whose proof is deferred to Section~\ref{sec:proof2}.
 To prove the ``only if'' part, we first give the following lemmas. Without loss of generality, we assume that $c=1$ in~\eqref{eq:cond-rho} throughout the rest of this section.

\begin{lemma}\label{lem:ell}
    For $p>1$, let $\ell:\R\to\R$ be a convex function and $\rho:\mathcal M_p(\R)\to \R$ be given by $ \rho^F(X):=\inf_{s\in\R}  \E^F \left[ \ell(X-s) \right]$.
    If \eqref{eq:cond-rho} holds for any $F_0\in\M_p(\R)$ and $\varepsilon\ge0$, then we have the following statements hold.
    \begin{enumerate}[(i)]
        \item The function $\ell$ is coercive, and thus, $\ell$ is decreasing on $(-\infty,\overline{x}]$ and increasing on $[\overline{x},\infty)$ for some $\overline{x}\in\R$.   \label{item:lem-ell-i}
        \item For any $F\in\mathcal M_p(\R)$, $\inf_{x\in\R}\E^{F} \left[ \ell(X-x) \right]$ is attained by some finite $x^*\in [\essinf^F(X)-\overline{x} , \esssup^F(X)-\overline{x}]$.    \label{item:lem-ell-ii}
        \item For $r\in (1,p)$,  there exists $M>0$ and such that $\ell(x)\le M(1+|x|^r)$ for all $x\in\R$. \label{item:lem-ell-iii}
    \end{enumerate}
\end{lemma}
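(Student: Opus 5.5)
For part (i), I would argue by contradiction. Suppose $\ell$ is convex but not coercive. Then $\ell$ is monotone on $\R$.

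Consider first the case that $\ell$ is nondecreasing. For any $F\in\M_p(\R)$, sending $s\to\infty$ and using monotone convergence gives
\[
\rho^F(X)=\inf_{s}\E^F[\ell(X-s)]=\lim_{x\to-\infty}\ell(x)=\inf\ell .
\]
The nonincreasing case is symmetric. So either $\inf\ell=-\infty$, which contradicts $\rho$ being real-valued, or $\rho^F$ is a constant independent of $F$. In the latter case the left side of \eqref{eq:cond-rho} equals $\rho^{G_0}(X)$, which contradicts $c\varepsilon>0$ for $\varepsilon>0$. Hence $\ell$ is coercive. A convex coercive function attains its minimum at some $\overline{x}$, and it is nonincreasing on $(-\infty,\overline{x}]$ and nondecreasing on $[\overline{x},\infty)$.

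For part (ii), set $h(s):=\E^F[\ell(X-s)]$. This function is convex in $s$ and, since $\rho$ is real-valued, finite at some point; it is lower semicontinuous by Fatou's lemma. By (i) and Fatou, $h(s)\to\infty$ as $|s|\to\infty$, so a finite minimizer $x^*$ exists. To locate it, take $s>\esssup^F(X)-\overline{x}$. Then $X-s<\overline{x}$ a.s., and on this region $\ell$ is nonincreasing. Hence lowering $s$ to $\esssup^F(X)-\overline{x}$ does not increase $h$. The symmetric argument handles $s<\essinf^F(X)-\overline{x}$. So a minimizer can be chosen in $[\essinf^F(X)-\overline{x},\esssup^F(X)-\overline{x}]$; this interval is all of $\R$ when the support is unbounded.

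For part (iii), which I expect to be the main work, I normalize $\min\ell=0$ by subtracting a constant, since this changes neither side of \eqref{eq:cond-rho}. By coercivity and convexity there are $a>0$ and $b\ge0$ with $\ell(x)\ge a|x|-b$. I treat large $y>0$; the case of large negative $y$ is symmetric.

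Take $G_0=\delta_0$, so $\rho^{G_0}=0$, and the two-point perturbation $G=(1-t)\delta_0+t\delta_y$ with $t=\varepsilon^p/y^p$. Then $\mathrm{W}_p(G,G_0)=\varepsilon$, so \eqref{eq:cond-rho} gives $\rho^G\le\varepsilon$. On the other hand, split the infimum over $s$ into two cases:
\begin{itemize}
\item if $s\le y/2$ and $y/2\ge\overline{x}$, then $t\,\ell(y-s)\ge t\,\ell(y/2)$;
\item if $s>y/2$, then $(1-t)\ell(-s)\ge(1-t)(ay/2-b)$.
\end{itemize}
Therefore
\[
\varepsilon\ \ge\ \rho^G\ \ge\ \min\bigl\{t\,\ell(y/2),\ (1-t)(ay/2-b)\bigr\}.
\]

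The key step is to let the radius depend on $y$, which is allowed because \eqref{eq:cond-rho} holds for every $\varepsilon$. Choose $\varepsilon=y^{\gamma}$ with $\gamma\in(0,1)$. Then $t=y^{-p(1-\gamma)}\to0$, and for large $y$ the second branch satisfies $(1-t)(ay/2-b)\sim ay/2>y^{\gamma}$. So the minimum must be attained by the first branch, which forces
\[
\ell(y/2)\le \varepsilon/t = y^{\,p-(p-1)\gamma}.
\]
Given $r\in(1,p)$, pick $\gamma$ close enough to $1$ that $p-(p-1)\gamma\le r$. This yields $\ell(u)\le C u^r$ for all large $u$. Combining this with continuity of $\ell$ on compact sets and the symmetric bound for $u\to-\infty$ gives $\ell(x)\le M(1+|x|^r)$ for all $x\in\R$.
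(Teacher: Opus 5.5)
Your proposal is correct. Parts (i) and (ii) follow the paper's argument. In (i), a non-coercive convex $\ell$ is monotone, so monotone convergence forces $\rho$ to be constant. For the decreasing limit you need some $s_0$ with $\E^F[\ell(X-s_0)]<\infty$, which real-valuedness of $\rho$ supplies, as the paper notes. In (ii), you get existence of a minimizer plus monotonicity of $s\mapsto\E^F[\ell(X-s)]$ outside $[\essinf^F(X)-\overline{x},\esssup^F(X)-\overline{x}]$. Your Fatou-based lower semicontinuity is a detail the paper leaves implicit.

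For (iii) you take a genuinely different route. The paper argues by contradiction along a sequence with $\ell(x_n)>n(1+|x_n|^r)$. It chooses a radius proportional to $y$, so the mass $\pi=(\epsilon_y/y)^p$ placed at $y$ is a fixed constant of order $\kappa^p$. It then invokes (ii) to confine the optimal intercept to $[0,y]$, and uses the linear minorant $\ell_2(x)\ge\kappa(x-x_0)$ to push that intercept below $2y/3+x_0$.

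You instead argue directly. Splitting the infimum over all $s\in\R$ at $y/2$ removes any need for (ii). The sublinear radius $\varepsilon=y^\gamma$ makes the linear-growth branch exceed $c\varepsilon$ automatically, giving $\ell(u)=O(u^{p-(p-1)\gamma})$, with the exponent tunable to any $r\in(1,p)$. Your version is shorter and avoids the contradiction structure.

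What the paper's linear-radius choice buys is a stronger conclusion than the lemma states. Its contradiction already goes through using only the factor $n$, so it actually shows $\ell(u)=O(1+|u|)$. Your argument recovers this too if you take $\gamma=1$ with a small constant: set $\varepsilon=\delta y$ with $c\delta<(1-\delta^p)a/2$, and you get $\ell(y/2)\le c\,\delta^{1-p}y$ for large $y$.
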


\begin{proof}[Proof]
  (i)  We  prove it by contradiction. Assuming that $\ell$ is not coercive, by the convexity of $\ell$, we have $\ell$ is monotone,  and thus, for any  $F$ with bounded support
  , we have $\rho^F(X) =\inf_{x\in\R}\E^{F} \left[ \ell(X-x) \right] =\inf_{x\in\R}\ell(x)$. This contradicts $\rho^F(X)\in\R$ for any $F$ with bounded support if $\inf_{x\in\R}\ell(x)=-\infty$.
  Now assume $\inf_{x \in \mathbb{R}} \ell(x) = L > -\infty$ and, without loss of generality, let $\ell$ be non-decreasing. For any $F \in \mathcal{M}_p(\mathbb{R})$ and $X\sim F$, since $\rho^F(X)\in\R$, there exists $x_0 \in \mathbb{R}$ such that $\mathbb{E}^F[\ell(X-x_0)] < \infty$. For all $x > x_0$, the monotonicity of $\ell$ implies that $\ell(X-x) \le \ell(X-x_0)$, and $\ell(X-x) \downarrow L$ almost surely as $x \to \infty$.
Since $\ell(X-x)$ is bounded below by $L$ and above by $\ell(X-x_0)$, the Dominated Convergence Theorem yields $\rho^F(X) = \inf_{x \in \mathbb{R}} \mathbb{E}^F[\ell(X-x)] = \lim_{x \to \infty} \mathbb{E}^F[\ell(X-x)] = \mathbb{E}^F \left[ \lim_{x \to \infty} \ell(X-x) \right] = L$ for any $F \in \mathcal{M}_p(\mathbb{R})$,
which contradicts \eqref{eq:cond-rho}.
  Thus, we complete the proof of (i).

    (ii) It suffices to consider the case $\ell(0)=\min_{x}\ell(x)$, and we aim to show there exists a minimizer $x^*$ such that $\essinf(X)\le x^* \le \esssup(X)$.
    The existence of $x^*$ follows immediately from   $\lim_{x\to\pm\infty}\E^{F} \left[ \ell(X-x)\right]=\infty$ by (i) and the convexity of $x\mapsto\E^F[\ell(X-x)]$ for any $F\in \mathcal M_p(\R)$.
    If $\essinf(X)=-\infty$ and $\esssup(X)=\infty$, then the claim holds trivially. If $\essinf(X)>-\infty$, then $\E[\ell(X-x)]$ is non-increasing for $x<\essinf(X)$ which implies that there exists a minimizer $x^*$ such that $x^*\ge \essinf(X)$. Similarly, if $\esssup(X)<\infty$, then there exists a minimizer $x^*$ such that $x^*\le \esssup(X)$. Thus, we complete the proof of (ii).

    (iii)  We show the result by contradiction.   Given $r\in(1,p)$, assume the contrary, i.e., for any $n\in\N$, there exists $x_n$ such that $\ell(x_n)> n(1+|x_n|^r)$, that is,
    \begin{equation} \label{eq:260218-1}
        \lim_{n\to\infty} \frac{\ell(x_n)}{1+|x_n|^r} = \infty.
    \end{equation}
    Since $\ell(x)/(1+|x|^r)$ is continuous, for any bounded sequence, the image is bounded. Thus $\{x_n\}_{n\in\N}$ must be unbounded. Without loss of generality, assume $x_n>0$ and $x_n\to\infty$.  Note that (i) implies that $\ell$ admits at least one minimizer. Thus, by shifting $\ell$, we may assume that $\ell$ attains its minimum at $0$ and $\ell(0)=0$; moreover, $\ell(x)=\ell_1(x_+)+\ell_2(x_-)$ for some non-decreasing and non-constant functions $\ell_1,\ell_2:\R_+\to\R_+$ and $\ell_1(0)=\ell_2(0)=0$. By the convexity and monotonicity of $\ell_2$ which is non-constant,
    there exist $\kappa>0$ and $x_0\ge 0$ such that $\ell_2(x)\ge \kappa (x-x_0)$ for all $x\ge 0$. Assume $\kappa<1$ without loss of generality.

  Take $F_0=\delta_0$, and for  any $y>0$, denote by $\epsilon_y\in(\frac{\kappa}{4}y,\frac{\kappa}{3}y)$. Since $\kappa<1$, we have $\epsilon_y < y$. Consider a two-point distribution $F_y=(1-\pi)\delta_0 +\pi\delta_y$ with  $\pi = (\epsilon_y/y)^p$. We have
    \begin{align}
        \rho^{F_y}(X) &= \inf_{x\in[0,y]} \pi \ell_1((y - x)_+)+\left(1-\pi\right) \ell_2((-x)_-)\notag \\
        &\ge \inf_{x\in[0,y]} \left(\frac{\kappa}{4}\right)^p \ell_1(y - x)+\left(1-\left(\frac{\kappa}{3}\right)^p\right) \ell_2(x)\notag\\
        &> \inf_{x\in[0,y]} \left(\frac{\kappa}{4}\right)^p \ell_1(y - x)+\frac{1}{2} \kappa (x-x_0)\label{eqlast-1}\\
        &=  \left(\frac{\kappa}{4}\right)^p \ell_1(y - x_y^*)+\frac{1}{2} \kappa (x_y^*-x_0), \label{eqlast}
    \end{align}
    where the first equality we used the minimizer of $x$ lies in $[0,y]$,   the last inequality follows from $\ell_2(x)\ge \kappa (x-x_0)$ and $1-(\kappa/3)^p > 1/2$ as $\kappa<1$, and the last equality follows by denoting by $x_y^*\in [0,y]$ the minimizer to the problem \eqref{eqlast-1}.
    Since $\ell(0)=0$ implies $\rho^{F_0}(X)=0$,  it follows from  \eqref{eq:cond-rho} and $W_p(F_0,F_y)= (\pi y^p)^{1/p} = \epsilon_y$ that
    \begin{equation}\label{eq:rhoFy_bound}
        \rho^{F_y}(X) \le \rho^{F_0}(X) + \epsilon_y = \epsilon_y < \frac{\kappa}{3} y.
    \end{equation}
    Combining \eqref{eq:rhoFy_bound} with \eqref{eqlast} and noting $\ell_1\ge 0$, we have
    $
         \kappa(x_y^*-x_0)/2 <  {\kappa y}/{3} $, that is,  $x_y^* < {2}  y/3 + x_0.
    $
    Now, letting $y = 3(x_n + x_0)$,  we have
   $
        y - x_y^* > y - \left({2}  y/3 + x_0\right)
        = x_n.
     $
    It then follows from \eqref{eqlast} that  for sufficiently large $n$, it holds that
    \begin{align*}
        \rho^{F_y}(X) &> \left(\frac{\kappa}{4}\right)^p \ell_1(y - x_y^*)+\frac{1}{2}\kappa(x_y^*-x_0)
        > \left(\frac{\kappa}{4}\right)^p \ell_1(x_n) -\frac{1}{2}\kappa x_0> \left(\frac{\kappa}{4}\right)^p n (1+|x_n|^r)-\frac{1}{2}\kappa x_0,
    \end{align*}
    where the second inequality follows from the monotonicity of $\ell_1$ and $x_y^*\ge 0$, and the third inequality follows from the assumption \eqref{eq:260218-1}.
    On the other hand, \eqref{eq:rhoFy_bound} implies $\rho^{F_y}(X) < \frac{\kappa}{3}y = \kappa(x_n+x_0)$.
    Thus, we have
    \begin{equation*}
        \left(\frac{\kappa}{4}\right)^p n (1+|x_n|^r)-\frac{1}{2}\kappa x_0 < \kappa(x_n+x_0).
    \end{equation*}
    Since $r>1$, we have $|x_n|^{r}/|x_n| \to \infty$ as $n\to\infty$.
Hence the left-hand side grows faster than linearly in $x_n$,
whereas the right-hand side is linear in $x_n$,
leading to a contradiction as $n \to \infty$. This completes the proof.
\end{proof}

\begin{lemma}\label{lem:rho}

     For $p>1$, let $\ell:\R\to\R$ be a convex function and $\rho:\mathcal M_p(\R)\to \R$ be given by $ \rho^F(X):=\inf_{s\in\R}  \E^F \left[ \ell(X-s) \right]$. If \eqref{eq:cond-rho} holds for any $F_0\in\M_p(\R)$ and $\varepsilon\ge0$, then the map $X\mapsto \rho^F(X)$ is convex, and the map $F\mapsto \rho^{F}(X)$ is weakly upper-semicontinuous on $\mathbb{B}_p(F_0,\varepsilon)$ for any $F_0\in\mathcal M_p(\R)$ and $\varepsilon\ge0$.
\end{lemma}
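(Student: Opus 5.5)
The plan is to handle the two claims separately. The convexity of $X\mapsto\rho^F(X)$ should come out of the structure of $\rho$ alone, with the hypothesis \eqref{eq:cond-rho} entering only through Lemma~\ref{lem:ell}. Upper semicontinuity, by contrast, will use the growth bound of Lemma~\ref{lem:ell}(iii) together with uniform integrability on the Wasserstein ball.

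\textbf{Convexity.} I will read $X\mapsto\rho(X)$ as a functional on random variables $X\in L^p$, with $\rho(X)=\inf_{s}\E[\ell(X-s)]$. Take $X_1,X_2\in L^p$ and $\theta\in[0,1]$. By Lemma~\ref{lem:ell}(ii) there are finite minimizers $s_1,s_2$; when a minimizer is not attained, $\eta$-optimal points suffice. Using $s=\theta s_1+(1-\theta)s_2$ and the pointwise convexity of $\ell$, we get
\[
\rho(\theta X_1+(1-\theta)X_2)\le \E[\ell(\theta(X_1-s_1)+(1-\theta)(X_2-s_2))]\le \theta\E[\ell(X_1-s_1)]+(1-\theta)\E[\ell(X_2-s_2)].
\]
The right-hand side equals $\theta\rho(X_1)+(1-\theta)\rho(X_2)$, which gives convexity. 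Lemma~\ref{lem:ell}(iii) guarantees that all these expectations are finite on $L^p$, and Lemma~\ref{lem:ell}(i) gives a lower bound on $\ell$, so the infimum is finite.

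\textbf{Upper semicontinuity.} Since $\rho^F=\inf_s \E^F[\ell(X-s)]$ is an infimum, it suffices to show that for each fixed $s$ the map $F\mapsto \E^F[\ell(X-s)]$ is weakly continuous on $\mathbb B_p(F_0,\varepsilon)$; an infimum of continuous functions is upper semicontinuous. Fix $s$ and let $F_n\to F$ weakly with all $F_n$ in the ball. By the triangle inequality for $\mathrm W_p$, the $p$-th moments $\E^{F_n}|X|^p\le(\mathrm W_p(F_n,\delta_0))^p$ are uniformly bounded by $(\varepsilon+(\E^{F_0}|X|^p)^{1/p})^p$. Choosing $r\in(1,p)$ in Lemma~\ref{lem:ell}(iii), we have $|\ell(x-s)|\le M'(1+|x|^r)$, where the lower bound follows from the fact that $\ell$ is bounded below by Lemma~\ref{lem:ell}(i). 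Because $r<p$, the family $\{|X|^r\}$ is uniformly integrable under $(F_n)$ by de la Vallée-Poussin. Since $\ell$ is continuous, being convex on $\R$, the standard result that weak convergence plus uniform integrability implies convergence of expectations gives $\E^{F_n}[\ell(X-s)]\to\E^F[\ell(X-s)]$. Therefore $\limsup_n\rho^{F_n}\le \E^F[\ell(X-s)]$ for every $s$, and taking the infimum over $s$ yields $\limsup_n\rho^{F_n}\le\rho^F$. The ball is weakly closed, since $\mathrm W_p$ is weakly lower semicontinuous, so the limit $F$ lies in the ball and $\rho^F$ is defined there.

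\textbf{Main obstacle.} The delicate step is the uniform integrability, and that is exactly why the growth exponent must be strictly below $p$, as furnished by Lemma~\ref{lem:ell}(iii). With growth of order $|x|^p$, a moment bound on the ball alone would not prevent mass from escaping to infinity.
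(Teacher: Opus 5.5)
Your proposal is correct and follows the paper's route. Like the paper, you reduce weak upper semicontinuity of $F\mapsto\rho^F(X)$ to the behavior of $F\mapsto\E^F[\ell(X-s)]$ for each fixed $s$, controlled by the sub-$p$ growth bound of Lemma~\ref{lem:ell}(iii), and then interchange $\limsup$ and $\inf$; convexity is the direct check the paper leaves implicit. The only difference is that your uniform-integrability argument proves the per-$s$ continuity step yourself, whereas the paper cites the proof of Theorem~3 of \citet{yue2022linear} for it.
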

\begin{proof}[Proof] 
    The convexity of $X\mapsto \rho^{F}(X)$ is straightforward to check.

    The weak upper-semicontinuity of $F\mapsto \rho^{F}(X)$ on $\mathbb{B}_p(F_0,\varepsilon)$ follows from the weak upper-semicontinuity of $F\mapsto \E^F[\ell(X-x)]$ for any fixed $x\in\R$, which is guaranteed by Lemma~\ref{lem:ell}(\ref{item:lem-ell-iii}) and the proof of Theorem~3 in \cite{yue2022linear}. In fact, for any sequence $\{F_n\}\subseteq \mathbb{B}_p(F_0,\varepsilon)$ weakly converging to some $F\in \mathbb{B}_p(F_0,\varepsilon)$, we have
        \begin{align*}
            \limsup_{n\to\infty} \rho^{F_n}(X) &= \limsup_{n\to\infty} \inf_{x\in\R}  \E^{F_n} \left[ \ell(X-x) \right] \\
            &\le \inf_{x\in\R} \limsup_{n\to\infty}  \E^{F_n} \left[ \ell(X-x) \right] \le \inf_{x\in\R}  \E^{F} \left[ \ell(X-x) \right] = \rho^{F}(X).
        \end{align*}
        This completes the proof.
\end{proof}

We are now ready to prove Theorem~\ref{thm:equivalence}.
\begin{proof}[Proof of Theorem~\ref{thm:equivalence}]
    We only need to prove the ``only if'' part. Without loss of generality, assume that $\ell(0)=\min_x\ell(x) =0$ by Lemma \ref{lem:ell}. We can write $\ell(x)=\ell_1(x_+)+\ell_2(x_-)$.
    First, note that the Wasserstein ball $\mathbb{B}_p(F_0,\varepsilon)$ is weakly compact for any $F_0\in\mathcal M_p(\R)$ and $\varepsilon\ge0$ \citep[Theorem~1]{yue2022linear}. Thus, by Lemma~\ref{lem:rho}, the Weierstrass theorem applies and the supremum in $\sup_{F\in \mathbb{B}_p(F_0,\varepsilon)}  \rho^F(X)$ is attained for any $F_0\in\mathcal M_p(\R)$ and $\varepsilon\ge0$. Following the proof of Lemma~\ref{lem:ell}, let $F_0=\delta_0.$ Denote the worst-case distribution attaining the supremum in $\sup_{F\in \mathbb{B}_p(\delta_0,\varepsilon)}  \rho^F(X)$ by $F^*_\varepsilon$ and let $X^*_\varepsilon\sim F^*_\varepsilon$. For notational convenience, we write $\rho(X^*_\varepsilon)$ in place of $\rho^{F^*_\varepsilon}(X)$ to allow direct manipulations of the random variable.  Denote by $x_{\sup}=\esssup(X_\varepsilon^*)$  and $x_{\inf}=\essinf(X_\varepsilon^*)$. We have $x_{\inf}<0<x_{\sup}$, as $0$ minimizes $\inf_{t\in\R} \E[|X^*_\epsilon - t|^p] $. To see it,  note that if $\epsilon
    =  W_p(X^*_\epsilon,\delta_0)=\left(\E[|X^*_\epsilon |^p] \right)^{1/p}>\left(\E[|X^*_\epsilon - t|^p]\right)^{1/p}=W_p(X^*_\epsilon,\delta_t)=:\epsilon'$  for $t\neq 0$, then by \eqref{eq:cond-rho}, we have $ \rho(X^*_\varepsilon) \le \rho(t) + \varepsilon' = \varepsilon'<\epsilon= \rho(X^*_\varepsilon) $ yielding a contradiction.

    By \eqref{eq:cond-rho}, we have $ \rho(X^*_\varepsilon) = \rho(0) + \varepsilon = \varepsilon$, and $\E[|X^*_\varepsilon|^p] \le \varepsilon^p$. We claim that
    \begin{equation}\label{eq:linear-rho}
        \rho(\lambda X^*_\varepsilon) = \lambda \rho(X^*_\varepsilon)=\lambda \varepsilon \text{ for any } \lambda>0.
    \end{equation}
    We show \eqref{eq:linear-rho} by contradiction.
    Suppose   there exists  $\lambda_0>0$ such that $\rho(\lambda_0 X^*_\varepsilon) < \lambda_0 \varepsilon$. Consider first the case $\lambda_0\in(0,1)$. Denote by $F_{\lambda_0}$ the distribution of $\lambda_0 X^*_\varepsilon$. We have $\E[|X^*_\varepsilon - \lambda_0 X^*_\varepsilon|^p]^{1/p} \le (1-\lambda_0)\varepsilon$ which implies $F^*_\varepsilon\in \mathbb{B}_p(F_{\lambda_0},(1-\lambda_0)\varepsilon) $, and thus,
    \begin{equation*}
        \rho(X_\varepsilon^*)\le \sup_{F\in \mathbb{B}_p(F_{\lambda_0},(1-\lambda_0)\varepsilon)}  \rho^F(X) = \rho(\lambda_0 X^*_\varepsilon) + (1-\lambda_0)\varepsilon < \varepsilon,
    \end{equation*}
    where the equality follows from    \eqref{eq:cond-rho}. This  contradicts  $\rho(X_\varepsilon^*) = \varepsilon$. Next, consider the case $\lambda_0>1$. It follows from the convexity of $\rho(X)$ by lemma \ref{lem:rho} that
    \begin{equation*}
        \rho(X^*_\varepsilon) = \rho\left(\frac{1}{\lambda_0} \lambda_0 X^*_\varepsilon + \left(1-\frac{1}{\lambda_0}\right) 0\right) \le \frac{1}{\lambda_0} \rho(\lambda_0 X^*_\varepsilon) + \left(1-\frac{1}{\lambda_0}\right) \rho(0)< \varepsilon,
    \end{equation*}
    which again leads to a contradiction. Thus, the claim \eqref{eq:linear-rho} holds.

    Next, we show that $\ell_1$ and $\ell_2$ are  linear on $\R_+$. Denote by  $x^*$ a minimizer of $\inf_{x\in\R}\E[\ell(X_\varepsilon^*-x)]$ such that $x_{\inf}\le x_\varepsilon^* \le x_{\sup}$, which exists by Lemma~\ref{lem:ell}(\ref{item:lem-ell-ii}). We have $\rho(X^*_\epsilon)
     = \E \left[ \ell (X^*_\epsilon - x^*)  \right]= \varepsilon$, and thus by \eqref{eq:linear-rho}, for any $\lambda\in(0,1)$, it holds that
\begin{align*}
    \lambda \varepsilon = \rho(\lambda X^*_\epsilon) & = \inf_{x\in\R} \E \left[ \ell(\lambda X^*_\epsilon - x) \right]\\
    & \le \E \left[ \ell(\lambda X^*_\epsilon - \lambda x^*) \right] \\
    &= \E\left[ \ell_1(\lambda (X^*_\epsilon - x^*)_+) \right] + \E\left[ \ell_2(\lambda (X^*_\epsilon - x^*)_-) \right]\\
    &\le \lambda \E\left[ \ell_1((X^*_\epsilon - x^*)_+) \right] + \lambda \E\left[ \ell_2((X^*_\epsilon - x^*)_-) \right] = \lambda \varepsilon,
\end{align*}
where  the last inequality holds  due to $\ell_i(\lambda x) \le \lambda \ell_i(x) $ for $x\ge 0$ and $i=1,2$ by the convexity of $\ell_1$ and $\ell_2$ and  $\ell_1(0)=\ell_2(0)=0$. Thus, all inequalities above are equalities. In particular,  by $\ell_i(\lambda x) \le \lambda \ell_i(x) $ for $x\ge 0$ and $i=1,2$  again, we have
\begin{equation}\label{eq:260208-2}
     \ell_1(\lambda (X^*_\epsilon - x^*)_+)   = \lambda \ell_1((X^*_\epsilon - x^*)_+)  ,~~
       \ell_2(\lambda (X^*_\epsilon - x^*)_-)  = \lambda   \ell_2((X^*_\epsilon - x^*)_-)  ~{\rm a.s.},\quad \forall \lambda\in[0,1].
\end{equation}
Since $\rho^F(X)=0$ for any degenerate distribution $F$, we have $x_{\sup}>x_{\inf}$ and thus at least one of the two intervals $[0,x_{\sup} - x^*)$ and $[0,x^* - x_{\inf})$ is non-trivial.  We consider the following three cases.
\begin{itemize}
    \item [1.] If $x_{\inf}<x^*<x_{\sup}$, then   \eqref{eq:260208-2} implies  $\ell_1$ is linear on $[0,x_{\sup} - x^*)$  and $\ell_2$ is linear on $[0,   x^*-x_{\inf})$. Furthermore, by \eqref{eq:linear-rho}, we have for any $\lambda>0$, the distribution of $\lambda X^*_\epsilon$ is also a worst-case distribution and $\lambda x^*$ is a minimizer of $\inf_{x\in\R}\E[\ell(\lambda X_\epsilon^*-x)]$. By selecting $\lambda$ sufficiently large, we can extend the non-trivial linearity interval to $\R_+$, i.e., both $\ell_1$ and $\ell_2$ are linear on $\R_+$.

   \item [2.]  If $x_{\sup}=x^*>x_{\inf}$,    then \eqref{eq:260208-2} implies   $\ell_2$ is linear on $[0,   x^*-x_{\inf})$.
   Similar to Case 1, we can show $\ell_2$ is linear on $\R_+$,

that is, $\ell_2(x)=b x$ for some $b>0$ on $\R_+$. We next prove that $\ell_1$ is also linear on $\R_+$ by contradiction. Suppose that $\ell_1$ is not linear on $\R_+$. The first order condition of the minimization problem $\inf_{x\in\R}\E[\ell(X_\epsilon^*-x)]$ gives (see, e.g., \cite[Proposition~1]{bellini2014generalized})
\begin{equation*}
    \E \left[ \ell_{1+}^{\prime}((X^*_\epsilon - x_{\sup})_+) \mathbbm{1}_{\{X^*_\epsilon \ge x_{\sup}\}} \right] \ge \E \left[ \ell_{2-}^{\prime}((X^*_\epsilon - x_{\sup})_-) \mathbbm{1}_{\{X^*_\epsilon < x_{\sup}\}} \right],
\end{equation*}
where $\ell_{1+}^{\prime}$ and $\ell_{2-}^{\prime}$ denote the right derivative of $\ell_1$ and the left derivative of $\ell_2$, respectively. Denoting $\pi = \P(X^*_\epsilon = x_{\sup})$, we have
\begin{equation}
    \pi \ell_{1+}^{\prime}(0) \ge (1-\pi) b. \label{eq:first-order}
\end{equation}
The inequality is in fact an equality, since otherwise, we can transform a small mass from $x_{\sup}$ to $0$ such that \eqref{eq:first-order} still holds, implying that $x^*$ remains a minimizer and the objective value $\rho(X)$ non-decreases, while the $p$-th moment decreases, contradicting \eqref{eq:cond-rho}. Specifically, noting \eqref{eq:first-order} implies $\pi>0$, define $G=(\pi'-\pi)\delta_{x_{\sup}} + (\pi-\pi')\delta_0 + F^*_\epsilon$ for $\pi'<\pi$ such that  $\pi' \ell_{1+}^{\prime}(0) \ge (1-\pi') b.$ This implies   $$\rho^G(X)=\inf_{x}\E^G[\ell(X-x)]=\E^G[\ell(X-x^*)] \ge \E^{F^*_\epsilon}[\ell(X-x^*)]=\rho^{F^*_\epsilon}(X),$$ where the inequality follows from  $x_{\sup}>0$.  Moreover, noting that $\epsilon':=W_p(G,F_0) < W_p( F^*_\epsilon,F_0)$, by \eqref{eq:cond-rho}, we have $\rho^G(X) \le \rho^{F_0}(X) +\epsilon' <\epsilon$, which yields a  contradiction.

Therefore, we have
\begin{equation}
    \pi \ell_{1+}^{\prime}(0) = (1-\pi) b. \label{eq:f.o.c-equality}
\end{equation}
Define $a:= \ell_{1+}^{\prime}(0)>0$ and $\tilde{\ell}(x) := a x_+ + b x_-$, $x\in\R$. By the convexity of $\ell_1$ and $\ell_1(0)=0$, we have  $\ell(x) \ge \tilde{\ell}(x)$ on $\R$ and $\ell(x) = \tilde{\ell}(x)$ on $(-\infty,0]$. We claim that
\begin{equation}\label{eq:260208-3}
    \sup_{F\in \mathbb{B}_p(\delta_0,\varepsilon)} \inf_{x\in\R} \E \left[ \tilde{\ell}(X - x) \right] = \sup_{F\in \mathbb{B}_p(\delta_0,\varepsilon)} \inf_{x\in\R} \E \left[ \ell(X - x) \right] = \varepsilon.
\end{equation}
To see it, it suffices to show \begin{equation*}
    \sup_{F\in \mathbb{B}_p(\delta_0,\varepsilon)} \inf_{x\in\R} \E \left[ \tilde{\ell}(X - x) \right] \ge \sup_{F\in \mathbb{B}_p(\delta_0,\varepsilon)} \inf_{x\in\R} \E \left[ \ell(X - x) \right] = \varepsilon.
\end{equation*}
Note that $F^*_\epsilon\in \mathbb{B}_p(\delta_0,\varepsilon)$ is a feasible distribution for the left-hand side  problem and
\eqref{eq:f.o.c-equality} is also the first order condition of $\inf_{x\in\R} \E \left[ \tilde{\ell}(X^*_\epsilon - x) \right]$. Thus, $x_{\sup}$ is also a minimizer of $\inf_{x\in\R} \E \left[ \tilde{\ell}(X^*_\epsilon - x) \right]$. We have
\begin{equation*}
    \sup_{F\in \mathbb{B}_p(\delta_0,\varepsilon)} \inf_{x\in\R} \E \left[ \tilde{\ell}(X - x) \right] \ge \inf_{x\in\R} \E \left[ \tilde{\ell}(X^*_\epsilon - x) \right] = \E \left[ \tilde{\ell}(X^*_\epsilon - x_{\sup}) \right] = \E \left[ \ell(X^*_\epsilon - x_{\sup}) \right] = \varepsilon,
\end{equation*}
where the second equality follows from  that $X^*_\epsilon - x_{\sup} \le 0$ almost surely. This proves \eqref{eq:260208-3}.

Finally, we show that $\ell(x) = \tilde{\ell}(x)$ for all $x\in\R$.
Assume the contrary, i.e., there exists some $x_0>0$ such that $\ell(x_0) > \tilde{\ell}(x_0)$. Due to the convexity of $\ell_1$ and linearity of $\tilde{\ell}$ on $\R_+$, we have  $\ell(x) > \tilde{\ell}(x)$ for any $x>x_0$.

We now apply some results on the problem with loss function $\tilde{\ell}$ established in our previous sections.
Applying the minmax theorem (see, e.g., \cite[Theorem~2.6]{shafieezadeh2023new}), we have
\begin{equation} \label{eq:minmax}
    \inf_{x\in\R} \sup_{F\in \mathbb{B}_p(\delta_0,\varepsilon)} \E^F \left[ \tilde{\ell}(X - x) \right] = \sup_{F\in \mathbb{B}_p(\delta_0,\varepsilon)} \inf_{x\in\R} \E^F \left[ \tilde{\ell}(X - x) \right] = \varepsilon,
\end{equation}
where the second equality follows from \eqref{eq:260208-3}, and the same argument is valid for $\ell$ as well.
By Proposition~\ref{prop:worst-case-p}, there exist  a two-point distribution $\tilde{F}^*_{\epsilon} = \alpha \delta_{\tilde{x}_1 } + (1-\alpha)\delta_{\tilde{x}_2}$ with $\alpha=a/(a+b)$, and  $\tilde{x}^*={\varepsilon}\left(\alpha^{q}-(1-\alpha)^{q}\right) c_{\alpha,p}^{1-q}/{q} \in (\tilde{x}_2,\tilde{x}_1)$   such that

\begin{equation*}
     \E^{\tilde{F}^*_{\epsilon}} \left[ \tilde{\ell}(\tilde{X}^*_{\epsilon} - \tilde{x}^*) \right] = \sup_{F\in \mathbb{B}_p(\delta_0,\varepsilon)}  \E^F \left[ \tilde{\ell}(X - \tilde{x}^*) \right]  = \varepsilon,
\end{equation*}
where $\tilde{X}^*_{\epsilon}\sim \tilde{F}^*_{\epsilon}$. Note that any point in $(\tilde{x}_2,\tilde{x}_1)$ including $x^*$ is an $\alpha$-quantile of $\tilde{F}^*_{\epsilon}$, and thus a minimizer of $\inf_{x\in\R} \E^{\tilde{F}^*_{\epsilon}} \left[ \tilde{\ell}(\tilde{X}^*_{\epsilon} - x) \right]$. We then have
\begin{equation*}
    \inf_{x\in\R} \E^{\tilde{F}^*_{\epsilon}} \left[ \tilde{\ell}(\tilde{X}^*_{\epsilon} - x) \right] = \E^{\tilde{F}^*_{\epsilon}} \left[ \tilde{\ell}(\tilde{X}^*_{\epsilon} - \tilde{x}^*) \right] = \varepsilon.
\end{equation*}

When the Wasserstein radius $\varepsilon$ is sufficiently large, we have $\tilde{x}_1-\tilde{x}^*> x_0$. Thus, by $\ell\ge\tilde{\ell}$ and $\ell>\tilde{\ell}$ for $x>x_0$, we have $\E^{\tilde{F}^*_{\epsilon}} \left[ {\ell}(\tilde{X}^*_{\epsilon} - \tilde{x}^*) \right] > \E^{\tilde{F}^*_{\epsilon}} \left[ \tilde{\ell}(\tilde{X}^*_{\epsilon} - \tilde{x}^*) \right] = \varepsilon,$ which gives
\begin{equation} \label{eq:final-contradiction-1}
    \sup_{F\in \mathbb{B}_p(\delta_0,\varepsilon)} \E^F \left[ {\ell}(X - \tilde{x}^*) \right] > \varepsilon .
\end{equation}
By Lemma~\ref{prop:quantile} and the proof of Theorem~\ref{thm:quantile}, we know that $\tilde{x}^*$ is the unique minimizer to the left-hand side problem of \eqref{eq:minmax},   and thus, for any $x\ne \tilde{x}^*$, we have
\begin{equation} \label{eq:final-contradiction-2}
    \sup_{F\in \mathbb{B}_p(\delta_0,\varepsilon)} \E^F \left[ {\ell}(X - x) \right] \ge \sup_{F\in \mathbb{B}_p(\delta_0,\varepsilon)} \E^F \left[ \tilde{\ell}(X - x) \right] > \varepsilon,
\end{equation}
where the first inequality follows from the $\ell\ge \tilde{\ell}$. Following the proof of Lemma~\ref{lem:ell} (\ref{item:lem-ell-ii}), one can show that the infimum of $\inf_{x\in\R} \sup_{F\in \mathbb{B}_p(\delta_0,\varepsilon)} \E^F \left[ {\ell}(X - x) \right]$ is attained, as the inner supremum inherits the convexity, continuity, and coercivity of $\ell$. Therefore, combining this with \eqref{eq:final-contradiction-1} and \eqref{eq:final-contradiction-2}, we have
\begin{equation*}
    \sup_{F\in \mathbb{B}_p(\delta_0,\varepsilon)} \rho^F(X) =
    \sup_{F\in \mathbb{B}_p(\delta_0,\varepsilon)} \inf_{x\in\R} \E^F \left[ {\ell}(X - x) \right] = \inf_{x\in\R} \sup_{F\in \mathbb{B}_p(\delta_0,\varepsilon)} \E^F \left[ {\ell}(X - x) \right] > \varepsilon,
\end{equation*}
a contradiction.
    \item [3.] If $x_{\sup}>x=x_{\inf}$, this case is similar to Case 2.

\end{itemize}
Combining the above three cases, we complete the proof.
\end{proof}

\subsubsection{Proofs of Section~\ref{sec:worst-case}} \label{sec:proof2}

\begin{proof}[Proof of Proposition~\ref{prop:worst-case-p1}]
        First we verify that $F^*\in \mathbb{B}_1(F_0,\varepsilon)$. Let $\pi$ be the joint distribution of $\left((\mathbf{X}_0,Y_0),(\mathbf{X}^*,Y^*)\right)$. Then, $\pi\in \Pi(F_0,F^*)$ and
    \begin{align*}
        \E^{\pi}\left[\|(\mathbf{X}_0,Y_0)-(\mathbf{X}^*,Y^*)\|\right] 
        &= \E^{F_0}\left[\left\|\frac{\varepsilon\mathbbm{1}_{\{Y_0 - \boldsymbol{\beta}^\top \mathbf{X}_0 - s \ge 0 \}}}{\P\left(Y_0 - \boldsymbol{\beta}^\top \mathbf{X}_0 - s \ge 0 \right)}\mathbf{v}_{\boldsymbol{\beta}   }\right\|\right] \\
        &= \E^{F_0}\left[\frac{\varepsilon\mathbbm{1}_{\{Y_0 - \boldsymbol{\beta}^\top \mathbf{X}_0 - s \ge 0 \}}}{\P\left(Y_0 - \boldsymbol{\beta}^\top \mathbf{X}_0 - s \ge 0 \right)}\right] \|  \mathbf{v}_{\boldsymbol{\beta}   } \| = \varepsilon.
    \end{align*}
    Thus, by the definition of Wasserstein distance, $W_1(F_0,F^*)\le \varepsilon$, which implies that $F^*\in \mathbb{B}_1(F_0,\varepsilon)$. 
    Next, we show that $F^*$ attains the optimal value of the problem \eqref{sup}. Lemmas~\ref{projection} and \ref{prop:quantile} yield that for $\alpha\geq1/2$
    \begin{align*}
        \sup_{F\in \mathbb{B}_1(F_0,\varepsilon)} 
        \E^F \left[ \ell_{\alpha}(Y - \boldsymbol{\beta}^\top \mathbf{X}-s) \right] 
        &= \E^{F_0} \left[ \ell_{\alpha}(Y - \boldsymbol{\beta}^\top \mathbf{X}-s) \right] + \alpha\varepsilon\|\overline{\boldsymbol{\beta}}\|_*.
    \end{align*} 
    Let $Z^* = Y^* - \boldsymbol{\beta}^\top \mathbf{X}^* - s$ and $Z_0 = Y_0 - \boldsymbol{\beta}^\top \mathbf{X}_0 - s$.
    By the construction of $(\mathbf{X}^*,Y^*)$, we have
    \begin{align*}
        Z^* &= (-\boldsymbol{\beta},1)^\top \left(\mathbf{X}^*,Y^*\right) - s = (-\boldsymbol{\beta},1)^\top \left(\mathbf{X}_0,Y_0\right) + \frac{\varepsilon\mathbbm{1}_{\{Z_0 \ge 0 \}}}{\P\left(Z_0 \ge 0 \right)} (-\boldsymbol{\beta},1)^\top\mathbf{v}_{\boldsymbol{\beta}}  - s = Z_0 + \frac{\varepsilon\mathbbm{1}_{\{Z_0 \ge 0 \}}}{\P\left(Z_0 \ge 0 \right)}\|\overline{\boldsymbol{\beta}}\|_*,
    \end{align*}
    where the last equality follows from the definition of $\mathbf{v}_{\boldsymbol{\beta}}.$
    Note that $Z^*_- = {Z_0}_-$, and $Z^*\ge 0$ if and only if $Z_0\ge 0$. It follows that
    \begin{align*}
        \E^{F^*} \left[ \ell_{\alpha}(Y - \boldsymbol{\beta}^\top \mathbf{X}-s) \right] 
        &= \E \left[ \alpha Z^*_+ + (1-\alpha)Z^*_- \right]\\
        &= \E \left[\alpha Z^*\mathbbm{1}_{\{Z^* \ge 0\}} + (1-\alpha)Z^*_- \right]\\
        &= \E \left[\alpha Z_0\mathbbm{1}_{\{Z_0 \ge 0\}} + \frac{\alpha\varepsilon\mathbbm{1}^2_{\{Z_0 \ge 0 \}}}{\P\left(Z_0 \ge 0 \right)} 
        \|\overline{\boldsymbol{\beta}}\|_* + (1-\alpha){Z_0}_- \right]\\
        &= \E \left[\alpha {Z_0}_+ + (1-\alpha){Z_0}_- \right] + \alpha \varepsilon \|\overline{\boldsymbol{\beta}}\|_* \\
        &= \E^{F_0} \left[ \ell_{\alpha}(Y - \boldsymbol{\beta}^\top \mathbf{X}-s) \right] + \alpha\varepsilon\|\overline{\boldsymbol{\beta}}\|_*,
    \end{align*}
    where the fourth equality holds due to the fact that $\E  [\mathbbm{1}^2_{\{Z_0 \ge 0 \}} ] = \P\left(Z_0 \ge 0 \right)$.

    Now consider the case that $\p(Z_0 \ge 0) = 0$. For any $F\in \mathbb{B}_1(F_0,\varepsilon)$, define $Z = Y - \boldsymbol{\beta}^\top \mathbf{X}-s$ with $(\mathbf{X},Y)\sim F$. We have $\E^\pi|Z-Z_0| \le \|\overline{\boldsymbol{\beta}}\|_* \E^\pi[\|(\mathbf{X},Y) - (\mathbf{X}_0,Y_0)\|] \le \varepsilon \|\overline{\boldsymbol{\beta}}\|_*$, where $\pi$ is the optimal transport plan between $F_0$ and $F$. When $Z\le 0$, we have $\ell_\alpha(Z) - \ell_\alpha(Z_0) \le (1-\alpha)|Z-Z_0| < \alpha|Z-Z_0|$; when $Z>0$, we have $\ell_\alpha(Z) - \ell_\alpha(Z_0) = \alpha Z - (\alpha-1)Z_0 < \alpha|Z-Z_0|$. We   thus conclude that $\ell_\alpha(Z) - \ell_\alpha(Z_0) < \alpha|Z-Z_0|$ holds $\pi$-a.s., which implies $\E^\pi \left[ \ell_\alpha(Z) - \ell_\alpha(Z_0) \right] < \alpha\varepsilon\|\overline{\boldsymbol{\beta}}\|_*$, and thus the supremum in \eqref{sup} is not attained by any distribution in $\mathbb{B}_1(F_0,\varepsilon)$.
    This completes the proof.
\end{proof}

\begin{proof}[Proof of Proposition~\ref{prop:worst-case-pinfty}]
    First we verify that $F^*\in \mathbb{B}_\infty(F_0,\varepsilon)$. Let $\pi$ be the joint distribution of $\left((\mathbf{X}_0,Y_0),(\mathbf{X}^*,Y^*)\right)$. Then, $\pi\in \Pi(F_0,F^*)$ and
    \begin{align*}
        W_\infty(F_0,F^*)
        &\le \esssup_{\pi}\|(\mathbf{X}_0,Y_0)-(\mathbf{X}^*,Y^*)\| \\
        &= \esssup_{F_0}\left\|\varepsilon\operatorname{sgn}\left(Y_0 - \boldsymbol{\beta}^\top \mathbf{X}_0 - s + (2\alpha-1)\varepsilon\left\|\overline{\boldsymbol{\beta}}\right\|_*\right) \mathbf{v}_{\boldsymbol{\beta}}\right\|\\
        &= \varepsilon.
    \end{align*}

    Next, we show that $F^*$ attains the supremum in \eqref{sup}. Lemma~\ref{projection} and \ref{prop:quantile} yield
    \begin{align*}
        \sup_{F\in \mathbb{B}_\infty(F_0,\varepsilon)}
        \E^F \left[ \ell_{\alpha}(Y - \boldsymbol{\beta}^\top \mathbf{X}-s) \right]
        &= \E^{F_0} \left[ \ell_{\alpha}(Y - \boldsymbol{\beta}^\top \mathbf{X}-s+(2\alpha-1)\varepsilon)\left\|\overline{\boldsymbol{\beta}}\right\|_* \right] + 2\alpha(1-\alpha)\varepsilon\|\overline{\boldsymbol{\beta}}\|_*.
    \end{align*}
    Let $Z^* = Y^* - \boldsymbol{\beta}^\top \mathbf{X}^* - s$ and $Z_0 = Y_0 - \boldsymbol{\beta}^\top \mathbf{X}_0 - s$. By the construction of $(\mathbf{X}^*,Y^*)$, we have
    \begin{align*}
        Z^* &= Z_0 + \varepsilon\operatorname{sgn}\left(Z_0 + (2\alpha-1)\varepsilon\|\overline{\boldsymbol{\beta}}\|_*\right)\|\overline{\boldsymbol{\beta}}\|_*\\
        &=\begin{cases}
        Z_0 + \varepsilon\|\overline{\boldsymbol{\beta}}\|_*, & Z_0 + (2\alpha-1)\varepsilon\|\overline{\boldsymbol{\beta}}\|_*\geq 0,\\
        Z_0 - \varepsilon\|\overline{\boldsymbol{\beta}}\|_*, & Z_0 + (2\alpha-1)\varepsilon\|\overline{\boldsymbol{\beta}}\|_*< 0.
        \end{cases}
    \end{align*}
    From the above expression, we can see that $Z^* $ and $Z_0 + (2\alpha-1)\varepsilon\|\overline{\boldsymbol{\beta}}\|_*$ have the same sign as $2\alpha-1\in(-1,1)$. Thus, we have
    \begin{align*}
        \E^{F^*} \left[ \ell_{\alpha}(Y - \boldsymbol{\beta}^\top \mathbf{X}-s) \right]
        &= \E \left[ \alpha Z^*_+ + (1-\alpha)Z^*_- \right]\\
        &= \E \left[\alpha Z^*\mathbbm{1}_{\{Z^* \ge 0\}} + (\alpha-1)Z^*\mathbbm{1}_{\{Z^* < 0\}} \right]\\
        &= \E \left[\alpha \left(Z_0 + \varepsilon\|\overline{\boldsymbol{\beta}}\|_*\right)\mathbbm{1}_{\{Z_0 + (2\alpha-1)\varepsilon\|\overline{\boldsymbol{\beta}}\|_*\geq 0\}} \right.\\
        &\quad\left. + (\alpha-1)\left(Z_0 - \varepsilon\|\overline{\boldsymbol{\beta}}\|_*\right)\mathbbm{1}_{\{Z_0 + (2\alpha-1)\varepsilon\|\overline{\boldsymbol{\beta}}\|_*< 0\}} \right]\\
        &= \E \left[\alpha \left(Z_0 + (2\alpha-1)\varepsilon\|\overline{\boldsymbol{\beta}}\|_*\right)\mathbbm{1}_{\{Z_0 + (2\alpha-1)\varepsilon\|\overline{\boldsymbol{\beta}}\|_*\geq 0\}} \right.\\
        &\quad + (\alpha-1)\left(Z_0 + (2\alpha-1)\varepsilon\|\left(-\boldsymbol{\beta},1\|_*\right)\|_*\right)\mathbbm{1}_{\{Z_0 + (2\alpha-1)\varepsilon\|\overline{\boldsymbol{\beta}}\|_*< 0\}} \\
        &\left. \quad + 2\alpha(1-\alpha)\varepsilon\|\overline{\boldsymbol{\beta}}\|_*\left(\mathbbm{1}_{\{Z_0 + (2\alpha-1)\varepsilon\|\overline{\boldsymbol{\beta}}\|_*\geq 0\}} + \mathbbm{1}_{\{Z_0 + (2\alpha-1)\varepsilon\|\overline{\boldsymbol{\beta}}\|_*< 0\}}\right) \right]\\
        &= \E \left[\alpha \left(Z_0 + (2\alpha-1)\varepsilon\|\overline{\boldsymbol{\beta}}\|_*\right)_+ + (\alpha-1)\left(Z_0 + (2\alpha-1)\varepsilon\|\overline{\boldsymbol{\beta}}\|_*\right)_- \right] \\
        &\quad + 2\alpha(1-\alpha)\varepsilon\|\overline{\boldsymbol{\beta}}\|_* \\
        &= \E^{F_0} \left[ \ell_{\alpha}(Y - \boldsymbol{\beta}^\top \mathbf{X}-s+(2\alpha-1)\varepsilon)\left\|\overline{\boldsymbol{\beta}}\right\|_* \right] + 2\alpha(1-\alpha)\varepsilon\|\overline{\boldsymbol{\beta}}\|_*.
    \end{align*}
    This completes the proof.
\end{proof}

\begin{proof}[Proof of Proposition~\ref{prop:worst-case-p}]
    First we verify that $F^*\in \mathbb{B}_p(F_0,\varepsilon)$. Let $\pi$ be the joint distribution of $\left((\mathbf{X}_0,Y_0),(\mathbf{X}^*,Y^*)\right)$. Then, $\pi\in \Pi(F_0,F^*)$ and
    \begin{align*}
        \E^{\pi}\left[\|(\mathbf{X}_0,Y_0)-(\mathbf{X}^*,Y^*)\|^p\right] 
        &= \E^{F_0}\left[\left|\varepsilon c_{\alpha,p}^{1-q}\left(\alpha^{q-1}\mathbbm{1}_{A}-(1-\alpha)^{q-1}\mathbbm{1}_{A^c}\right)\right|^p\right] \|\mathbf{v}_{\boldsymbol{\beta}^*} \|^p\\
        &= \varepsilon^p c_{\alpha,p}^{p(1-q)}\E^{F_0}\left[\alpha^{p(q-1)}\mathbbm{1}_{A}+(1-\alpha)^{p(q-1)}\mathbbm{1}_{A^c}\right]\\
        &= \varepsilon^p c_{\alpha,p}^{p(1-q)}\left(\alpha^{p(q-1)}(1-\alpha)+(1-\alpha)^{p(q-1)}\alpha\right)\\
        &= \varepsilon^p c_{\alpha,p}^{-q}\left(\alpha^q(1-\alpha)+(1-\alpha)^q\alpha\right)\\
        &= \varepsilon^p.
    \end{align*}
    The fourth equality uses the fact that $1/p+1/q=1$ and the last equality follows from the definition of $c_{\alpha,p}$ in \eqref{eq:C_alphap}.
    Thus, by the definition of Wasserstein distance, $W_p(F_0,F^*)\le \varepsilon$, which implies that $F^*\in \mathbb{B}_p(F_0,\varepsilon)$.

    Next, we show that $F^*$ attains the supremum in \eqref{sup}. Since $(\boldsymbol{\beta}^*,s^*)$ is an optimal solution to \eqref{OP}, by Theorem~\ref{thm:quantile} we have
    \begin{align*}
        \sup_{F\in \mathbb{B}_p(F_0,\varepsilon)}
        \E^F \left[ \ell_{\alpha}(Y - \boldsymbol{\beta}^{*\top} \mathbf{X}-s^*) \right]
        &= \E^{F_0} \left[ \ell_{\alpha}(Y - \boldsymbol{\beta}^{*\top} \mathbf{X}-\overline{s}^*) \right] + c_{\alpha,p}\varepsilon\|\overline{\boldsymbol{\beta}^*}\|_*,
    \end{align*}
    with $\overline{s}^*$ defined in \eqref{eq:relationbeta0}. Note that $\overline{s}^*$ is in fact the $\alpha$-quantile of $Y_0 - \boldsymbol{\beta}^{*\top} \mathbf{X}_0$ for a fixed $\boldsymbol{\beta}^*$, since it solves
        $\inf_{\overline{s}}  \E \left[ \ell_{\alpha}(Y_0 - \boldsymbol{\beta}^{*\top} \mathbf{X}_0 - \overline{s}) \right].$
    Thus by the proposition's assumption $A\subseteq \{Y_0 - \boldsymbol{\beta}^{*\top} \mathbf{X}_0 \ge \overline{s}^*\}$ and $\P(A)=1-\alpha$, we have
    \begin{align*}
    \left\{Y_0 - \boldsymbol{\beta}^{\top} \mathbf{X}_0 > \overline{s}^*\right\} &\subseteq A \subseteq \left\{Y_0 - \boldsymbol{\beta}^{\top} \mathbf{X}_0 \geq \overline{s}^*\right\}
    \end{align*}
    almost surely.

    Furthermore, we have $(Y_0 - \boldsymbol{\beta}^{*\top} \mathbf{X}_0 - \overline{s}^*)\mathbbm{1}_A = (Y_0 - \boldsymbol{\beta}^{*\top} \mathbf{X}_0 - \overline{s}^*)_+$ almost surely due to the fact that
    \begin{align*}
        (Y_0 - \boldsymbol{\beta}^{*\top} \mathbf{X}_0 - \overline{s}^*)_+ &= (Y_0 - \boldsymbol{\beta}^{*\top} \mathbf{X}_0 - \overline{s}^*)\mathbbm{1}_{\{Y_0 - \boldsymbol{\beta}^{*\top} \mathbf{X}_0 > \overline{s}^*\}}\\
        &\le (Y_0 - \boldsymbol{\beta}^{*\top} \mathbf{X}_0 - \overline{s}^*)\mathbbm{1}_A \\
        &\le (Y_0 - \boldsymbol{\beta}^{*\top} \mathbf{X}_0 - \overline{s}^*)\mathbbm{1}_{\{Y_0 - \boldsymbol{\beta}^{*\top} \mathbf{X}_0 \ge \overline{s}^*\}} \\
        &= (Y_0 - \boldsymbol{\beta}^{*\top} \mathbf{X}_0 - \overline{s}^*)_+.
    \end{align*}
    Similarly, for the complement $A^c$, we have $(Y_0 - \boldsymbol{\beta}^{*\top} \mathbf{X}_0 - \overline{s}^*)\mathbbm{1}_{A^c} = -(Y_0 - \boldsymbol{\beta}^{*\top} \mathbf{X}_0 - \overline{s}^*)_-$. By the construction of $(\mathbf{X}^*,Y^*)$, we have
    \begin{align*}
        Y^* - \boldsymbol{\beta}^{*\top} \mathbf{X}^* - s^* =& Y_0 - \boldsymbol{\beta}^{*\top} \mathbf{X}_0 - s^* + \varepsilon c_{\alpha,p}^{1-q}\left(\alpha^{q-1}\mathbbm{1}_{A}-(1-\alpha)^{q-1}\mathbbm{1}_{A^c}\right)\|\overline{\boldsymbol{\beta}^*}\|_*\\
        = &Y_0 - \boldsymbol{\beta}^{*\top} \mathbf{X}_0 - \overline{s}^* - \frac{\varepsilon}{q} \left(\alpha^{q}-(1-\alpha)^{q}\right) c_{\alpha,p}^{1-q}\|\overline{\boldsymbol{\beta}^*}\|_*\\
        &\qquad + \varepsilon c_{\alpha,p}^{1-q}\left(\alpha^{q-1}\mathbbm{1}_{A}-(1-\alpha)^{q-1}\mathbbm{1}_{A^c}\right)\|\overline{\boldsymbol{\beta}^*}\|_*\\
        =& Y_0 - \boldsymbol{\beta}^{*\top} \mathbf{X}_0 - \overline{s}^* + \varepsilon c_{\alpha,p}^{1-q}\|\overline{\boldsymbol{\beta}^*}\|_*\\
        &\qquad \times \left(\alpha^{q-1}\mathbbm{1}_{A}-(1-\alpha)^{q-1}\mathbbm{1}_{A^c} - \frac{1}{q} \left(\alpha^{q}-(1-\alpha)^{q}\right)\right).
    \end{align*}

    We can see that $Y^* - \boldsymbol{\beta}^{*\top} \mathbf{X}^* - s^*$ is positive on the event $A$ and negative on $A^c$, since $Y_0 - \boldsymbol{\beta}^{*\top} \mathbf{X}_0 - \overline{s}^*$ is positive on $A$ and negative on $A^c$ from the above discussions, and the last term has a non-negative coefficient multiplying a term that is positive on $A$ and negative on $A^c$. Thus, we have
    \begin{align*}
        &\E^{F^*} \left[ \ell_{\alpha}(Y - \boldsymbol{\beta}^{*\top} \mathbf{X}-s^*) \right] \\
        = &\E \left[ \alpha \left(Y^* - \boldsymbol{\beta}^{*\top} \mathbf{X}^* - s^*\right)_+ + (1-\alpha)\left(Y^* - \boldsymbol{\beta}^{*\top} \mathbf{X}^* - s^*\right)_- \right]\\
        = &\E \left[\alpha \left(Y^* - \boldsymbol{\beta}^{*\top} \mathbf{X}^* - s^*\right)\mathbbm{1}_{A} + (\alpha-1)\left(Y^* - \boldsymbol{\beta}^{*\top} \mathbf{X}^* - s^*\right)\mathbbm{1}_{A^c} \right]\\
        = &\E \Bigg[\alpha \left(Y_0 - \boldsymbol{\beta}^{*\top} \mathbf{X}_0 - \overline{s}^* \right)\mathbbm{1}_{A} + (\alpha-1)\left(Y_0 - \boldsymbol{\beta}^{*\top} \mathbf{X}_0 - \overline{s}^* \right)\mathbbm{1}_{A^c} \\
        &\quad + \varepsilon c_{\alpha,p}^{1-q}\|\overline{\boldsymbol{\beta}^*}\|_* \left(\alpha^{q}\mathbbm{1}_{A} + (1-\alpha)^{q}\mathbbm{1}_{A^c} - \frac{1}{q} \left(\alpha^{q}-(1-\alpha)^{q}\right)\left(\alpha\mathbbm{1}_{\{U\ge \alpha\}}+(\alpha-1)\mathbbm{1}_{A^c}\right) \right) \Bigg]\\
        = &\E \left[\alpha \left(Y_0 - \boldsymbol{\beta}^{*\top} \mathbf{X}_0 - \overline{s}^* \right)_+ + (1-\alpha)\left(Y_0 - \boldsymbol{\beta}^{*\top} \mathbf{X}_0 - \overline{s}^* \right)_- \right] + \varepsilon c^{1-q}_{\alpha,p}\|\overline{\boldsymbol{\beta}^*}\|_* \left(\alpha^q(1-\alpha)+(1-\alpha)^q\alpha\right)\\
        = &\E^{F_0} \left[ \ell_{\alpha}(Y - \boldsymbol{\beta}^{*\top} \mathbf{X}-\overline{s}^*) \right] + c_{\alpha,p}\varepsilon\|\overline{\boldsymbol{\beta}^*}\|_*.
    \end{align*}
    The third equality uses the above discussions. The fourth equality holds due to the fact that $\E \left[\mathbbm{1}_{A}\right] = 1-\alpha$ and $\E \left[\mathbbm{1}_{A^c}\right] = \alpha$. The last equality follows from the definition of $c_{\alpha,p}$ in \eqref{eq:C_alphap}. This completes the proof.
\end{proof}

\subsection{Proofs of Section  \ref{sec:3}}
\subsubsection{Proof of Theorem \ref{prop:finite-sample}.}
 The proof of Theorem \ref{prop:finite-sample} relies on the following result coming from Theorem 2 of \citet{wu2022generalization}; see also \cite{ORVW22}.
\begin{lemma} \label{lem:2}
        Given $p\in[1,\infty]$, $\eta\in(0,1)$ and $\mathcal{D}\subseteq \R^{d}$, suppose that $\Gamma:=\E^{F^*}[\|\boldsymbol{\zeta}\|^s]<\infty$ for some $s>2$ and the risk measure $\rho$ satisfies
        \begin{align}
            \label{eq:loss-condition}
            \sup _{|V| \leq \lambda \varepsilon}\rho(Z+V) \geq \sup _{\mathbb{E}[|V|] \leq \varepsilon} \rho(Z+V), \quad \forall Z \in L^1, \varepsilon \geq 0 ,
        \end{align}
        for some $\lambda\geq 1$. Then, we have
        \begin{align*}
            \P\left(\rho^{F^*}\left(\boldsymbol{\beta}^\top \boldsymbol{\zeta}\right) \leq \sup_{F\in \mathbb B_p(\widehat{F}_N,\varepsilon_{N}^0(\eta))}\rho^F\left(\boldsymbol{\beta}^\top \boldsymbol{\zeta}\right),\forall  \boldsymbol{\beta} \in \mathcal{D}\right) \geq 1-\eta,
        \end{align*}
        where $\varepsilon_N^0(\eta)= c_{1/2} \lambda \log(2N+1)^{{1}/{s}}/{\sqrt{N}}$ and  $ c_{1/2} $ is defined by \eqref{eq:c-alpha}.

    \end{lemma}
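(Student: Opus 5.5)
The plan is to reduce the multivariate claim to a statement about one-dimensional projections, and then to a uniform concentration bound for projected empirical measures.

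\emph{Step 1 (condition \eqref{eq:loss-condition}).} Fix $\boldsymbol\beta\in\mathcal D$ and put $r:=\varepsilon\|\boldsymbol\beta\|_*$. Let $\widehat Z=\boldsymbol\beta^\top\widehat{\boldsymbol\zeta}$ with $\widehat{\boldsymbol\zeta}\sim\widehat F_N$, realized on the atomless space. Pick a unit vector $\mathbf v$ with $\boldsymbol\beta^\top\mathbf v=\|\boldsymbol\beta\|_*$.
\begin{itemize}
\item For any $V$ with $|V|\le \lambda r$, the shifted vector $\widehat{\boldsymbol\zeta}+(V/\|\boldsymbol\beta\|_*)\mathbf v$ is within $\ell_\infty$-transport distance $\lambda\varepsilon$ of $\widehat F_N$. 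Its projection is $\widehat Z+V$.
\item Since $\mathbb B_\infty\subseteq\mathbb B_p$, this gives
\[
\sup_{F\in\mathbb B_p(\widehat F_N,\lambda\varepsilon)}\rho^F(\boldsymbol\beta^\top\boldsymbol\zeta)\ \ge\ \sup_{|V|\le\lambda r}\rho(\widehat Z+V)\ \ge\ \sup_{\E|V|\le r}\rho(\widehat Z+V),
\]
where the last inequality is exactly \eqref{eq:loss-condition}.
\end{itemize}
This is where the order $p$ drops out: a type-$1$ perturbation budget in the projected space is dominated by an $L^\infty$ budget that every $p$-ball contains. That is why the radius does not depend on $p$, apart from the factor $\lambda$.

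\emph{Step 2 (coupling).} It then suffices to show that, with probability at least $1-\eta$, simultaneously for all $\boldsymbol\beta$ we can write $\boldsymbol\beta^\top\boldsymbol\zeta^*\overset{d}{=}\widehat Z+V$ with $\E|V|\le \varepsilon\|\boldsymbol\beta\|_*$, where $\varepsilon=\varepsilon^0_N(\eta)/\lambda$ and $\boldsymbol\zeta^*\sim F^*$. Then $\rho^{F^*}(\boldsymbol\beta^\top\boldsymbol\zeta)$ is bounded by the right-hand side of Step 1.
\begin{itemize}
\item On an atomless space the comonotone coupling of the two one-dimensional laws realizes $W_1$.
\item So it is enough to have
\[
\sup_{\|\boldsymbol\beta\|_*=1}W_1\big(F^*\circ(\boldsymbol\beta^\top\cdot)^{-1},\widehat F_N\circ(\boldsymbol\beta^\top\cdot)^{-1}\big)\le c_{1/2}\log(2N+1)^{1/s}/\sqrt N
\]
with probability $1-\eta$.
\item By positive homogeneity, restricting to $\|\boldsymbol\beta\|_*=1$ is without loss of generality.
\end{itemize}

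\emph{Step 3 (uniform projected $W_1$ bound).} In one dimension, $W_1=\int|F_N(t)-F(t)|\,{\rm d}t$, so the quantity to control is
\[
\sup_{\boldsymbol\beta}\int_{\R}\big|\widehat{\P}(\boldsymbol\beta^\top\boldsymbol\zeta\le t)-\P(\boldsymbol\beta^\top\boldsymbol\zeta\le t)\big|\,{\rm d}t .
\]
\begin{itemize}
\item \emph{Truncation.} Split the $t$-integral at a level $T$. On $|t|\le T$, the indexing class is the class of halfspaces in $\R^{d+1}$, whose VC dimension is $d+2$.
\item \emph{Bounded part.} Use a weighted, self-normalized VC inequality: bound the deviation by $\sqrt{\P(\text{halfspace})(1-\P(\text{halfspace}))}$ times $\sqrt{(d+2)/N}$ plus a $\log(1/\eta)$ term. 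This is the source of the constants $360\sqrt{d+2}$ and $2\sqrt{2\log(3/\eta)}$.
\item \emph{Weight integrability.} Integrate these weights in $t$ using Markov's bound $\P(|\boldsymbol\beta^\top\boldsymbol\zeta|>t)\le\Gamma t^{-s}$. The weight $\sqrt{\Gamma}\,t^{-s/2}$ is integrable precisely because $s>2$, which produces the $\sqrt{\Gamma}/(s-2)$ factor.
\item \emph{Tails.} Handle $|t|>T$ with Markov/Chebyshev applied to the empirical $s$-th moment, using a union bound with budget $\eta/3$.
\item \emph{Choice of $T$.} A grid of $2N+1$ truncation levels, or equivalently $T\asymp N^{1/s}$, gives the $\log(2N+1)^{1/s}$ factor.
\end{itemize}
Collecting the three error contributions with budget $\eta/3$ each yields $c_{1/2}$.

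The main obstacle is Step 3. The uniform-in-direction bound must be dimension-free in rate, with only a $\sqrt{d}$ constant, under merely $s>2$ moments. The delicate point is the self-normalized, variance-weighted VC inequality near the tails, where $\P(\text{halfspace})$ is small. I would first try the ratio-type inequality of Vapnik--Chervonenkis/Anthony--Shawe-Taylor, peeled over dyadic ranges of the level $t$, and combine it with the moment bound for the tail probabilities.
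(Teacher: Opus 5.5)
Your Steps 1 and 2 are correct. They are the reduction behind this lemma. The paper itself gives no proof and imports the statement from Theorem 2 of \citet{wu2022generalization}. Three ingredients reduce everything to the single event $\sup_{\|\boldsymbol{\beta}\|_*=1}W_1\big(F^*\circ(\boldsymbol{\beta}^\top\cdot)^{-1},\widehat{F}_N\circ(\boldsymbol{\beta}^\top\cdot)^{-1}\big)\le c_{1/2}\log(2N+1)^{1/s}/\sqrt N$: lifting a one-dimensional perturbation along $\mathbf{v}$, using $\mathbb{B}_\infty\subseteq\mathbb{B}_p$ together with \eqref{eq:loss-condition}, and realizing $W_1$ by a comonotone coupling on the atomless space. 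That max-sliced concentration bound is the ingredient supplied by the paper's pointer to \citet{ORVW22}, so citing it would close your argument.

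The gap is your own Step 3, which has to produce this specific radius. As you arrange it, it cannot. Write $P,P_N$ for the true and empirical laws of $\boldsymbol{\zeta}$ and $S_{\mathcal H}$ for the growth function of halfspaces.
\begin{enumerate}[(i)]
\item There is no self-normalized bound of the form $|P_N(H)-P(H)|\lesssim\sqrt{P(H)(1-P(H))}\sqrt{(d+2)/N}$ plus a $\log(1/\eta)$ term that holds for all halfspaces at once with no $N$-dependent factor. Already for $d=1$, $\sqrt N\sup_t|F_N(t)-F(t)|/\sqrt{F(t)(1-F(t))}\to\infty$ in probability, because the Brownian bridge divided by $\sqrt{t(1-t)}$ is unbounded near the endpoints.
\item The ratio-type VC inequalities you name do hold, but they carry $\sqrt{\log S_{\mathcal H}(2N)}\asymp\sqrt{(d+2)\log(2N+1)}$. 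Using them on the whole bulk gives a radius of order $\sqrt{\log(2N+1)/N}$, not $\log(2N+1)^{1/s}/\sqrt N$.
\item If you bound $\int_{|t|>T}|F_N-F|\,{\rm d}t$ directly by Markov with the empirical $s$-th moment, you get a term linear in $\Gamma/\eta$, not the $\sqrt{3\Gamma/\eta}$ appearing in $c_{1/2}$.
\item Taking $T\asymp N^{1/s}$, or a grid of $2N+1$ levels, does not generate a factor $\log(2N+1)^{1/s}$ at all.
\end{enumerate}

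The arrangement that produces exactly the form of $c_{1/2}$ is the reverse of yours.
\begin{itemize}
\item \emph{Bulk $[-T,T]$.} Use the unweighted uniform deviation over halfspaces, $\sup_H|P_N(H)-P(H)|\lesssim(\sqrt{d+2}+\sqrt{\log(3/\eta)})/\sqrt N$. This comes from chaining for a VC class, which involves no $\log N$, plus bounded differences with budget $\eta/3$. You pay the interval length $2T$, and this accounts for the $360\sqrt{d+2}+2\sqrt{2\log(3/\eta)}$ part.
\item \emph{Tails $|t|>T$.} Use the ratio-type inequality, normalized by $\sqrt{P(H)}$ and $\sqrt{P_N(H)}$ respectively, with budget $\eta/3$. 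Combine it with $P(|\boldsymbol{\beta}^\top\boldsymbol{\zeta}|>t)\le\Gamma t^{-s}$ and $P_N(|\boldsymbol{\beta}^\top\boldsymbol{\zeta}|>t)\le M_N t^{-s}$. Here $M_N=\frac1N\sum_i\|\boldsymbol{\zeta}_i\|^s\le 3\Gamma/\eta$ with probability $1-\eta/3$, by Markov. Integrating $t^{-s/2}$ over $|t|>T$, which converges because $s>2$, gives a tail term of order $\sqrt{\log(2N+1)/N}\,\sqrt{3\Gamma/\eta}\,\frac{1}{s-2}\,T^{1-s/2}\,\sqrt{\log(24/\eta)+d}$.
\item \emph{Choice of $T$.} Take $T=(\log(2N+1))^{1/s}$. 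Then $T=\sqrt{\log(2N+1)}\,T^{1-s/2}$, and this balancing is what yields the common factor $\log(2N+1)^{1/s}/\sqrt N$ in front of $c_{1/2}$.
\end{itemize}
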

\begin{proof}[Proof of Theorem  \ref{prop:finite-sample}.]
By Lemma \ref{lem:2}, it suffices to show that the expected quantile loss function $\rho(Z):=\E[\ell_\alpha(Z)]$ satisfies condition \eqref{eq:loss-condition} with $\lambda = (\alpha\vee(1-\alpha))/(\alpha\wedge(1-\alpha))$. While the result has been established in Example 1 of \citet{wu2022generalization}, we provide a complete proof here for the sake of self-containedness. Let $Z \in L^1, \varepsilon \geq 0$, we have
    \begin{align*}
        \sup _{|V| \leq \varepsilon}\E\left[\ell_\alpha(Z+V)\right] &\geq \E\left[\ell_\alpha(Z+\varepsilon\ \sgn(Z))\right] \\
        &= \E\left[\alpha(Z+\varepsilon\ \sgn(Z))_+ + (1-\alpha)(Z+\varepsilon\ \sgn(Z))_-\right] \\
        &= \E\left[\alpha Z_+ + (1-\alpha)Z_- + \varepsilon\left(\alpha\mathbbm{1}_{\{Z<0\}} + (1-\alpha)\mathbbm{1}_{\{Z>0\}}\right)\right] \\
        &\geq \E\left[\ell_\alpha(Z)\right] + \varepsilon(\alpha\wedge(1-\alpha)).
    \end{align*}
    On the other hand, by the Lipschitz continuity of $\ell_\alpha$, we have
    \begin{align*}
        \sup _{\mathbb{E}[|V|] \leq \varepsilon} \E\left[\ell_\alpha(Z+V)\right] &\leq \sup _{\mathbb{E}[|V|] \leq \varepsilon} \E\left[\ell_\alpha(Z)+|V|(\alpha\vee(1-\alpha))\right] = \E\left[\ell_\alpha(Z)\right] + \varepsilon(\alpha\vee(1-\alpha)).
    \end{align*}
    Combining the two inequalities above gives the desired result.
\end{proof}
\subsection{Proofs of Section \ref{sec:4}}
\subsubsection{Proof of Theorem~\ref{prop:finite-sample-fixed}.}
We need the following lemmas to prove Theorem~\ref{prop:finite-sample-fixed}. It is worth noting that the following three lemmas are based on the given  historical observations  $\{(\boldsymbol{x}_i,y_i)\}_{i\in [N]}$ which means that   $\widehat{\boldsymbol{\beta}}_{\rm ols}$
is known.
\begin{lemma} \label{lem:empirical distributions' gap}
    For $p\ge 1$, $\varepsilon>0$ and $\boldsymbol{\beta}\in\R^d$, let $\widehat{F}_N(\boldsymbol{\beta}):=\frac{1}{N}\sum_{i=1}^N \delta_{e_i}$
    with $e_i=y_i-\boldsymbol{\beta}^\top\boldsymbol{x}_i$, $i\in [N]$ and $\widehat{\boldsymbol{\beta}}_N$ be the minimizer of problem \eqref{eq:fixed-design}. We have
\begin{align}\label{eq:empirical distributions' gap}
    \sup_{F \in \mathbb B_p\left(\widehat{F}_N(\boldsymbol{\beta}),\,\varepsilon\right)} \E^{F} \left[\ell_\alpha({e}-\widehat{s}_N)\right] \leq \sup_{F \in \mathbb B_p\left(\widehat{F}_N^{\rm ols},\,\varepsilon\right)} \E^{F} \left[\ell_\alpha(\overline{e}-\widehat{s}_N)\right] +\frac{\Lip(\ell_\alpha)}{N}\sum_{i=1}^{N} \left|(\boldsymbol{\beta}-\betaols)^\top\boldsymbol{x}_i\right|.
\end{align}
\end{lemma}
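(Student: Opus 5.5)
We compare the two worst-case problems through an explicit coupling of their centers. Fix $\boldsymbol{\beta}$. Observe that $e_i=y_i-\boldsymbol{\beta}^\top\boldsymbol{x}_i = z_i + (\betaols-\boldsymbol{\beta})^\top\boldsymbol{x}_i$, where $z_i=y_i-\betaols^{\,\top}\boldsymbol{x}_i$ are the atoms of $\widehat{F}_N^{\rm ols}$. Thus $\widehat{F}_N(\boldsymbol{\beta})$ is obtained from $\widehat{F}_N^{\rm ols}$ by moving the $i$-th atom by the deterministic amount $\delta_i:=(\betaols-\boldsymbol{\beta})^\top\boldsymbol{x}_i$. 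The plan is to transfer every feasible perturbation of $\widehat{F}_N(\boldsymbol{\beta})$ to a feasible perturbation of $\widehat{F}_N^{\rm ols}$ that differs only by these shifts, and then to pay for the shifts through the Lipschitz constant of $\ell_\alpha$.

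Concretely, take any $F\in\mathbb B_p(\widehat{F}_N(\boldsymbol{\beta}),\varepsilon)$ together with a coupling attaining $W_p$ (or an $\eta$-optimal one). Let $I$ be uniform on $[N]$, and write $F$ as the law of $e_I+V$ with $\bigl(\E|V|^p\bigr)^{1/p}\le\varepsilon$ (respectively $\|V\|_\infty\le\varepsilon$ when $p=\infty$). Define $F'$ as the law of $z_I+V$. The same $V$ couples $F'$ with $\widehat{F}_N^{\rm ols}$ at cost at most $\varepsilon$, so $F'\in\mathbb B_p(\widehat{F}_N^{\rm ols},\varepsilon)$. Pointwise,
\[
\ell_\alpha(e_I+V-\widehat{s}_N)\le \ell_\alpha(z_I+V-\widehat{s}_N)+\Lip(\ell_\alpha)\,|\delta_I|,
\]
since the arguments differ by exactly $\delta_I$. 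Taking expectations gives $\E^F[\ell_\alpha(e-\widehat{s}_N)]\le \E^{F'}[\ell_\alpha(\overline{e}-\widehat{s}_N)]+\frac{\Lip(\ell_\alpha)}{N}\sum_i|\delta_i|$. The right-hand side is at most the supremum over $\mathbb B_p(\widehat{F}_N^{\rm ols},\varepsilon)$ plus the error term. Taking the supremum over $F$ then yields \eqref{eq:empirical distributions' gap}. Here $\widehat{s}_N$ is held fixed throughout, and $\E^{F'}[\ell_\alpha(\overline{e}-\widehat{s}_N)]$ is read as the expectation of the loss under $F'$ as the law of the residual variable.

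The only delicate point is the existence of the representation $e_I+V$ with the coupling indexed by atoms. This requires disintegrating an optimal (or near-optimal) coupling $\pi$ of $\widehat{F}_N(\boldsymbol{\beta})$ and $F$ along the $N$ atoms. We must treat coinciding atoms with care: split mass equally among indices sharing the same value $e_i$ so that each index carries weight $1/N$. Because the probability space is atomless, such $V$ exist. If the supremum is not attained, an $\eta$-optimal coupling followed by $\eta\downarrow0$ suffices.
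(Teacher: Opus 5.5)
Your proof is correct, but it takes a different route from the paper's.

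\textbf{The paper's route.} The paper uses a finite-reduction result. This applies because $\ell_\alpha=\max\{\alpha u,-(1-\alpha)u\}$ is a maximum of two linear pieces and the nominal distribution is discrete. Each worst-case problem is then replaced by a finite program in variables $(p_{ik},v_{ik})$, $i\in[N]$, $k=1,2$. The change of variables $v_{ik}\mapsto v_{ik}-(\boldsymbol{\beta}-\betaols)^\top\boldsymbol{x}_i$ moves the transport constraint from around $e_i$ to around $z_i$. The Lipschitz bound is applied term by term, and the same reduction identifies the bounded program with the right-hand side.

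\textbf{Your route.} You work directly with an arbitrary $F$ in the ball. You disintegrate a coupling along the atoms, carry the perturbation $V$ from the $e$-atoms to the $z$-atoms, and apply the Lipschitz bound pointwise.

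\textbf{Comparison.} The mechanism is the same in both: shift each atom's perturbation by $\delta_i$ and pay $\Lip(\ell_\alpha)|\delta_i|$. Your version is self-contained and uses nothing about $\ell_\alpha$ beyond Lipschitz continuity. It also treats $p=\infty$ on the same footing, whereas the paper's displayed program is written only for $p<\infty$ and relies on an external reduction theorem. In exchange, the paper's version stays finite-dimensional and avoids any disintegration of couplings.

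\textbf{A small remark.} Your fallback to an $\eta$-optimal coupling is unnecessary. An optimal coupling always exists here, since the cost is lower semicontinuous on $\R$ and one marginal is finitely supported. As stated, the fallback would only place $F'$ in a ball of slightly larger radius, which would then need a continuity-in-the-radius argument you do not give. Simply use the optimal coupling.
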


\begin{proof}
    Since $\ell_\alpha$ is a pointwise supremum of two concave (linear) functions, that is, $\ell_\alpha(u) = \max\{\alpha u, -(1-\alpha)u\}$, and the reference distribution $\widehat{F}_N(\boldsymbol{\beta})$ is discrete, the left-hand side of \eqref{eq:empirical distributions' gap} admits the following finite reduction in the sense that they have the same optimal value (see e.g. \citet{zhen2025unified}):
    \begin{equation}\label{left finite reduction}
        \begin{aligned}
                    \sup\:&\sum_{i=1}^{N} \sum_{k=1}^2 p_{ik}\ell_\alpha(v_{ik}) & \\
        \text{s.t. } &p_{i1}+p_{i2}=\frac{1}{N} & \forall i \in [N],\\
        &\sum_{i=1}^{N} \sum_{k=1}^2 p_{ik} |v_{ik}-e_i+\widehat{s}_N|^p \leq \varepsilon^p & \forall i \in [N],\\
        &p_{ik} \geq 0, v_{ik} \in \R, & \forall i \in [N], k = 1,2.
        \end{aligned}
    \end{equation}
    Note that $e_i=z_i-(\boldsymbol{\beta}-\betaols)^\top\boldsymbol{x}_i$. Substituting this and $v_{ik}$ by $v_{ik}-(\boldsymbol{\beta}-\betaols)^\top\boldsymbol{x}_i$, we have problem  \eqref{left finite reduction} is equivalent to
    \begin{equation}\label{transformed finite reduction}
        \begin{aligned}
            \sup\:&\sum_{i=1}^{N} \sum_{k=1}^2 p_{ik}\ell_\alpha\left(v_{ik}-(\boldsymbol{\beta}-\betaols)^\top\boldsymbol{x}_i\right) & \\
            \text{s.t. } &p_{i1}+p_{i2}=\frac{1}{N} & \forall i \in [N],\\
            &\sum_{i=1}^{N} \sum_{k=1}^2 p_{ik} |v_{ik}-z_i+\widehat{s}_N|^p \leq \varepsilon^p & \forall i \in [N],\\
            &p_{ik} \geq 0, v_{ik} \in \R, & \forall i \in [N], k = 1,2.
        \end{aligned}
    \end{equation}
    By the Lipschitz continuity of $\ell_\alpha$, problem \eqref{transformed finite reduction} is upper bounded by
        \begin{equation} \label{right finite reduction}
        \begin{aligned}
            \sup\:&\sum_{i=1}^{N} \sum_{k=1}^2 p_{ik}\ell_\alpha(v_{ik}) +\Lip(\ell_\alpha)\frac{1}{N}\sum_{i=1}^{N} \left|(\boldsymbol{\beta}-\betaols)^\top\boldsymbol{x}_i\right| & \\
            \text{s.t. } &p_{i1}+p_{i2}=\frac{1}{N} & \forall i \in [N],\\
            &\sum_{i=1}^{N} \sum_{k=1}^2 p_{ik} |v_{ik}-z_i+\widehat{s}_N|^p \leq \varepsilon^p & \forall i \in [N],\\
            &p_{ik} \geq 0, v_{ik} \in \R, & \forall i \in [N], k = 1,2.
        \end{aligned}
    \end{equation}
    By the same finite reduction property, problem  \eqref{right finite reduction} has the same optimal value as the right hand side of \eqref{eq:empirical distributions' gap}. This completes the proof.
\end{proof}

\begin{lemma}\label{lem:true distributions' gap}
For $p\ge 1$ and $\boldsymbol{c}\in\R^d$,    let $F^*_e$  denote the data generating distribution of $e$, and  $F^*_{\overline{e}}$ be the distribution of $\overline{e}:=e+(\boldsymbol{\beta}-\betaols)^\top\boldsymbol{c} $. Then, we have
    \begin{align*}
        \E^{F^*_{\overline{e}}}[\ell_\alpha(\overline{e}-\widehat{s}_N)] \leq \E^{F^*_e}[\ell_\alpha(e-\widehat{s}_N)]+\Lip(\ell_\alpha)\left|(\boldsymbol{\beta}-\betaols)^\top\boldsymbol{c}\right|.
    \end{align*}
\end{lemma}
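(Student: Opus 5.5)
The statement is Lemma~\ref{lem:true distributions' gap}. It compares the expected check loss under two random variables that differ by a deterministic shift. The shift is determined once the historical observations are fixed: $\betaols$ is then known and $\boldsymbol{c}$ is a fixed vector. The plan is therefore a pathwise Lipschitz bound followed by taking expectations.

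First, I would realize both distributions on a common probability space. Take $e\sim F^*_e$ and set $\overline{e}:=e+\delta$ with $\delta:=(\boldsymbol{\beta}-\betaols)^\top\boldsymbol{c}$. Conditionally on the observed data, $\delta$ is a constant, so $\overline{e}$ has law $F^*_{\overline{e}}$. Consequently $\E^{F^*_{\overline{e}}}[\ell_\alpha(\overline{e}-\widehat{s}_N)]=\E[\ell_\alpha(e+\delta-\widehat{s}_N)]$. The estimate $\widehat{s}_N$ is likewise a fixed number, since it is computed from the observed data through problem \eqref{eq:fixed-design}.

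Second, I would apply the Lipschitz property of the check loss pointwise. Since $\ell_\alpha(u)=\max\{\alpha u,-(1-\alpha)u\}$ is the maximum of two linear functions with slopes bounded in absolute value by $\alpha\vee(1-\alpha)=\Lip(\ell_\alpha)$, every $u,v\in\R$ satisfy
\begin{align*}
\ell_\alpha(u+v)\le \ell_\alpha(u)+\Lip(\ell_\alpha)|v|.
\end{align*}
Taking $u=e-\widehat{s}_N$ and $v=\delta$ gives the inequality almost surely. Taking expectations, which is legitimate because $e$ has a finite $s$-th moment with $s>2$ and so $\ell_\alpha(e-\widehat{s}_N)$ is integrable, yields
\begin{align*}
\E[\ell_\alpha(\overline{e}-\widehat{s}_N)]\le \E^{F^*_e}[\ell_\alpha(e-\widehat{s}_N)]+\Lip(\ell_\alpha)|\delta|.
\end{align*}
This is exactly the claim.

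There is no substantive obstacle here. The only point needing care is interpretive: the expectation must be taken conditionally on the data, so that $\betaols$, $\widehat{s}_N$, and hence $\delta$ are treated as constants, consistent with the remark preceding these lemmas. If one instead wanted an unconditional statement, one would keep the bound pathwise in the data and note that it holds for every realization. The parameter $p$ plays no role.
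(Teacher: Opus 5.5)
Your proposal is correct and matches the paper's proof: both rewrite $\E^{F^*_{\overline{e}}}[\ell_\alpha(\overline{e}-\widehat{s}_N)]$ as $\E^{F^*_e}[\ell_\alpha(e+(\boldsymbol{\beta}-\betaols)^\top\boldsymbol{c}-\widehat{s}_N)]$, treat the shift as fixed given the observed data, and apply the Lipschitz continuity of $\ell_\alpha$. The integrability remark is not needed. Since $\ell_\alpha\ge 0$, both expectations are well defined in $[0,\infty]$, and the inequality holds trivially if the right-hand side is infinite.
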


\begin{proof}
    By the definition of $\overline{e}$, we have
    \begin{align*}
        \E^{F^*_{\overline{e}}}[\ell_\alpha(\overline{e}-\widehat{s}_N)] &= \E^{F^*_{e}}[\ell_\alpha(e+(\boldsymbol{\beta}-\betaols)^\top\boldsymbol{c}-\widehat{s}_N)]\\
        &\leq \E^{F^*_e}[\ell_\alpha(e-\widehat{s}_N)]+\Lip(\ell_\alpha)\left|(\boldsymbol{\beta}-\betaols)^\top\boldsymbol{c}\right|.
    \end{align*}
    where the last inequality follows from the Lipschitz continuity of $\ell_\alpha$. This completes the proof.
\end{proof}

\begin{lemma}\label{lem:constant_radius}
    For   $F_0\in\mathcal M(\R)$ and a constant $A\geq 0$, let $\varepsilon_A = {A}/({\alpha\wedge(1-\alpha)})$. We have
    \begin{align*}
    \E^{F_0}[\ell_\alpha({e}-\widehat{s}_N)] + A \leq \sup_{F \in \mathbb B_p\left(F_0,\varepsilon_A\right)} \E^{F} \left[\ell_\alpha({e}-\widehat{s}_N)\right].
    \end{align*}
\end{lemma}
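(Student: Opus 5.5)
The plan is to exhibit an explicit feasible distribution in $\mathbb B_p(F_0,\varepsilon_A)$ whose expected loss is at least $\E^{F_0}[\ell_\alpha(e-\widehat{s}_N)]+A$. Since $\mathbb B_\infty(F_0,\varepsilon)\subseteq \mathbb B_p(F_0,\varepsilon)$ for every $p\ge1$ (because $W_p\le W_\infty$), it is enough to build a perturbation that moves every atom by at most $\varepsilon_A$. This reduces the claim to a pointwise inequality that does not depend on $p$.

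The construction follows the first step in the proof of Theorem~\ref{prop:finite-sample}. Let $e_0\sim F_0$ and write $Z=e_0-\widehat{s}_N$. Define the shifted variable $e^*=e_0+\varepsilon_A\,\sgn(Z)$ and let $F^*$ be its law. When $\sgn(Z)=0$, i.e.\ $Z=0$, move instead by $+\varepsilon_A$ (or by $-\varepsilon_A$); one should use $\mathbbm 1_{\{Z\ge0\}}-\mathbbm 1_{\{Z<0\}}$ so that no atom stays fixed. The coupling $(e_0,e^*)$ satisfies $|e^*-e_0|\le\varepsilon_A$ almost surely, which gives $W_p(F_0,F^*)\le W_\infty(F_0,F^*)\le\varepsilon_A$ and hence $F^*\in\mathbb B_p(F_0,\varepsilon_A)$.

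Next compute the loss. On $\{Z\ge0\}$ we have $\ell_\alpha(Z+\varepsilon_A)=\ell_\alpha(Z)+\alpha\varepsilon_A$. On $\{Z<0\}$ we have $\ell_\alpha(Z-\varepsilon_A)=\ell_\alpha(Z)+(1-\alpha)\varepsilon_A$. In both cases the increase is at least $(\alpha\wedge(1-\alpha))\varepsilon_A=A$. Taking expectations gives $\E^{F^*}[\ell_\alpha(e-\widehat{s}_N)]\ge \E^{F_0}[\ell_\alpha(e-\widehat{s}_N)]+A$, and bounding this by the supremum over the ball finishes the argument.

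The only step that needs care is the boundary event $\{Z=0\}$. This is why I use the sign convention $\mathbbm 1_{\{Z\ge0\}}-\mathbbm 1_{\{Z<0\}}$ rather than $\sgn$ as defined in the paper: with the paper's $\sgn$ those points would not move and would gain no loss. Integrability is not an issue: if $\E^{F_0}[\ell_\alpha]=\infty$ the inequality holds trivially, and otherwise the shift is bounded.
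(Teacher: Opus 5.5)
Your proof is correct and is essentially the paper's argument. The paper also pushes mass on $\{e\ge\widehat{s}_N\}$ upward and on $\{e<\widehat{s}_N\}$ downward and bounds $W_p$ through the pointwise size of the shift. The only difference is calibration: the paper uses the asymmetric shifts $A/\alpha$ and $-A/(1-\alpha)$, each of magnitude at most $\varepsilon_A$, so that the loss rises by exactly $A$ on both events, whereas your symmetric shift $\pm\varepsilon_A$ gives an increase of at least $A$.
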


\begin{proof}
    Given $F_0\in\mathcal M(\R)$, let $e\sim F_0$ and define
    \begin{align*}
        e^\prime=e+\alpha^{-1}A\mathbbm{1}_{\{e\geq \widehat{s}_N\}}-(1-\alpha)^{-1}A\mathbbm{1}_{\{e< \widehat{s}_N\}}.
    \end{align*}
    Denote by $F^\prime$ the distribution of $e^\prime$. Then
    \begin{align*}
        \E^{F^\prime}[\ell_\alpha({e^\prime}-\widehat{s}_N)] &= \E^{F^\prime}[\alpha({e^\prime}-\widehat{s}_N)_+ + (1-\alpha)({e^\prime}-\widehat{s}_N)_-] \\
        &= \E^{F_0}[\alpha({e}-\widehat{s}_N)_+ + (1-\alpha)({e}-\widehat{s}_N)_-] +A\left(\P(e\geq \widehat{s}_N)+\P(e< \widehat{s}_N)\right) \\
        &= \E^{F_0}[\ell_\alpha({e}-\widehat{s}_N)] + A.
    \end{align*}
  Moreover, note that the Wasserstein distance between $F_0$ and $F^\prime$ is bounded by
    \begin{align*}
        W_p(F_0,F^\prime) \leq \left(\E\left[|e-e^\prime|^p\right]\right)^{1/p}\leq \frac{A}{\alpha\wedge(1-\alpha)}=\varepsilon_A.
    \end{align*}
    This completes the proof.
\end{proof}

\begin{lemma}[\cite{cardinali2013observation}] \label{lem:projection_matrix_properties}
Let $h_{i i}$ denote the $i$th diagonal element of the projection matrix $\boldsymbol{H}:=\boldsymbol{X}(\boldsymbol{X}^\top\boldsymbol{X})^{-1}\boldsymbol{X}^\top$. Then, the following results hold:
\begin{enumerate}
    \item [(i)]The diagonal elements satisfy: $0 \leq h_{i i} \leq 1$, $i\in [N]$.
    \item [(ii)]The trace of $\boldsymbol{H}$ equals to the rank of the design matrix, i.e., $\operatorname{trace}(\boldsymbol{H})=\sum_{i=1}^N h_{i i}=d$.
\end{enumerate}
\end{lemma}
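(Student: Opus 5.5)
The plan is to derive both items from two algebraic facts about $\boldsymbol{H}$, namely that it is symmetric and idempotent. These facts only require $\boldsymbol{X}^\top\boldsymbol{X}$ to be invertible. That is guaranteed by Assumption~\ref{asp-1}(i): since $\boldsymbol{X}\in\R^{N\times d}$ has full rank $d$ (so $N\ge d$), the matrix $\boldsymbol{X}^\top\boldsymbol{X}$ is positive definite.

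The first step is to verify the two facts.
\begin{itemize}
\item Symmetry: $\boldsymbol{H}^\top=\boldsymbol{X}\big((\boldsymbol{X}^\top\boldsymbol{X})^{-1}\big)^\top\boldsymbol{X}^\top=\boldsymbol{H}$, because the inverse of a symmetric matrix is symmetric.
\item Idempotence: $\boldsymbol{H}^2=\boldsymbol{X}(\boldsymbol{X}^\top\boldsymbol{X})^{-1}(\boldsymbol{X}^\top\boldsymbol{X})(\boldsymbol{X}^\top\boldsymbol{X})^{-1}\boldsymbol{X}^\top=\boldsymbol{H}$.
\end{itemize}

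For item (i), I would write the $i$th diagonal entry in two ways. Let $\mathbf e_i$ denote the $i$th standard basis vector. Then
\[
h_{ii}=\mathbf e_i^\top\boldsymbol{H}\mathbf e_i=\mathbf e_i^\top\boldsymbol{H}^\top\boldsymbol{H}\mathbf e_i=\|\boldsymbol{H}\mathbf e_i\|_2^2\ge 0 .
\]
Expanding the same identity $\boldsymbol{H}=\boldsymbol{H}^2=\boldsymbol{H}^\top\boldsymbol{H}$ entrywise gives
\[
h_{ii}=\sum_{j=1}^N h_{ij}^2=h_{ii}^2+\sum_{j\neq i}h_{ij}^2\ge h_{ii}^2 .
\]
Since $h_{ii}\ge h_{ii}^2$ and $h_{ii}\ge 0$, it follows that $h_{ii}\le 1$.

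For item (ii), I would use the cyclic property of the trace:
\[
\operatorname{trace}(\boldsymbol{H})=\operatorname{trace}\big((\boldsymbol{X}^\top\boldsymbol{X})^{-1}\boldsymbol{X}^\top\boldsymbol{X}\big)=\operatorname{trace}(\boldsymbol{I}_d)=d .
\]
This equals the rank of $\boldsymbol{X}$ by the full-rank assumption. An alternative argument is that an idempotent matrix has eigenvalues in $\{0,1\}$, so its trace equals its rank, and $\operatorname{rank}(\boldsymbol{H})=\operatorname{rank}(\boldsymbol{X})=d$.

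I do not expect a genuine obstacle here. The only point needing care is to state explicitly that the invertibility of $\boldsymbol{X}^\top\boldsymbol{X}$ comes from Assumption~\ref{asp-1}(i), since every step above depends on it.
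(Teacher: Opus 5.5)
Your proof is correct. The paper does not prove this lemma; it only cites \cite{cardinali2013observation}, so there is no argument in the paper to compare against. What you give is the standard self-contained derivation of the cited fact. Symmetry and idempotence give $h_{ii}=\sum_{j}h_{ij}^2\ge h_{ii}^2$, hence $0\le h_{ii}\le 1$. The cyclic property of the trace gives $\operatorname{trace}(\boldsymbol{H})=\operatorname{trace}(\boldsymbol{I}_d)=d$.

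Your version also makes explicit something the paper leaves implicit: ``full rank'' in Assumption 1(i) must mean full column rank, so that $N\ge d$ and $\boldsymbol{X}^\top\boldsymbol{X}$ is invertible. This is needed for $\boldsymbol{H}$ and $\widehat{\boldsymbol{\beta}}_{\rm ols}$ to be defined at all.
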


Now we are ready to prove Theorem~\ref{prop:finite-sample-fixed}.
\begin{proof}[Proof of Theorem~\ref{prop:finite-sample-fixed}]
Note that $F^*_e$ and $\widehat{F}_N(\boldsymbol{\beta})$ denote the data generating distribution and the empirical distribution of $e$, respectively, and that $e$ satisfies the moment condition of Lemma~\ref{lem:2}. It follows that for any $\eta^{\prime}\in(0,1)$
\begin{align*}
    \P\left(\E^{F^*_{e}}\left[\ell_\alpha({e}-s)\right] \leq \sup_{F \in \mathbb B_p\left(\widehat{F}_N(\boldsymbol{\beta}),\,\varepsilon_{1,N}\right)} \E^{F} \left[\ell_\alpha({e}-s)\right],~\forall~s\in\R\right)\geq 1-\eta^{\prime},
\end{align*}
and thus,
\begin{align} \label{eq:proof-thm3-1}
    \P\left(\E^{F^*_{e}}\left[\ell_\alpha({e}-\widehat{s}_N)\right] \leq \sup_{F \in \mathbb B_p\left(\widehat{F}_N(\boldsymbol{\beta}),\,\varepsilon_{1,N}\right)} \E^{F} \left[\ell_\alpha({e}-\widehat{s}_N)\right]\right) \geq 1-\eta^{\prime},
\end{align}
where $\varepsilon_{1,N}:= \varepsilon_N(\eta^\prime) =  c_\alpha  \log(2N+1)^{{1}/{s}}/{\sqrt{N}} $ and $c_\alpha $ is defined by \eqref{eq:c-alpha} with $\Gamma$ replaced by $\Gamma_0$. Applying Lemma \ref{lem:empirical distributions' gap} and Lemma \ref{lem:true distributions' gap}  to the right hand side and left-hand side of the inequality of the parentheses of \eqref{eq:proof-thm3-1}, respectively, yields that

\begin{align}\label{eq:proof-thm3-2}
    \P\left(\E^{F^*_{\overline{e}}}[\ell_\alpha(\overline{e}-\widehat{s}_N)] \leq \sup_{F \in \mathbb B_p\left(\widehat{F}_N^{\rm ols},\,\varepsilon_{1,N}\right)} \E^{F} \left[\ell_\alpha(\overline{e}-\widehat{s}_N)\right]+A\right) \geq 1-\eta^{\prime},
\end{align}
with $A:=\Lip(\ell_\alpha)\left[\frac{1}{N}\sum_{i=1}^{N} \left|(\boldsymbol{\beta}-\betaols)^\top\boldsymbol{x}_i\right|+\left|(\boldsymbol{\beta}-\betaols)^\top\boldsymbol{c}\right|\right]$. Then by Lemma~\ref{lem:constant_radius}, we have that for any $F \in \mathbb B_p (\widehat{F}_N^{\rm ols},\,\varepsilon_{1,N} )$,
it holds that
\begin{align*}
 \E^{F} \left[\ell_\alpha(\overline{e}-\widehat{s}_N)\right]+A  & \le \sup_{G\in \mathbb B_p\left(F,\,\varepsilon_A\right)}\E^{F} \left[\ell_\alpha(\overline{e}-\widehat{s}_N)\right]\\
 &\le\sup_{G\in \mathbb B_p\left(\widehat{F}_N^{\rm ols},\,\varepsilon_{1,N}+\varepsilon_A\right)}\E^{G} \left[\ell_\alpha(\overline{e}-\widehat{s}_N)\right],
\end{align*}
where
\begin{align*}
    \varepsilon_A :=\frac{A}{\alpha\wedge(1-\alpha)}= \frac{\alpha\vee(1-\alpha)}{\alpha\wedge(1-\alpha)}\left(\frac{1}{N}\sum_{i=1}^{N} \left|(\boldsymbol{\beta}-\betaols)^\top\boldsymbol{x}_i\right|+\left|(\boldsymbol{\beta}-\betaols)^\top\boldsymbol{c}\right|\right),
\end{align*}
and the second inequality follows from $W_p(\widehat{F}_N^{\rm ols},G)\le \varepsilon_{1,N}+\varepsilon_A$ whenever $F \in \mathbb B_p(\widehat{F}_N^{\rm ols},\,\varepsilon_{1,N} )$ and $G\in \mathbb B_p\left(F,\,\varepsilon_A\right)$. It then follows that  $$\sup_{F \in \mathbb B_p\left(\widehat{F}_N^{\rm ols},\,\varepsilon_{1,N}\right)} \E^{F} \left[\ell_\alpha(\overline{e}-\widehat{s}_N)\right]+A\le \sup_{G\in \mathbb B_p\left(\widehat{F}_N^{\rm ols},\,\varepsilon_{1,N}+\varepsilon_A\right)}\E^{G} \left[\ell_\alpha(\overline{e}-\widehat{s}_N)\right].$$ Substituting this into \eqref{eq:proof-thm3-2} yields that
\begin{align}\label{eq:proof-thm3-6}
    \P\left(\E^{F^*_{\overline{e}}}[\ell_\alpha(\overline{e}-\widehat{s}_N)] \leq \sup_{F \in \mathbb B_p\left(\widehat{F}_N^{\rm ols},\,\varepsilon_{1,N}+\varepsilon_A\right)} \E^{F} \left[\ell_\alpha(\overline{e}-\widehat{s}_N)\right]\right) \geq 1-\eta^{\prime}.
\end{align}
It remains to show that $\varepsilon_A$ is bounded by $\epsilon_{1,N}+\epsilon_{2,N}$ with probability at least $1-2\eta^{\prime}$ and then the result holds by taking $\eta=3\eta'$. This can be proved by the same spirit  as in the proofs of Lemma 6 and Lemma 7 in \citet{qi2021robust}. Here we provide a complete proof here using our notations for the sake of self-containedness.

For the first term, let $V_i=(\boldsymbol{\beta}-\betaols)^\top\boldsymbol{x}_i=\boldsymbol{x}_i^\top(\boldsymbol{X}^\top\boldsymbol{X})^{-1}\boldsymbol{X}^\top{\bf e}$, $i\in [N]$ with the vector ${\bf e}=(e_1,\ldots,e_N)$ and $e_i=y_i-\boldsymbol{\beta}^\top \boldsymbol{x_i}$, $i\in [N]$. Since $\boldsymbol{X}$ is fixed, we have
\begin{align*}
\E[V_i^2]&=\boldsymbol{x}_i^\top(\boldsymbol{X}^\top\boldsymbol{X})^{-1}\boldsymbol{X}^\top\E[{\bf e}{\bf e}^\top]\boldsymbol{X}(\boldsymbol{X}^\top\boldsymbol{X})^{-1}\boldsymbol{x}_i\\ &=\E[e^2]\boldsymbol{x}_i^\top(\boldsymbol{X}^\top\boldsymbol{X})^{-1}\boldsymbol{x}_i\\
    &\leq h_{ii}\Gamma_0^{2/s},
\end{align*}
where the second equality is due to the independence and zero-mean of $e_i$, and the last inequality follows from Jensen's inequality. It follows that for any $\varepsilon>0$,
\begin{align}
\mathbb{P}\left(\frac{1}{N} \sum_{i=1}^N\left|V_i\right|>\varepsilon\right) & \leq \frac{\mathbb{E}\left[\frac{1}{N} \sum_{i=1}^N\left|V_i\right|\right]}{\varepsilon} \notag \\
& \leq \frac{\frac{1}{N} \sum_{i=1}^N \sqrt{\E V_i^2}}{\varepsilon} \notag \\
& \leq \frac{\sqrt{\frac{1}{N} \sum_{i=1}^N \E V_i^2}}{\varepsilon} \notag \\
& \leq \frac{\sqrt{\frac{1}{N} \sum_{i=1}^N h_{i i}\Gamma_0^{2/s}}}{\varepsilon} \notag \\
& \leq \sqrt{\frac{d\Gamma_0^{2/s}}{N}} \frac{1}{\varepsilon}, \label{eq:proof-thm3-3}
\end{align}
where the first inequality follows from Markov's inequality, the second and third inequalities are due to Jensen's inequality, and the last
inequality follows from Lemma~\ref{lem:projection_matrix_properties}.

For the second term in $\varepsilon_A$, similarly we have
\begin{align*}
    \E\left[\left|(\boldsymbol{\beta}-\betaols)^\top\boldsymbol{c}\right|^2\right] &= \E\left[\left(\boldsymbol{c}^\top(\boldsymbol{X}^\top\boldsymbol{X})^{-1}\boldsymbol{X}^\top{\bf e}\right)^2\right] \\
    &= \E[e^2]\boldsymbol{c}^\top(\boldsymbol{X}^\top\boldsymbol{X})^{-1}\boldsymbol{c} \\
    &\leq \Gamma_0^{2/s}\boldsymbol{c}^\top(\boldsymbol{X}^\top\boldsymbol{X})^{-1}\boldsymbol{c}.
\end{align*}
Thus, by Chebyshev's inequality, we have for any $\varepsilon>0$,
\begin{align}
    \mathbb{P}\left(\left|(\boldsymbol{\beta}-\betaols)^\top\boldsymbol{c}\right|>\varepsilon\right) & \leq \frac{\mathbb{E}\left[\left|(\boldsymbol{\beta}-\betaols)^\top\boldsymbol{c}\right|^2\right]}{\varepsilon^2}\leq\frac{\Gamma_0^{2/s}\boldsymbol{c}^\top(\boldsymbol{X}^\top\boldsymbol{X})^{-1}\boldsymbol{c}}{\varepsilon^2}. \label{eq:proof-thm3-4}
\end{align}
Now it is sufficient to choose
\begin{align*}
    \varepsilon_{2,N}=\frac{1}{\eta^\prime}\sqrt{\frac{d\Gamma_0^{2/s}}{N}} \quad\text{and}\quad \varepsilon_{3,N}=\sqrt{\frac{\Gamma_0^{2/s}\boldsymbol{c}^\top(\boldsymbol{X}^\top\boldsymbol{X})^{-1}\boldsymbol{c}}{\eta^\prime}},
\end{align*}
so that
\begin{align}\label{eq:proof-thm3-5}
    \P\left(\varepsilon_A\leq \frac{\alpha\vee(1-\alpha)}{\alpha\wedge(1-\alpha)}(\varepsilon_{2,N}+\varepsilon_{3,N})\right)
    & =1-\P\left(   \frac{1}{N} \sum_{i=1}^N\left|V_i\right| + \left|(\boldsymbol{\beta}-\betaols)^\top\boldsymbol{c}\right|  > \varepsilon_{2,N}+\varepsilon_{3,N} \right) \notag\\
    &\geq 1-\P\left(\frac{1}{N} \sum_{i=1}^N\left|V_i\right|>\varepsilon_{2, N}\right)-\P\left(\left|(\boldsymbol{\beta}-\betaols)^\top\boldsymbol{c}\right|>\varepsilon_{3, N}\right)\notag\\
    &\geq 1-\sqrt{\frac{d\Gamma_0^{2/s}}{N}} \frac{1}{\varepsilon_{2, N}}-\frac{\Gamma_0^{2/s}\boldsymbol{c}^\top(\boldsymbol{X}^\top\boldsymbol{X})^{-1}\boldsymbol{c}}{\varepsilon_{3, N}^2}\notag\\
    &\geq 1-2\eta^\prime,
\end{align}
where the second and third inequalities follow from \eqref{eq:proof-thm3-3} and \eqref{eq:proof-thm3-4}, respectively.
Let $N\to\infty$, it is straightforward to verify that $\varepsilon_{2,N}\to 0$, and $\varepsilon_{3,N} = O(1/\sqrt{N})$ as  $N\to\infty$ because
\begin{align*}
  \boldsymbol{c}^\top\left(\boldsymbol{X}^\top \boldsymbol{X}\right)^{-1} \boldsymbol{c}  =   \frac{ \boldsymbol{c}^\top\left(N^{-1} \boldsymbol{X}^\top \boldsymbol{X}\right)^{-1}  \boldsymbol{c}}{N} \approx \frac{ \boldsymbol{c}^\top\Sigma_0^{-1}  \boldsymbol{c}}{N}  ,
\end{align*}
where $\Sigma_0$ is the limit of  $\boldsymbol{X}^\top \boldsymbol{X}/N$ as $X\to\infty$  and the last equality follows from assumption (ii).

Finally, for any $\eta\in(0,1)$, by choosing $\eta^\prime=\eta/3$, we have
\begin{align*}
    \P\left(J_{\rm oos} \leq \widehat{J}_N\right)
    &=1-\P\left(\E^{F^*_{\overline{e}}}[\ell_\alpha(\overline{e}-\widehat{s}_N)] > \sup_{F \in \mathbb B_p\left(\widehat{F}_N^{\rm ols},\,\varepsilon_N\right)} \E^{F} \left[\ell_\alpha({e}-\widehat{s}_N)\right]\right) \\
      &\ge 1-\P\left(
      \begin{array}{c}
           \E^{F^*_{\overline{e}}}[\ell_\alpha(\overline{e}-\widehat{s}_N)] > \sup_{F \in \mathbb B_p\left(\widehat{F}_N^{\rm ols},\,\varepsilon_{1,N}+\varepsilon_A\right)} \E^{F} \left[\ell_\alpha({e}-\widehat{s}_N)\right] \\[3pt]
           ~{\rm or}~\varepsilon_A>  \frac{\alpha\vee(1-\alpha)}{\alpha\wedge(1-\alpha)}(\varepsilon_{2,N}+\varepsilon_{3,N})
      \end{array}
      \right)\\
    &\geq 1-\P\left(\E^{F^*_{\overline{e}}}[\ell_\alpha(\overline{e}-\widehat{s}_N)] > \sup_{F \in \mathbb B_p\left(\widehat{F}_N^{\rm ols},\,\varepsilon_{1,N}+\varepsilon_A\right)} \E^{F} \left[\ell_\alpha({e}-\widehat{s}_N)\right]\right)\\
    &\qquad - \P\left(\varepsilon_A> \frac{\alpha\vee(1-\alpha)}{\alpha\wedge(1-\alpha)}(\varepsilon_{2,N}+\varepsilon_{3,N})\right)\\
    &\geq 1-3\eta^\prime = 1-\eta,
\end{align*}
where the first inequality follows from that  $\varepsilon_{N}\ge \varepsilon_{1,N}+\varepsilon_A$
when $\varepsilon_A \le \frac{\alpha\vee(1-\alpha)}{\alpha\wedge(1-\alpha)}(\varepsilon_{2,N}+\varepsilon_{3,N})$, and thus, $$\sup_{F \in \mathbb B_p\left(\widehat{F}_N^{\rm ols},\,\varepsilon_{N}\right)} \E^{F} \left[\ell_\alpha({e}-\widehat{s}_N)\right]\ge \sup_{F \in \mathbb B_p\left(\widehat{F}_N^{\rm ols},\,\varepsilon_{1,N}+\varepsilon_A\right)} \E^{F} \left[\ell_\alpha({e}-\widehat{s}_N)\right]; $$
and the third inequality follows from \eqref{eq:proof-thm3-6} and \eqref{eq:proof-thm3-5}.
This completes the proof.
\end{proof}
\end{appendices}

\end{document}